%
%
%

\documentclass[11pt]{amsart}

%
%

\usepackage{mathrsfs} 
\usepackage{mathtools}
\usepackage{a4wide} 
\usepackage{graphicx} 
\usepackage{amssymb} 
\usepackage{epstopdf}
\usepackage{enumerate} 
\usepackage{titletoc} 
\usepackage[colorlinks=true, urlcolor=blue, linkcolor=blue, citecolor=black]{hyperref} %
\usepackage[nameinlink]{cleveref} 
\usepackage{esint} 
\usepackage{verbatim}
\usepackage{enumitem}

\numberwithin{equation}{section}

\theoremstyle{plain}
\newtheorem{theorem}{Theorem}[section]
\newtheorem{lemma}[theorem]{Lemma}
\newtheorem{corollary}[theorem]{Corollary}

\theoremstyle{definition}
\newtheorem{definition}[theorem]{Definition}
\newtheorem{remark}[theorem]{Remark}

%
%

\newcommand{\ssubset}{\subset\joinrel\subset}

%
%

\makeatletter
\@namedef{subjclassname@2020}{\textup{2020} Mathematics Subject Classification}
\makeatother

%
%

\title[Fully nonlinear degenerate/singular parabolic equations]{Boundary Regularity for viscosity solutions of Fully nonlinear degenerate/singular parabolic equations}

\author{Ki-Ahm Lee}
\address{Department of Mathematical Sciences and Research Institute of Mathematics,
	Seoul National University, Seoul 08826, Republic of Korea.}
\email{kiahm@snu.ac.kr}

\author{Hyungsung Yun}
\address{Department of Mathematical Sciences, Seoul National University, Seoul 08826, Republic of Korea}
\email{euler@snu.ac.kr}

\subjclass[2020]{Primary 35B65; Secondary 35B51, 35D40, 35K55, 35K65, 35K67}

\keywords{Viscosity solution; Boundary regularity; Porous medium equation; Degenerate equations; Singular equations}
\thanks{Ki-Ahm Lee is supported by the National Research Foundation of Korea (NRF) grant: NRF2021R1A4A1027378. Ki-Ahm Lee also holds a joint appointment with the Research Institute of Mathematics of Seoul National University.}

%
%

\begin{document}

\begin{abstract}
In this paper, we establish the boundary regularity results for viscosity solutions of fully nonlinear degenerate/singular parabolic equations of the form
$$u_t - x_n^{\gamma} F(D^2 u,x,t) = f,$$
where $\gamma<1$. These equations are motivated by the porous media type equations. We show the boundary $C^{1,\alpha}$-regularity of functions in their solutions class and the boundary $C^{2,\alpha}$-regularity of solutions. 
As an application, we derive the global regularity results and the solvability of the Cauchy-Dirichlet problems. 
\end{abstract}

\maketitle

%
%

\section{Introduction}
In this paper, we study regularity results for viscosity solutions of the following fully nonlinear degenerate/singular parabolic equations
\begin{equation} \label{eq:model}
	u_t - x_n^{\gamma} F(D^2 u,x,t) = f,
\end{equation}
where $\gamma < 1$ and $f$ is a bounded continuous function. Here the fully nonlinear operator F is uniformly parabolic with certain structural conditions (the hypotheses on F will be precisely stated in \Cref{sec:pre}). 

The fully nonlinear parabolic equations \eqref{eq:model} are motivated by the porous media type equations 
\begin{equation} \label{eq:FPME} 
	\left\{\begin{aligned}
		u_t - F(D^2 u^m)&=0 && \text{in } \Omega \times (0,\infty) \\
		u &= u_0 && \text{in } \Omega \times \{t=0\}  \\
		u &= 0 && \text{on } \partial \Omega \times [0,\infty),
	\end{aligned}\right.
\end{equation}
where $\Omega\subset\mathbb{R}^n$ is a smooth bounded domain and $u_0>0$ in $\Omega$. By setting $v=u^m$, the Cauchy-Dirichlet problems \eqref{eq:FPME} can be transformed into the following form
\begin{equation} \label{eq:FPME_tr}
	\left\{\begin{aligned}
		v_t - v^{\gamma} \widetilde{F}(D^2 v)&=0 && \text{in } \Omega \times (0,\infty) \\
		v &= u_0^m && \text{in } \Omega \times \{t=0\}  \\
		v &= 0 && \text{on } \partial \Omega \times [0,\infty),
	\end{aligned}\right.
\end{equation}
where $\gamma = 1 -1/m \in (-\infty, 1)$ and $\widetilde{F}(M)=m F(M)$; we refer to \cite[Section 5]{KL13}. In their work \cite{DH98,KL09}, it is demonstrated that \eqref{eq:model} can be derived by applying the hodograph transform to \eqref{eq:FPME_tr}. Moreover, they investigate the regularity results for solutions and free boundary of the porous medium equation by analyzing the regularity for solutions of \eqref{eq:model}. Besides the porous medium equation, degenerate/singular equations on flat boundaries appear in various nonlinear problems with applications; for example, Monge--Ampère equations, Gauss curvature flow, extension problems related to non-local equations, and mathematical finance, etc; see e.g. \cite{CS07,DH99,DS09,KLY24,KLY23,LS17}.

Our main results are to prove the boundary $C^{1,\alpha}$-regularity and the boundary $C^{2,\alpha}$-regularity for viscosity solutions. Unlike the interior $C^{1,\alpha}$-regularity, the boundary $C^{1,\alpha}$-regularity does not require the regularity of the operator in the variables $x$ and $t$, so it can be proved for functions in the solutions class $S^*(\lambda,\Lambda, f)$ suggested in \cite{CC95}. In detail, the solutions class $S^*(\lambda,\Lambda, f)$ is the set of functions that satisfy the following inequalities
\begin{equation}\label{eq:main}
	\left\{\begin{aligned}
		u_t - x_n^{\gamma}\mathcal{M}^{-}_{\lambda, \Lambda}(D^2u) & \ge- \|f\|_{L^{\infty}(Q_1^+)} \\
		u_t - x_n^{\gamma}\mathcal{M}^{+}_{\lambda, \Lambda}(D^2u)  & \leq \|f\|_{L^{\infty}(Q_1^+)} 
	\end{aligned}\right.
	\quad \text{in $Q_1^+$},
\end{equation}
where $\gamma < 1$, $f \in C(Q_1^+) \cap L^{\infty}(Q_1^+)$, and $\mathcal{M}_{\lambda, \Lambda}^{\pm}$ are the Pucci's extremal operators. (The operators $\mathcal{M}_{\lambda, \Lambda}^{\pm}$ will be precisely defined in \Cref{def_pucci}.)

A standard method to establish the boundary $C^{1,\alpha}$-regularity for functions in the solutions class of uniformly parabolic equations has been presented; see \cite{LZ22,Wan92b} for details. This method utilizes Lipschitz estimate and Hopf principle to sandwich the solution $u$ between the two linear functions and applies it repeatedly through scaling to find a linear function that approximates $u$. Moreover, the boundary $C^{1,\alpha}$-regularity has been investigated for uniformly elliptic equations by means of this method, as discussed in \cite{LZ20,SS14}. On the other hand, the interior regularity for functions in the solutions class is at most $C^{\alpha}$ in general; we refer to \cite{NV07, NV08, NV10}. In fact, since the approximation lemma is used in the proof for the interior $C^{1,\alpha}$-regularity, the regularity of $F$ in the variables $x$ and $t$ is required, so solutions for the given $F$ rather than functions in solutions class is considered; we refer to \cite{CC95,Wan92b}. Nonetheless, there is an interior $C^{1,\alpha}$-regularity result for functions in the solutions class under stringent conditions. In \cite{LY23}, they proved that for a given $\alpha \in (0,1)$, there exists $\delta>0$ depending only on $n$ and $\alpha$ such that if $\Lambda \leq (1+\delta)\lambda$, then functions in the solutions class becomes $C^{1,\alpha}$ at the interior point.

Our first main theorem is concerned with the boundary $C^{1, \alpha}$-regularity for viscosity solutions $u$ of \eqref{eq:main}. The constant $\overline{\alpha} \in (0,1)$ used in describing theorems throughout this paper is a universal constant derived from \Cref{lem:c1a_f0_g0}.
\begin{theorem}[Boundary $C^{1, \alpha}$-estimate] \label{thm:main}
	Let $\alpha \in(0, \overline{\alpha})$ with $\alpha \leq 1-\gamma$ and $(x_0,t_0) \in \partial_p Q_1^+ \cap \{x_n=0\}$. Suppose that $f\in C(Q_1^+) \cap L^{\infty}(Q_1^+)$, $g\in C^{1,\alpha}(x_0,t_0)$, and $u \in C(\overline{Q_1^+})$ satisfy \eqref{eq:main} with $u = g$ on $\partial_p Q_1^+ \cap \{x_n=0\}$. Then $u \in C^{1, \alpha}(x_0,t_0)$, i.e., there exists a linear function $L$ such that
	\begin{align*}
		|u(x,t)-L(x)|\leq C(\|u\|_{L^{\infty}(Q_1^+)}+\|f\|_{L^{\infty}(Q_1^+)}+ \|g\|_{C^{1,\alpha}(x_0,t_0)})(|x-x_0|+|t-t_0|^{\frac{1}{2-\gamma}})^{1+\alpha} 
	\end{align*}
	for all $(x,t) \in Q_1^+ \cap Q_1^+(x_0,t_0)$ and 
	\begin{equation*}
		|Du(x_0,t_0)| \leq C(\|u\|_{L^{\infty}(Q_1^+)}+\|f\|_{L^{\infty}(Q_1^+)}+ \|g\|_{C^{1,\alpha}(x_0,t_0)}),
	\end{equation*}
	where $C>0$ is a constant depending only on $n$, $\lambda$, $\Lambda$, $\gamma$, and $\alpha$.
\end{theorem}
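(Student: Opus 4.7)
The plan is to prove this boundary $C^{1,\alpha}$-estimate by an iterative flatness-improvement scheme at $(x_0,t_0)$, reducing the inhomogeneous, nonzero boundary data case to the homogeneous setting already handled by \Cref{lem:c1a_f0_g0}. Throughout I use the parabolic scale $d(x,t):=|x-x_0|+|t-t_0|^{1/(2-\gamma)}$ compatible with \eqref{eq:main}, which is the unique scale preserving $u_t-x_n^{\gamma}F(D^2u)$. After translating $(x_0,t_0)$ to $(0,0)$ and dividing through by $M:=\|u\|_{L^{\infty}(Q_1^+)}+\|f\|_{L^{\infty}(Q_1^+)}+\|g\|_{C^{1,\alpha}(0,0)}$, I may assume $M=1$.

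First, to strip off the tangential/temporal part of $g$, I would choose the linear function $L_g(x'):=g(0,0)+\nabla_{x'}g(0,0)\cdot x'$; recall that the final $L$ in the statement depends only on $x$, so no time term is included. The $C^{1,\alpha}$ hypothesis on $g$ gives $|g-L_g|(x',t)\le(|x'|+|t|^{1/(2-\gamma)})^{1+\alpha}$ near the origin. Since $D^2L_g=0$, $\partial_t L_g=0$, and $\mathcal{M}^{\pm}_{\lambda,\Lambda}(0)=0$, the function $v:=u-L_g$ remains in the same solutions class $S^*(\lambda,\Lambda,f)$, and its boundary trace $g-L_g$ vanishes to order $1+\alpha$ at the origin.

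The core induction is to produce a universal $\rho\in(0,1/2)$ and a sequence $\{a_k\}_{k\ge 0}\subset\R$ with $a_0=0$ such that, for every $k\ge 0$,
\begin{equation*}
\sup_{Q_{\rho^k}^+}|v(x,t)-a_k x_n|\le\rho^{k(1+\alpha)}\quad\text{and}\quad|a_{k+1}-a_k|\le C\rho^{k\alpha}.
\end{equation*}
Once this is established, $\{a_k\}$ is Cauchy with universally bounded limit $a_\infty$, and the pointwise $C^{1,\alpha}$-bound follows by the standard dyadic interpolation between scales $\rho^k\le d<\rho^{k-1}$, with $L(x):=L_g(x')+a_\infty x_n$. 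The inductive step relies on the rescaling $w(x,t):=\rho^{-k(1+\alpha)}\bigl(v(\rho^k x,\rho^{k(2-\gamma)}t)-a_k\rho^k x_n\bigr)$; the time dilation $\rho^{k(2-\gamma)}$ is precisely the one that keeps the equation form-invariant, and it converts $v\in S^*(\lambda,\Lambda,f)$ into $w\in S^*(\lambda,\Lambda,\tilde f_k)$ with $\|\tilde f_k\|_{\infty}\le\rho^{k(1-\gamma-\alpha)}\|f\|_{\infty}$. The hypothesis $\alpha\le 1-\gamma$ keeps this source uniformly bounded in $k$, and the rescaled boundary datum on $Q_1\cap\{x_n=0\}$ is bounded by a universal constant inherited from the $(1+\alpha)$-order vanishing of $g-L_g$. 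A perturbation version of \Cref{lem:c1a_f0_g0} applied to $w$ then yields a slope $b$ with $\sup_{Q_\rho^+}|w-bx_n|\le C\rho^{1+\overline{\alpha}}$, and choosing $\rho$ small enough (using $\alpha<\overline{\alpha}$) closes the induction with $a_{k+1}:=a_k+\rho^k b$.

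The main obstacle is that the rescaled forcing $\tilde f_k$ and the rescaled boundary datum of $w$ do not decay with $k$ but merely remain bounded, so \Cref{lem:c1a_f0_g0} cannot be applied directly. I would handle this by subtracting, inside $Q_1^+$, two auxiliary functions: a barrier of the form $Cx_n^{2-\gamma}$ absorbing the forcing (this profile solves $-x_n^{\gamma}\mathcal{M}^{\pm}(D^2\phi)=\text{const.}$ since $2-\gamma>1$, exploiting the degenerate weight $x_n^{\gamma}$), and a $C^{\alpha}$-extension of the boundary data into $Q_1^+$ whose size near the vertex is quantified by the Hopf principle together with the $(1+\alpha)$-order vanishing of $g-L_g$ at the origin. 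After these subtractions the residue has zero forcing and zero boundary data and \Cref{lem:c1a_f0_g0} applies directly, yielding the slope $b$. Producing these two auxiliary functions with sharp control near the boundary vertex, and ensuring the two contributions fit together quantitatively with the flatness improvement, is the technical heart of the argument.
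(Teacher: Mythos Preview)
Your overall architecture is exactly the paper's: normalize, subtract the affine part $L_g$ of the boundary datum (which preserves $S^*(\lambda,\Lambda,f)$ since $D^2L_g=0$), and then run a scale-invariant iteration producing slopes $a_k$ with geometric decay of $|a_{k+1}-a_k|$. The rescaling you write down is correct (note the slope update should read $a_{k+1}=a_k+\rho^{k\alpha}b$, not $a_k+\rho^{k}b$), and your observation that $\alpha\le 1-\gamma$ is what keeps the rescaled forcing uniformly bounded is the right structural point.

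The genuine gap is in the last paragraph. Subtracting a profile like $Cx_n^{2-\gamma}$ cannot ``absorb the forcing'' on both sides of the Pucci inequalities simultaneously: because $\mathcal{M}^{\pm}$ are sublinear rather than linear, adding $Cx_n^{2-\gamma}$ repairs the $\mathcal{M}^{-}$ inequality but worsens the $\mathcal{M}^{+}$ one by $\Lambda C(2-\gamma)(1-\gamma)$, and vice versa for subtraction. So the residue is never in $S(\lambda,\Lambda,0)$; at best you get two different one-sided functions sandwiching $w$, and \Cref{lem:c1a_f0_g0} does not apply to either. The same obstruction hits the ``$C^{\alpha}$-extension of the boundary data'': subtracting any non-affine extension injects its Hessian back into the Pucci inequalities as a new forcing term, so you do not land at zero forcing and zero boundary data. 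The paper bypasses this entirely by a compactness step (\Cref{lem1:gen}): one first divides by a large constant so that $\|f\|_{L^\infty}\le\delta$ and $\|g\|_{L^\infty}\le\delta$, and then argues by contradiction that if no good slope existed, a subsequence would converge (via interior H\"older estimates and stability) to some $\overline{u}\in S(\lambda,\Lambda,0)$ with zero boundary data on $\{x_n=0\}$, to which \Cref{lem:c1a_f0_g0} applies directly and produces the contradiction. That compactness lemma is the missing ingredient; once you have it, your iteration (\Cref{lem2:gen} in the paper) closes exactly as you describe.
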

The following remark shows that solutions of \eqref{eq:main} cannot be more regular than $C^{1,1-\gamma}$ when $0<\gamma <1$.
\begin{remark} \label{rmk1}
Let $0<\gamma<1$. For $\varepsilon \in (0,\gamma]$, consider the equations $u_t -x_n^{\gamma} \Delta u = x_n^{\gamma-\varepsilon}$ which has a solution of the form
\begin{equation*}
	u(x,t)=1-\frac{x_n^{2-\varepsilon}}{(2-\varepsilon)(1-\varepsilon)} .
\end{equation*}
Since $|Du|=x_n^{1-\varepsilon}/|1-\varepsilon|$, the solutions $u$ is $C^{1,1-\varepsilon}$ on the flat boundary $\{x_n=0\}$. This example shows that even for a H\"older continuous $f$ and the Laplace operator, we cannot expect in general the solution of \eqref{eq:model} to be more regular than $C^{1,1-\gamma}$ on $\{x_n=0\}$ when $0<\gamma <1$.
\end{remark}
Combining \Cref{thm:main} and the known $C^{1,\alpha}$-regularity results (see \cite[Theorem 1.8]{LZ22}, \cite[Theorem 1.3]{Wan92b}) gives the following global $C^{1,\alpha}$-regularity. For this purpose, $C^{1,\alpha}$-estimate near the boundary is required, and this result can be found in \cite[Lemma 3.10]{LLY23}. 
\begin{corollary} [Global $C^{1, \alpha}$-regularity]\label{cor:global}
Let $\alpha_1 \in (0,\overline{\alpha})$ with $\alpha_1 \leq 1-\gamma$, and let $\Omega \in \mathbb{R}^{n+1}$ be a bounded domain with $(0,0)\in \Omega$ and $\partial_p \Omega^+ \in C^{1,\alpha_1}$. Assume that  $f\in C(\Omega^+) \cap L^{\infty}(\Omega^+)$, $g\in C^{1,\alpha_1}(\partial_p \Omega^+)$, $F$ satisfies \ref{F1} and \ref{F4}, and $u \in C(\overline{\Omega^+})$ is a viscosity solution of  
\begin{equation} \label{eq:poisson}
	\left\{\begin{aligned}
		u_t - x_n^{\gamma} F(D^2 u,x,t) &= f && \text{in } \Omega^+ \\
		u & = g && \text{on } \partial_p \Omega^+ .
	\end{aligned}\right.
\end{equation}
Then there exists a constant $\alpha \in (0,\alpha_1]$ depending only on $n$, $\lambda$, $\Lambda$, and $\gamma$ such that $u \in C^{1,\alpha}(\overline{\Omega^+})$ with the uniform estimate 
\begin{equation*}
	\|u\|_{C^{1,\alpha} (\overline{\Omega^+})} \leq C  ( \|u\|_{L^{\infty}(\Omega^+)} + \|f\|_{L^{\infty} (\Omega^+)} +\|g\|_{C^{1,\alpha} (\partial_p \Omega^+)} ) ,
\end{equation*}
where $C>0$ is a constant depending only on $n$, $\lambda$, $\Lambda$, $\gamma$, $\alpha$, $C_{\theta}$, and $\|\partial_p \Omega^+\|_{C^{1,\alpha_1}}$.
\end{corollary}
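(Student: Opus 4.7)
The plan is to combine three pointwise $C^{1,\alpha}$-estimates, each valid on a different portion of $\overline{\Omega^+}$, and then deduce the global estimate by a standard covering argument. Split $\overline{\Omega^+}$ into the degenerate boundary $\Gamma_0 := \partial_p \Omega^+ \cap \{x_n = 0\}$, the non-degenerate part of the parabolic boundary $\Gamma_1 := \partial_p \Omega^+ \setminus \Gamma_0$, and the interior of $\Omega^+$, which is further decomposed into a near-degenerate strip $\{0<x_n\ll 1\}$ and a uniformly non-degenerate bulk $\{x_n\gtrsim 1\}$.

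At each $(x_0,t_0) \in \Gamma_0$, Theorem \ref{thm:main} applies directly after straightening the surrounding part of $\partial_p \Omega^+$ by a $C^{1,\alpha_1}$ diffeomorphism that fixes the hyperplane $\{x_n=0\}$: because the straightening map acts only in the tangential directions, the transformed operator $\widetilde F$ still satisfies \ref{F1} and \ref{F4} with constants depending on $\|\partial_p \Omega^+\|_{C^{1,\alpha_1}}$, the weight $x_n^\gamma$ is preserved up to admissible perturbations that can be absorbed into $\widetilde F$, and the transformed boundary data still lie in $C^{1,\alpha_1}(x_0,t_0)$. At each $(x_0,t_0) \in \Gamma_1$ the coefficient $x_n^\gamma$ is bounded above and below in a neighbourhood, so \eqref{eq:poisson} is locally uniformly parabolic and the classical boundary $C^{1,\alpha}$-result \cite[Theorem 1.8]{LZ22}, \cite[Theorem 1.3]{Wan92b} applies after a standard $C^{1,\alpha_1}$ flattening. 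Interior points in the uniformly non-degenerate bulk, on which $x_n$ is comparable to its centre value, are handled by the interior $C^{1,\alpha}$-theory in the same references. To bridge these two regimes in the strip $\{0 < x_n \ll 1\}$, where the equation remains degenerate but no boundary datum is directly available, I invoke the near-boundary $C^{1,\alpha}$-estimate \cite[Lemma 3.10]{LLY23}, as indicated in the statement of the corollary.

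Once the three pointwise estimates are in hand with constants depending only on the quantities in the statement, a Campanato-type characterization of $C^{1,\alpha}$ together with a finite covering of $\overline{\Omega^+}$ yields the global estimate with the claimed norm bound, with exponent $\alpha \in (0,\alpha_1]$ chosen so that $\alpha \leq \overline{\alpha}$ and $\alpha \leq 1-\gamma$. The main obstacle I foresee is verifying that the boundary-flattening diffeomorphism near a point of $\Gamma_0$ can be chosen so as to preserve the degenerate structure without distorting the weight $x_n^\gamma$; exploiting that $\{x_n=0\}$ is already flat and that a boundary-defining function can be taken to equal $x_n$ to leading order in the transverse direction, this reduces to a perturbation that is admissible within the class of operators satisfying \ref{F1} and \ref{F4}, so that Theorem \ref{thm:main} may be invoked without any modification of its constants beyond the dependence explicitly listed in the corollary.
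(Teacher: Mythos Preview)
Your proposal is correct and matches the paper's approach exactly: the paper does not give a separate proof of this corollary but merely indicates, in the sentence preceding it, that it follows by combining \Cref{thm:main} (for the flat degenerate boundary), the known interior and boundary $C^{1,\alpha}$-results \cite[Theorem 1.8]{LZ22}, \cite[Theorem 1.3]{Wan92b} (for the uniformly parabolic region), and the near-boundary estimate \cite[Lemma 3.10]{LLY23}. One minor simplification: since $\Gamma_0 = \partial_p\Omega^+ \cap \{x_n=0\}$ is already a flat hyperplane by definition of $\Omega^+$, no boundary-flattening diffeomorphism is needed at interior points of $\Gamma_0$, so your last paragraph's concern does not arise there.
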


Our second main theorems are concerned with the boundary $C^{2, \alpha}$-regularity of viscosity solutions $u$ of \eqref{eq:model}. Unlike the boundary $C^{1,\alpha}$-regularity, the $C^{2,\alpha}$-regularity of solutions depends on the regularity of the operator $F$ and forcing term $f$. This study  investigated how the conditions of $f$ relate to the $C^{2,\alpha}$-regularity for solutions of \eqref{eq:model} depending on the value of $\gamma$.
{
In order to obtain the heuristic idea, let's first consider the degenerate elliptic equation $x_n^{\gamma} F(D^2 u,x) =f$ for $\gamma \in(0,1)$. This equation can be understood as a uniformly elliptic equation $F(D^2 u,x)=x_n^{-\gamma} f$, so if $F(\cdot,x)$ is $C^{\alpha}$ at $x=0$ and $f$ is $C^{\alpha+\gamma}$ at $x=0$, we have $u$ is $C^{2,\alpha}$ at $x=0$. In fact, it can be seen that the conditions of $f$ obtained heuristically for elliptic equations are expandable for \eqref{eq:model} through this study.

Recall that the optimal regularity for solutions of \eqref{eq:poisson} was at most $C^{1,1-\gamma}$ when $0<\gamma<1$; see \Cref{rmk1}. When $f \equiv 0$ in \eqref{eq:poisson}, we can expect higher regularity for \eqref{eq:poisson} under the assumptions of certain structure hypotheses (the hypotheses will be precisely stated in \Cref{sec:shf}). Actually, since $x_n^{\gamma} F(O_n,x,t)$ plays the role of $f$, it is the same as considering $f$ where $x_n^{-\gamma}f$ is $C^{\alpha}$ at $(0,0)$.

\begin{remark} \label{rmk2}
Consider the equation $u_t -x_n^{\gamma} \Delta u =0 $ which has a solution of the form
\begin{equation*}
	u(x,t)= l(x) + tx_n + \frac{x_n^{3-\gamma}}{(3-\gamma)(2-\gamma)} ,
\end{equation*}
where $l(x)$ is a linear function. Since $\|D^2u\|= x_n^{1-\gamma} $, the solutions $u$ is $C^{2,1-\gamma}$ on the flat boundary $\{x_n=0\}$. This example shows that we cannot expect in general solutions of 
\begin{equation}\label{eq:homo_int}
	u_t - x_n^{\gamma} F(D^2 u,x,t) = 0
\end{equation}
 to be more regular than $C^{2,1-\gamma}$ on $\{x_n=0\}$ when $0<\gamma <1$. Also, $u_t(x',0,t)=0$ is an essential condition of the existence of solutions $u\in C^{2}(\overline{Q_1^+})$ of \eqref{eq:homo_int}. Indeed, if $u\in C^{1,1}(\overline{Q_1^+})$ then $|F(D^2u,x,t)|\leq C$ on $\overline{Q_1^+}$. It follows that 
\begin{equation*}
 	|u_t(x,t)| =x_n^{\gamma}|F(D^2u,x,t)|\leq Cx_n^{\gamma} \quad \text{for all } (x,t) \in \overline{Q_1^+},
\end{equation*}
when $0<\gamma<1$.
\end{remark}

We now introduce the second main theorem for the degenerate equations.
\begin{theorem}[Boundary $C^{2, \alpha}$-estimate for $0 < \gamma <1$] \label{thm:main2}
	Let $\alpha \in(0, \overline{\alpha})$ with $\alpha \leq 1-\gamma$, $(x_0,t_0) \in \partial_p Q_1^+ \cap \{x_n=0\}$, and let  $F$ satisfies \ref{F1}, \ref{F2}, and \ref{F3}. Suppose that $g\in C^{2,\alpha}(x_0,t_0)$ with $g_t(x_0,t_0)=0$, and $u \in C(\overline{Q_1^+})$ is a viscosity solution of 
\begin{equation*} 
	\left\{\begin{aligned}
		u_t - x_n^{\gamma} F(D^2 u,x,t) &= 0 && \text{in } Q_1^+ \\
		u & = g && \text{on } \partial_p Q_1^+ \cap \{x_n=0\}. 
	\end{aligned}\right.
\end{equation*}
Then $u \in C^{2, \alpha}(x_0,t_0)$, i.e., there exists a polynomial $P$ with $\deg P \leq 2$ such that
	\begin{align*}
		|u(x,t)-P(x)|\leq CN(|x-x_0|+|t-t_0|^{\frac{1}{2-\gamma}} )^{2+\alpha} 
	\end{align*}
	for all $(x,t) \in Q_1^+ \cap Q_1^+(x_0,t_0)$, $F(D^2 P,x_0,t_0) =0$, $P(x',0) \equiv P_g(x',0)$, and 
	\begin{equation*}
		u_t(x_0,t_0)=0, \quad |Du(x_0,t_0)| + \|D^2u(x_0,t_0)\| \leq CN,
	\end{equation*}
	where the constant $N$ is given by
	\begin{equation*}
		N\coloneqq (\|u\|_{L^{\infty}(Q_1^+)} + \|g\|_{C^{2,\alpha}(x_0,t_0)}+ [\beta^2]_{C^{\alpha}(x_0,t_0)}+ |F(O_n,x_0,t_0)|)
	\end{equation*}
	and $C>0$ is a constant depending only on $n$, $\lambda$, $\Lambda$, $\gamma$, $\alpha$, and $[\beta^1]_{C^{\alpha}(x_0,t_0)}$.
\end{theorem}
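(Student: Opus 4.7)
My plan is to normalize the problem and iterate polynomial approximations at the parabolic scale $(x,t)\mapsto(\rho x,\rho^{2-\gamma}t)$, which is the homogeneity preserving the principal part $\partial_t - x_n^\gamma F(D^2\cdot,0,0)$. First I would translate so that $(x_0,t_0)=(0,0)$, subtract from $u$ the degree-$2$ polynomial $P_g$ furnishing the $C^{2,\alpha}$ expansion of the boundary data, and rescale so that the normalization constant $N$ equals a small $\varepsilon>0$ to be chosen later. The compatibility $g_t(0,0)=0$ is precisely what allows $P_g$ to be time-independent, in accordance with the obstruction in \Cref{rmk2}.

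The inductive core of the argument is the following: there exist universal $\rho\in(0,1)$ and $C>0$, and polynomials $P_k$ of degree at most $2$ in $x$ with $F(D^2P_k,0,0)=0$ and $P_k(x',0)=P_g(x',0)$, such that
\begin{equation*}
\sup_{Q_{\rho^k}^+}|u-P_k|\le\rho^{k(2+\alpha)},\qquad |D(P_{k+1}-P_k)(0)|+\rho^k\|D^2(P_{k+1}-P_k)\|\le C\rho^{k(1+\alpha)}.
\end{equation*}
Summing the displacements gives $P_k\to P$ and the pointwise estimate on $u-P$. Because every $P_k$ is $t$-independent, the modulus of $u(0,\cdot)-P(0)$ at $t=0$ is $O(|t|^{(2+\alpha)/(2-\gamma)})=o(|t|)$, yielding $u_t(0,0)=0$ as claimed.

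To carry out the inductive step I rescale $w(x,t):=\rho^{-k(2+\alpha)}(u-P_k)(\rho^kx,\rho^{k(2-\gamma)}t)$, which solves $w_t-x_n^\gamma F_k(D^2w,x,t)=0$ in $Q_1^+$ with vanishing flat boundary data, where $F_k$ is uniformly elliptic with the same constants $\lambda,\Lambda$ and the effective source
\begin{equation*}
F_k(O_n,x,t)=\rho^{-k\alpha}\bigl[F(D^2P_k,\rho^kx,\rho^{k(2-\gamma)}t)-F(D^2P_k,0,0)\bigr]
\end{equation*}
is as small as desired in $L^\infty(Q_1^+)$, thanks to hypotheses \ref{F2}, \ref{F3} and the H\"older control of $\beta^1,\beta^2$ encoded in $N$, provided $\varepsilon$ is small. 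I then compare $w$ to the solution $v$ of the frozen problem $v_t-x_n^\gamma F(D^2v+D^2P_k,0,0)=0$ in $Q_1^+$ sharing the boundary data of $w$, and apply \Cref{thm:main} together with a maximum-principle argument on $w-v$ to obtain $\|w-v\|_{L^\infty(Q_{1/2}^+)}\le\eta$ for any prescribed $\eta$.

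The main obstacle is proving a boundary $C^{2,\alpha_0}$ expansion for the frozen problem with some $\alpha_0>\alpha$: namely, the existence of a quadratic $\widetilde P$ with $F(D^2\widetilde P+D^2P_k,0,0)=0$, $\widetilde P(x',0)\equiv0$, and $\|v-\widetilde P\|_{L^\infty(Q_\rho^+)}\le C\rho^{2+\alpha_0}$. I would establish this by compactness and a Liouville-type classification: entire viscosity solutions of $V_t-x_n^\gamma F_0(D^2V)=0$ on $\{x_n>0\}\times(-\infty,0]$ of at most quadratic growth and with zero flat data are quadratic polynomials in $x$ annihilating $F_0$, where \ref{F2} provides enough second-order structure to run the blow-up. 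The parabolic scaling exponent $2-\gamma$ here is exactly the one appearing in \Cref{rmk2}. Finally, setting $P_{k+1}:=P_k+\rho^{k(2+\alpha)}\widetilde P(\cdot/\rho^k)$ and fixing $\rho$ small enough that $C\rho^{\alpha_0-\alpha}$ and $\rho^{-(2+\alpha)}\eta$ are both at most $1/2$ closes the induction.
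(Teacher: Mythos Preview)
Your overall architecture---normalize by subtracting the boundary expansion and dividing by $N$, then iterate degree-two approximations at the intrinsic scale $(x,t)\mapsto(\rho x,\rho^{2-\gamma}t)$---matches the paper's. Two minor points: you should subtract not only $P_g$ but also the linear term $u_n(0,0)x_n$ (well-defined by \Cref{thm:main}) so that the rescaled function satisfies $Du(0,0)=0$, as the iteration requires; and where you solve an auxiliary frozen problem $v$ and estimate $w-v$, the paper instead proves the one-step improvement (\Cref{lem1:gen_c2a}) by a direct compactness/contradiction argument, extracting a limit that solves the frozen equation. Both routes are standard and either works once the frozen-problem regularity is in hand.

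The substantive gap is precisely there. You correctly isolate as ``the main obstacle'' a boundary $C^{2,\alpha_0}$ expansion for solutions of $v_t-x_n^{\gamma}F_0(D^2v)=0$ with zero flat data, and you propose to obtain it from a Liouville classification of entire ancient solutions on $\{x_n>0\}\times(-\infty,0]$ with quadratic growth. Such a Liouville theorem is not available for the weighted operator, and deriving it is essentially as hard as the target estimate; \ref{F2} alone does not supply it without already having pointwise second-order control up to $\{x_n=0\}$. The paper avoids this entirely with a short reduction to known elliptic theory (\Cref{lem:c2a_fz}): difference quotients in $t$ show $u_t\in S(\lambda,\Lambda,0)$ with zero flat data, so the Lipschitz estimate of \Cref{lem:u<x} gives $|u_t|\le Cx_n$, hence $|x_n^{-\gamma}u_t|\le Cx_n^{1-\gamma}\le C$. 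At each fixed time the equation then reads as the uniformly elliptic problem $F_0(D^2u(\cdot,t))=x_n^{-\gamma}u_t(\cdot,t)$ with bounded right-hand side and zero flat data, and the elliptic boundary $C^{2,\alpha}$ results of \cite{SS14,LZ20} furnish the needed quadratic expansion. This is the missing idea; once you have it, your iteration closes exactly as written.
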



To prove the $C^{2,\alpha}$-regularity of solutions to uniformly parabolic equations, it suffices to have $f \in C^{\alpha}(x_0,t_0)$, which is a less stringent condition than the condition in \Cref{thm:main2} for degenerate equations. From the following theorem, we know that the $C^{2,\alpha}$-regularity of solutions to singular equations can be achieved with lower regularity than $f \in C^{\alpha}(x_0,t_0)$, even if it is discontinuous. The following result is our third main theorem for the singular equations.
\begin{theorem}[Boundary $C^{2, \alpha}$-estimate for $\gamma \leq 0$] \label{thm:main2_sg}
	Let $\alpha \in(0, \overline{\alpha})$, $(x_0,t_0) \in \partial_p Q_1^+ \cap \{x_n=0\}$, and let  $F$ satisfies \ref{F1}, \ref{F2}, and \ref{F3}. Suppose that  
	\begin{equation} \label{cond_sg}
		|f(x,t)-f(x_0,t_0)| \leq K (|x-x_0|+|t-t_0|^{\frac{1}{2-\gamma}})^{\alpha+\gamma} \quad \text{for all } (x,t) \in \overline{Q_1^+} ,
	\end{equation}
	$g\in C^{2,\alpha}(x_0,t_0)$, and $u \in C(\overline{Q_1^+})$ is a viscosity solution of 
\begin{equation} \label{eq:c2a_sg}
	\left\{\begin{aligned}
		u_t - x_n^{\gamma} F(D^2 u,x,t) &= f && \text{in } Q_1^+ \\
		u & = g && \text{on } \partial_p Q_1^+ \cap \{x_n=0\}. 
	\end{aligned}\right.
\end{equation}
Then $u \in C^{2, \alpha^*}(x_0,t_0)$, i.e., there exists a polynomial $P$ with $\deg P \leq 2$ such that
	\begin{align*}
		|u(x,t)-P(x,t)|\leq CN(|x-x_0|+|t-t_0|^{\frac{1}{2-\gamma}} )^{2+\alpha^*} 
	\end{align*}
	for all $(x,t) \in Q_1^+ \cap Q_1^+(x_0,t_0)$, $F(D^2 P,x_0,t_0) =0$, $\partial_t P =\partial_t P_g$, $P(x',0,t) \equiv P_g(x',0,t)$, and 
	\begin{equation*}
		|u_t(x_0,t_0)| + |Du(x_0,t_0)| + \|D^2u(x_0,t_0)\| \leq CN,
	\end{equation*}
	where the constant $N$ is given by
	\begin{equation*}
		N\coloneqq (\|u\|_{L^{\infty}(Q_1^+)} + \|g\|_{C^{2,\alpha}(x_0,t_0)}+ [\beta^2]_{C^{\alpha}(x_0,t_0)} + |F(O_n,x_0,t_0)| + K),
	\end{equation*}
	$a^* \coloneqq \min\{\alpha, -\gamma\}$, and $C>0$ is a constant depending only on $n$, $\lambda$, $\Lambda$, $\gamma$, $\alpha$, and $[\beta^1]_{C^{\alpha}(x_0,t_0)}$.
\end{theorem}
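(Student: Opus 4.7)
The plan is to adapt the approximation-iteration scheme used for \Cref{thm:main2} to the singular regime $\gamma\leq 0$ and the Hölder-type hypothesis \eqref{cond_sg} on $f$. After translating $(x_0,t_0)$ to the origin, I would construct inductively a sequence of polynomials $P_k$ with $\deg P_k\leq 2$ satisfying the compatibility conditions $F(D^2P_k,0,0)=0$, $\partial_tP_k=\partial_tP_g$, and $P_k(x',0,t)\equiv P_g(x',0,t)$, together with the geometric decay
\begin{equation*}
\|u-P_k\|_{L^\infty(Q_{\rho^k}^+)}\leq CN\rho^{k(2+\alpha^*)}, \qquad \|P_{k+1}-P_k\|_{L^\infty(Q_{\rho^k}^+)}\leq CN\rho^{k(2+\alpha^*)},
\end{equation*}
for a sufficiently small universal $\rho\in(0,1)$. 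Sending $k\to\infty$ produces the target polynomial $P$ and the claimed pointwise $C^{2,\alpha^*}$-estimate at $(0,0)$.

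The inductive step is carried out by the parabolic blow-up
\begin{equation*}
\widetilde u(y,s):=\rho^{-k(2+\alpha^*)}\bigl(u(\rho^k y,\rho^{k(2-\gamma)}s)-P_k(\rho^k y,\rho^{k(2-\gamma)}s)\bigr),
\end{equation*}
which, using the $1$-homogeneity of $F$ and the identity $F(D^2P_k,0,0)=0$, solves an equation of the same form on $Q_1^+$ with rescaled operator $\widetilde F(M,y,s):=\rho^{-k\alpha^*}F(\rho^{k\alpha^*}M+D^2P_k,\rho^k y,\rho^{k(2-\gamma)}s)$ and rescaled forcing $\widetilde f$. Two basic computations drive the argument: the $[\beta^i]_{C^\alpha}$ seminorms of $\widetilde F$ at the origin contract like $\rho^{k\alpha}$, while on the flat boundary the identity $\partial_tP_g=g_t(0,0)=u_t(0,0)=f(0,0)$ (derived from the boundary trace of the equation when $\gamma<0$) together with \eqref{cond_sg} and the identity $(-\gamma-\alpha^*)+(\alpha+\gamma)=\alpha-\alpha^*\geq0$ give $\|\widetilde f\|_{L^\infty(Q_1^+)}\leq CK\rho^{k(\alpha-\alpha^*)}\leq CK$. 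Thus the choice $\alpha^*=\min\{\alpha,-\gamma\}$ is precisely what is needed to keep every rescaled quantity bounded; when $\alpha^*=\alpha$ the rescaled forcing in fact decays.

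The approximation step then compares $\widetilde u$ to a viscosity solution $v$ of the frozen homogeneous problem $v_t-y_n^\gamma F(D^2v+D^2P_k,0,0)=0$ in $Q_1^+$ with the rescaled polynomial data on $\{y_n=0\}$. A stability/compactness argument for viscosity solutions yields $\|\widetilde u-v\|_{L^\infty(Q_{1/2}^+)}\leq\varepsilon$, which is arbitrarily small once $k$ is large. Because $v$ solves a translation- and constant-coefficient equation with polynomial data on the flat boundary, a direct boundary $C^{2,\alpha}$-estimate for this model problem produces a quadratic polynomial $Q$ with $|v-Q|\leq C_0\rho^{2+\alpha}$ in $Q_\rho^+$. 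Setting $P_{k+1}:=P_k+\rho^{k(2+\alpha^*)}Q(\cdot/\rho^k,\cdot/\rho^{k(2-\gamma)})$, unraveling the scaling, and choosing $\rho$ so small that $C_0\rho^{\alpha-\alpha^*}\leq\tfrac{1}{2}$ closes the induction.

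The main obstacle is the approximation/compactness step in the singular regime. Since $y_n^\gamma$ blows up at $\{y_n=0\}$ when $\gamma<0$, standard viscosity-solution stability theorems do not apply verbatim near the flat boundary; one needs an explicit barrier built from the natural scaling power $y_n^{2-\gamma}$, which is homogeneous under $(y,s)\mapsto(\rho y,\rho^{2-\gamma}s)$, together with the boundary Lipschitz/Hopf control supplied by \Cref{thm:main}, to rule out loss of mass at the boundary. A secondary technical point is to propagate the compatibility conditions $F(D^2P_{k+1},0,0)=0$ and $\partial_tP_{k+1}=\partial_tP_g$ across the induction, which follows from the continuity of $F$ and the fact that the quadratic correction $Q$ itself solves the frozen model equation with matching time derivative.
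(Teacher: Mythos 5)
Your scheme follows the broad outline of the paper's proof (iteration via rescaled compactness lemmas with the degraded exponent $\alpha^*=\min\{\alpha,-\gamma\}$), but it hinges on an identity that is false, and the falsity is precisely what the paper's actual argument is engineered to sidestep.

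You assert that $\partial_t P_g = g_t(0,0) = u_t(0,0) = f(0,0)$, attributing the last equality to ``the boundary trace of the equation when $\gamma<0$.'' The reasoning seems to be that $x_n^\gamma F(D^2u,x,t)\to 0$ as $x_n\to 0^+$, so $u_t(0,0)=f(0,0)$. But this limit need not vanish even for smooth solutions: take $n=1$, $\gamma=-1$, $F(M)=M$, and $u(x,t)=t+x^3$. Then $u_t=1$, $x^{-1}u''=6$, so $f=u_t-x^{-1}u''=-5$, and $u_t(0,0)=1\neq -5=f(0,0)$. More generally, when $u\in C^{2,\alpha^*}$ and $F(D^2u(0,0),0,0)=0$, one only gets $|F(D^2u,x,t)|\lesssim (|x|+|t|^{1/(2-\gamma)})^{\alpha^*}$, and since $\alpha^*\leq-\gamma$ the product $x_n^\gamma F$ is at best bounded near the boundary, not $o(1)$. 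So the constant $f(0,0)-\partial_tP_g$ is genuinely nonzero in general, and with your choice of $P_k$ containing the $t$-linear term $\partial_tP_g\cdot t$, the rescaled forcing $\widetilde f=\rho^{-k(\alpha^*+\gamma)}(f-\partial_tP_k)$ contains the unrescaled constant $\rho^{-k(\alpha^*+\gamma)}(f(0,0)-\partial_tP_g)$, which blows up when $\alpha^*+\gamma<0$ (and does not decay when $\alpha^*+\gamma=0$). Your stated bound $\|\widetilde f\|_{L^\infty}\leq CK\rho^{k(\alpha-\alpha^*)}$ only accounts for the oscillation of $f$, not this constant, so the induction does not close.

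The paper avoids the problem with a different decomposition: it subtracts $P_g(x,t)+u_n(0,0)x_n$ once at the outset, sets $\overline f:=f-f(0,0)$ (which does vanish at the origin thanks to \eqref{cond_sg}), and then absorbs the leftover constant into the \emph{operator} by defining $\overline F(M,x,t):=F(M+D^2P_g,x,t)+x_n^{-\gamma}\bigl(f(0,0)-\partial_tP_g\bigr)$. Because $\gamma<0$, the absorbed term $x_n^{-\gamma}\bigl(f(0,0)-\partial_tP_g\bigr)$ is a continuous function vanishing at the boundary, but its modulus of continuity is only $C^{-\gamma}$, not $C^\alpha$. This is exactly where the exponent degrades: the resulting $\overline\beta^2=\beta^1\|D^2P_g\|+\beta^2+x_n^{-\gamma}(f(0,0)-\partial_tP_g)$ lies only in $C^{\alpha^*}(0,0)$ with $\alpha^*=\min\{\alpha,-\gamma\}$, after which \Cref{lem2:gen_c2a_sg} is applied with $\alpha^*$ in place of $\alpha$ and the iteration polynomials have no $t$-dependence at all. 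You attribute the loss to the rescaled forcing instead, which is both incorrect and has the side effect of (implicitly) requiring the false compatibility $\partial_tP_g=f(0,0)$. Until you adopt the absorption device (or some substitute that copes with $f(0,0)\neq\partial_tP_g$), the proposed induction fails at the very first step.
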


Finally, such boundary $C^{2,\alpha}$-regularity can be applied to describe the solvability of the Cauchy-Dirichlet problems \eqref{eq:poisson} and the short time existence of solutions for \eqref{eq:FPME_tr}; see \cite[Theorem]{DH98} and \cite[Theorem 1.1]{KL09} for the short time existence.
\begin{theorem}[The solvability of the Cauchy-Dirichlet problem for $0 < \gamma < 1$] \label{thm:solvability}
	 Let $\alpha_1 \in (0,\overline{\alpha})$ with $\alpha_1 \leq 1-\gamma$ and let $\Omega_T^+ $ be a bounded domain with $(0,0)\in \Omega$ and $\partial \Omega^+ \in C^{2,\alpha_1}$. Suppose that $F$ satisfies \ref{F1}, \ref{F2}, and \ref{F3}. If $g\in C^{2,\alpha_1}(\overline{\Omega_T^+})$ satisfy the compatibility conditions
	\begin{equation*}
		\left\{\begin{aligned}
			g_t -x_n^{\gamma}F(D^2 g,x,t) &=0 && \text{on } \partial_c \Omega_T^+ \\
			g_t &=0 &&  \text{on } \partial_p \Omega_T^+ \cap \{x_n=0\},
		\end{aligned}\right.
	\end{equation*}
	then the Cauchy-Dirichlet problem
	\begin{equation*} 
	\left\{\begin{aligned}
		u_t - x_n^{\gamma} F(D^2 u,x,t) &= 0 && \text{in } \Omega_T^+ \\
		u & = g && \text{on } \partial_p \Omega_T^+   
	\end{aligned}\right.
\end{equation*}
	is uniquely solvable in $H^{2,\alpha}(\overline{\Omega_T^+ })$ for some $\alpha \in (0,\alpha_1]$.
\end{theorem}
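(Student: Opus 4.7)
The plan is to combine the method of continuity with the a priori boundary $C^{2,\alpha}$-estimates furnished by \Cref{thm:main2} (and the global $C^{1,\alpha}$-estimate from \Cref{cor:global}). I would first dispose of uniqueness via a comparison principle and then build existence through a homotopy from a linear base case.

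For uniqueness, the degenerate operator $u \mapsto u_t - x_n^{\gamma} F(D^2 u, x, t)$ is proper in the viscosity-solution sense, and since $x_n^{\gamma}$ is continuous and nonnegative on $\overline{\Omega_T^+}$ for $\gamma \in (0,1)$, the Crandall-Ishii-Lions doubling-of-variables argument delivers the comparison principle; any two $H^{2,\alpha}$-solutions with the same data on $\partial_p \Omega_T^+$ therefore coincide.

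For existence, I would interpolate by setting $F_\sigma(M,x,t) \coloneqq (1-\sigma)\,\mathrm{tr}(M) + \sigma F(M,x,t)$ and define
\begin{equation*}
	S \coloneqq \{\sigma \in [0,1] : \text{the problem with operator } F_\sigma \text{ admits an } H^{2,\alpha}\text{-solution}\}
\end{equation*}
for some fixed $\alpha \in (0, \alpha_1]$. Each $F_\sigma$ satisfies \ref{F1}--\ref{F3} with uniform structural constants, and the compatibility conditions are preserved along the homotopy because $g$ is fixed and because $g_t = 0$ on the flat boundary is independent of $F_\sigma$. The base case $\sigma = 0$ is the linear degenerate heat equation $u_t - x_n^{\gamma} \Delta u = 0$, which is solvable in $H^{2,\alpha}$ via weighted Schauder theory (alternatively, by solving in $\{x_n > \varepsilon\}$ and passing to the limit with the aid of \Cref{cor:global} and \Cref{thm:main2}). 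Openness of $S$ follows from the implicit function theorem applied to the linearization at a solution $u^*$, whose coefficients inherit $C^{\alpha}$-regularity on $\overline{\Omega_T^+}$; invertibility of this linearized degenerate operator on the appropriate compatibility subspace of $H^{2,\alpha}$ reduces, via a frozen-coefficient argument, to the same base-case linear theory. Closedness of $S$ is the payoff of the main regularity results: for $\sigma_k \to \sigma_*$ with $\sigma_k \in S$, \Cref{cor:global} and \Cref{thm:main2} (together with classical interior and lateral-boundary $C^{2,\alpha}$-estimates, which apply where $x_n > 0$) yield uniform $H^{2,\alpha}$-bounds on the $u_k$; extracting an $H^{2,\alpha'}$-convergent subsequence for any $\alpha' < \alpha$ then produces a solution at $\sigma_*$.

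The main obstacle will be the openness step, which hinges on Schauder-type solvability for the linearized degenerate parabolic equation under the precise compatibility constraints imposed along the flat boundary and at the corner $\partial_c \Omega_T^+$. Matching the flat-boundary $C^{2,\alpha}$-regularity from \Cref{thm:main2} with the classical $C^{2,\alpha}$-regularity along $\partial_p \Omega_T^+ \setminus \{x_n = 0\}$ at the edge where the two meet is where the corner compatibility $g_t - x_n^{\gamma} F(D^2 g, x, t) = 0$ on $\partial_c \Omega_T^+$ becomes essential; without this identity the two local Taylor expansions of $u$ would be inconsistent on the shared edge and the global $H^{2,\alpha}$-estimate would break down.
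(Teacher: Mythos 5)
Your proposal takes a genuinely different route from the paper, and it has several gaps.

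The paper does not use a method of continuity at all. Existence and uniqueness of a \emph{viscosity} solution already come directly from Perron's method (\Cref{lem:perron}) together with the comparison principle (\Cref{com_prin}); this requires no Schauder theory and no homotopy. The actual content of the proof of \Cref{thm:solvability} is a \emph{regularity estimate}: for each $(x_0,t_0)\in\overline{\Omega_T^+}$ one produces a quadratic polynomial $P$ approximating $u$ to order $2+\alpha$ with uniformly controlled coefficients. For boundary points this is \Cref{thm:main2} (flat part) or the classical result in \cite{LZ22} (lateral part). For interior points one picks the nearest parabolic-boundary point $(\tilde x_0,\tilde t_0)$ at distance $r$, takes the boundary Taylor polynomial $P_1$ there, and then applies Wang's \emph{uniformly parabolic} interior $C^{2,\alpha}$-estimate to $v=u-P_1$ on the cylinder $\Sigma$ of radius comparable to $r/2$ about $(x_0,t_0)$ (where $x_n^\gamma$ is bounded above and below). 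The $(2+\alpha_1)$-decay of $v$ from \eqref{u-p1} precisely compensates the $r$-dependence of Wang's constants, yielding a correction $P_2$ with $\|P_2\|\lesssim r^{2+\alpha}$ and a uniform global bound. Your proposal asserts this step as given (``Cor 1.3 and Thm 1.4 \ldots yield uniform $H^{2,\alpha}$-bounds'') but it is exactly the nontrivial gluing that occupies the paper's proof, and it does not follow formally from the cited statements.

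Two further concrete problems with the homotopy $F_\sigma=(1-\sigma)\,\mathrm{tr}+\sigma F$. First, the corner compatibility condition $g_t-x_n^\gamma F_\sigma(D^2 g,x,t)=0$ on $\partial_c\Omega_T^+$ is \emph{not} preserved as $\sigma$ varies: you only address the flat-boundary condition $g_t=0$, which is $F$-independent, but the corner identity depends explicitly on $F_\sigma$ and generically fails for intermediate $\sigma$ unless $\mathrm{tr}(D^2 g)=F(D^2 g,x,0)$ on $\partial\Omega^+$. This means the very set $S$ is ill-posed unless you also deform $g$, which you do not control. Second, the openness step requires solvability of the linearized degenerate equation $v_t-x_n^\gamma a_{ij}(x,t)v_{ij}=h$ with merely Hölder coefficients, in $H^{2,\alpha}$, under its own compatibility constraints. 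You concede this ``reduces \ldots to the same base-case linear theory,'' but that base case is itself a degenerate Cauchy--Dirichlet problem of precisely the type the theorem is trying to solve; the reduction is essentially circular and, in any case, the weighted Schauder machinery it invokes is neither proved nor cited in the paper. The Perron route sidesteps all of this: one never needs to solve any auxiliary linear degenerate problem with prescribed regularity.

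In short: you correctly identify comparison as the tool for uniqueness and correctly flag the flat-boundary/interior matching as the delicate spot, but you miss that the paper resolves existence by Perron rather than continuity, that the corner compatibility is not homotopy-invariant, and that both the openness and closedness steps of your scheme quietly presuppose exactly the estimates that form the substance of the paper's proof.
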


Similar to the one described above, the solvability of the Cauchy-Dirichlet problem for singular equations can be derived.  It is also worth emphasizing that while \eqref{cond_sg} provides a sufficient condition for boundary $C^{2,\alpha}$-regularity, it does not guarantee interior $C^{2,\alpha}$-regularity.
\begin{theorem}[The solvability of the Cauchy-Dirichlet problem for $\gamma \leq 0$] \label{thm:solvability_s}
	 Let $\alpha_1 \in (0,\overline{\alpha})$ and let $\Omega_T^+ $ be a bounded domain with $(0,0)\in \Omega$ and $\partial \Omega^+ \in C^{2,\alpha_1}$. Suppose that $F$ satisfies \ref{F1}, \ref{F2}, and \ref{F3'}. If $f \in H^{\alpha_1}(\overline{\Omega_T^+})$ and $g\in C^{2,\alpha_1}(\overline{\Omega_T^+})$ satisfy the compatibility conditions
	\begin{equation*}
			g_t -x_n^{\gamma}F(D^2 g,x,t) =f  \quad  \text{on } \partial_c \Omega_T^+,
	\end{equation*}
	then the Cauchy-Dirichlet problem
	\begin{equation*} 
	\left\{\begin{aligned}
		u_t - x_n^{\gamma} F(D^2 u,x,t) &= f && \text{in } \Omega_T^+ \\
		u & = g && \text{on } \partial_p \Omega_T^+   
	\end{aligned}\right.
\end{equation*}
	is uniquely solvable in $C^{2,\alpha}(\overline{\Omega_T^+ })$ for some $\alpha \in (0,\alpha^*]$.
\end{theorem}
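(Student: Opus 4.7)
The plan is to establish uniqueness via the comparison principle for the class $S^*(\lambda,\Lambda,f)$, and to establish existence via the method of continuity, using \Cref{thm:main2_sg} as the core a priori boundary estimate. For uniqueness, if $u_1,u_2$ are two $C^{2,\alpha}$ solutions sharing the same data, then $w=u_1-u_2$ satisfies a Pucci inequality of the form \eqref{eq:main} with $f\equiv 0$ and $w=0$ on $\partial_p\Omega_T^+$; since $x_n^{\gamma}\ge 0$ in $\Omega_T^+$, the parabolic maximum principle (applied separately in $\{x_n>\varepsilon\}$ and passed to the limit) yields $w\equiv 0$.

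For existence, I would consider the one-parameter family
\begin{equation*}
F_s(M,x,t) := sF(M,x,t) + (1-s)\lambda\operatorname{tr}(M),\qquad s\in[0,1],
\end{equation*}
which inherits the hypotheses \ref{F1}, \ref{F2}, \ref{F3'} uniformly in $s$, and set
\begin{equation*}
\Sigma := \{s\in[0,1] : \text{the problem with } F_s \text{ is solvable in } C^{2,\alpha}(\overline{\Omega_T^+})\}.
\end{equation*}
The endpoint $s=0$ is the linear degenerate heat equation $u_t - \lambda x_n^{\gamma}\Delta u = f$, whose $C^{2,\alpha}$ solvability under the given compatibility condition is classical (it can also be obtained from the a priori estimate below combined with standard Galerkin approximation). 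Closedness of $\Sigma$ is immediate from the a priori estimate and Arzelà--Ascoli, which extract a $C^{2,\alpha'}$ limit ($\alpha'<\alpha^*$) solving the problem at the limiting parameter. Openness follows from the implicit function theorem in Banach spaces: at $s_0\in\Sigma$ with solution $u_{s_0}$, the linearization
\begin{equation*}
L w = w_t - x_n^{\gamma}\sum_{i,j} a_{ij}(x,t)\,\partial_{ij}w,\qquad a_{ij}(x,t) := \partial_{M_{ij}}F_{s_0}(D^2 u_{s_0}(x,t),x,t),
\end{equation*}
has $H^{\alpha}$ coefficients (by \ref{F3'} combined with $u_{s_0}\in C^{2,\alpha}$), and invertibility from $C^{2,\alpha}$ onto $H^{\alpha}$ is exactly the content of the same a priori estimate applied to the linearization.

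The a priori $C^{2,\alpha}$-estimate itself is the heart of the argument, and I would assemble it from three regions. First, at each point of the flat portion $\partial_p\Omega_T^+\cap\{x_n=0\}$, \Cref{thm:main2_sg} gives a pointwise $C^{2,\alpha^*}$-expansion with constants controlled by $\|u\|_{L^{\infty}}$, $\|g\|_{C^{2,\alpha_1}}$, $|F(O_n,\cdot)|$, and the $H^\alpha$ constant of $f$ (note that condition \eqref{cond_sg} follows from $f\in H^{\alpha_1}$ because $\alpha+\gamma\le\alpha$). Second, on compact subsets of $\Omega_T^+$ the equation is uniformly parabolic (since $x_n^\gamma$ is bounded there), so standard interior Evans--Krylov and Schauder theory yields interior $C^{2,\alpha}$-bounds. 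Third, on the curved portion of $\partial_p\Omega_T^+$, where $x_n^\gamma$ is again bounded, standard boundary $C^{2,\alpha}$-theory for uniformly parabolic equations on $C^{2,\alpha_1}$-domains applies. These three pieces are patched via a finite covering argument. The compatibility condition $g_t - x_n^{\gamma}F(D^2 g,x,t)=f$ on $\partial_c\Omega_T^+$ is precisely what makes the two sets of estimates compatible at the corner $\partial_c\Omega_T^+\cap\{x_n=0\}$, ensuring \Cref{thm:main2_sg} is applicable there.

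The principal obstacle is the patching, in particular the behaviour near the corner $\partial_c\Omega_T^+\cap\{x_n=0\}$, where \Cref{thm:main2_sg} must be applied at boundary points lying on the initial time slice; one must verify that the proof of \Cref{thm:main2_sg} is robust with respect to the presence of an initial-time slice (essentially, by suitable reflection or by restricting the parabolic cylinders used in the iteration). A secondary difficulty is the openness step: rather than \ref{F3}, hypothesis \ref{F3'} is assumed, and one must check that the linearization inherits coefficients of regularity sufficient to apply \Cref{thm:main2_sg} to $Lw$, which requires quoting the Schauder-type version of the linear estimate applied to the linearized operator together with the continuity of $D_M F$.
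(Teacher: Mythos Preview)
Your proposal is broadly reasonable but takes a heavier route than the paper, and one step is genuinely shaky. The paper does not use the method of continuity at all. Existence of a viscosity solution is already available from Perron's method (the paper's \Cref{lem:perron} and the comparison principle \Cref{com_prin}), so the proof of \Cref{thm:solvability_s} is purely a regularity upgrade: starting from the viscosity solution, one shows it lies in $C^{2,\alpha}(\overline{\Omega_T^+})$. Your continuity-method approach would also work in principle, but the openness step as you wrote it requires Fr\'echet differentiability of the map $u\mapsto F(D^2u,\cdot)$, i.e.\ that $D_M F$ exists and is continuous. The hypotheses \ref{F1}--\ref{F2} only give uniform ellipticity and concavity/convexity, not $C^1$ in $M$, so the linearization you write down need not exist. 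You flag this yourself as a ``secondary difficulty'', but it is in fact the main gap; one would have to either add a smoothness hypothesis on $F$ or replace the implicit function theorem by a more delicate argument.

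For the a priori estimate itself, the paper's patching is more explicit than a finite covering. Rather than covering $\Omega_T^+$ by balls and gluing, the paper takes an arbitrary interior point $(x_0,t_0)$, lets $(\tilde x_0,\tilde t_0)$ be a nearest point of $\partial_p\Omega_T^+$ at intrinsic distance $r$, and uses the boundary $C^{2,\alpha^*}$ polynomial $P_1$ from \Cref{thm:main2_sg} (or from \cite{LZ22} on the curved/initial portion) at $(\tilde x_0,\tilde t_0)$ to write $v=u-P_1$, which satisfies a uniformly parabolic equation on the interior half-cylinder of radius $r/2$ with right-hand side and oscillation of size $O(r^{2+\alpha})$. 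The interior Evans--Krylov estimate then produces a second polynomial $P_2$ with $\|P_2\|\lesssim r^{2+\alpha}$, and $P_1+P_2$ is the desired approximating polynomial at $(x_0,t_0)$; the two regimes $|x-x_0|+|t-t_0|^{1/(2-\gamma)}\lessgtr r/2$ are handled separately. This nearest-boundary-point technique is what makes the pointwise boundary estimates of \Cref{thm:main2_sg} combine with interior estimates into a genuine global $C^{2,\alpha}$ bound; a bare finite-cover argument does not by itself explain why the constants do not blow up as the cover refines toward $\{x_n=0\}$.
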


When we investigate the boundary regularity for solutions of \eqref{eq:model}, the essential challenge arises from the fact that the fully nonlinear operator $F$ strongly depends on the term $x_n^{\gamma}$. It causes the ellipticity constants to be zero or infinity near the boundary and makes it difficult to construct a barrier function using the fundamental solution of the heat equation. In the case of $F(D^2u,x,t)=\Delta u$, estimates of $Du$, $u_{ii} \, (1\leq i<n)$, and $u_t$ can be obtained using Bernstein's technique; we refer to \cite{KL09}. They obtain an estimate of $x_n^{\gamma}u_{nn}$ using these estimates and the equation that $u$ satisfies. For smooth and concave operator $F$, which is a natural extension of $\Delta$, the estimate of $u_{in}\,(1\leq i <n)$ is required. However, since the $u_n$ is not a function in the same solutions class, it is difficult to obtain the estimate of $u_{in} \, (1\leq i <n)$ using Bernstein's technique. In contrast, our approach can obtain estimates of these derivatives even under the general $F$. Also, our study accurately captures the variations of assumptions regarding the existence of a classical solution based on the value range of $\gamma$.

The paper is organized as follows. In \Cref{sec:pre}, we summarize several notations, definitions, and known results that will be used throughout the paper. \Cref{sec:c1a} is devoted to the proof of \Cref{thm:main} for the solutions class. \Cref{sec:dirichlet} is devoted to the proof of \Cref{thm:main2} and \Cref{thm:main2_sg} for \eqref{eq:model}. Finally, we provide the applications of \Cref{thm:main2} and \Cref{thm:main2_sg}: The solvability of the Cauchy-Dirichlet problems.
%
%
\section{Preliminaries} \label{sec:pre}
In this section, we summarize some basic notations and gather definitions and known regularity results used throughout the paper. 

Since the standard scaling used in the uniformly parabolic equations cannot be applied to \eqref{eq:model}, it is necessary to define a cylinder and H\"older space accordingly.

For a point $(x_0,t_0) \in \mathbb{R}^{n+1}$ and $r>0$, we denote the intrinsic cylinder and upper cylinder as
\begin{align*}
	Q_r(x_0,t_0) &\coloneqq \{x \in  \mathbb{R}^n :|x-x_0|< r  \}  \times (t_0 -r^{2-\gamma} , t_0],\\
	Q_r^+(x_0,t_0) &\coloneqq Q_r(x_0,t_0) \cap \{ x_n>0\}. 
\end{align*}
For convenience, we denote $Q_r = Q_r(0,0)$ and $Q_r^+ = Q_r^+(0,0)$. 

Now let's define a more general domain. For a bounded domain $\Omega \subset \mathbb{R}^{n+1}$, we denote the upper domain and parabolic boundary as
\begin{align*}
	\Omega^+  &\coloneqq \{(x,t) \in \Omega:x_n>0 \} ,\\
	\partial_p \Omega &\coloneqq \{(x,t) \in \partial \Omega : Q_r(x,t) \cap \Omega^c \text{ is nonempty for all } r >0 \}.  
\end{align*}
In the special case $\Omega_T^+ \coloneqq \Omega^+ \times (0,T]$ for a bounded domain $\Omega \in \mathbb{R}^n$, we define the bottom, corner, and side as
\begin{align*}
	\partial_b \Omega^+_T \coloneqq \Omega^+  \times \{ t= 0\}, \quad
	\partial_c \Omega^+_T \coloneqq \partial \Omega^+ \times \{ t= 0 \}, \quad \text{and} \quad
	\partial_s \Omega^+_T \coloneqq  \partial \Omega^+ \times (0,T).
\end{align*}

\subsection{Hölder Spaces}
Defines the H\"older continuity of functions and domains used in the description of the theorem. Note that unlike commonly used H\"older continuity, it is H\"older continuity suitable for scaling in \eqref{eq:model}.
\begin{definition}[Hölder spaces]
	Let $\Omega \subset \mathbb{R}^{n+1}$ be an open set and $\alpha \in (0,1)$ with $\alpha \leq 1-\gamma$.
	\begin{enumerate} [label=(\roman*)]
		\item $u \in C^{\alpha}(\overline{\Omega})$ means that there exists $C>0$ such that 
		$$|u(x,t)-u(y,s)| \leq C(|x-y|+|t-s|^{\frac{1}{2-\gamma}})^{\alpha} \quad \text{for all } (x,t), (y,s) \in \Omega.$$
		In other words, $u$ is $\frac{\alpha}{2-\gamma}$-H\"older continuous in $t$ and  $\alpha$-H\"older continuous in $x$.
		\item $u \in C^{1,\alpha}(\overline{\Omega})$ means that $u$ is $\frac{1+\alpha}{2-\gamma}$-H\"older continuous in $t$ and $Du$ is $\alpha$-H\"older continuous in $x$.
		\item $u \in C^{2,\alpha}(\overline{\Omega})$ means that $u_t$ is $\frac{\alpha+\gamma}{2-\gamma}$-H\"older continuous in $t$ and $D^2u$ is $\alpha$-H\"older continuous in $x$.
	\item We denote $H^{\alpha}(\overline{\Omega}) \coloneqq C^{\alpha}(\overline{\Omega})$ and $H^{2,\alpha}(\overline{\Omega}) \coloneqq C^{2,\alpha}(\overline{\Omega})$ when $\gamma=0$.
 	\end{enumerate}
\end{definition}

\begin{definition}
	Let $A \in \mathbb{R}^{n+1}$ be a bounded set and $f : A \to \mathbb{R}$ be a function. We say that $f$ is $C^{k,\alpha}$ at $(x_0,t_0) \in A$ (denoted by $f \in C^{k,\alpha}(x_0,t_0)$), if there exist constant $C>0$, $r>0$, and a polynomial $P_f(x,t)$ with $\deg P_f \leq k$ such that 
	\begin{equation}\label{cka_f}
		|f(x,t)-P_f(x,t)| \leq C(|x-x_0|+|t-t_0|^{\frac{1}{2-\gamma}})^{k+\alpha} \quad \text{for all } (x,t) \in A \cap Q_r^+(x_0,t_0).
	\end{equation}
	We define 
	\begin{align*}
		[f]_{C^{k,\alpha}(x_0,t_0)} &\coloneqq \inf \{C>0 \mid \eqref{cka_f} \text{ holds with } P_f(x,t) \text{ and } C \}, \\
		\|f\|_{C^{k,\alpha}(x_0,t_0)} &\coloneqq [f]_{C^{k,\alpha} (x_0,t_0)} + \sum_{i=0}^k \|D^i P_f(x_0,t_0)\|.
	\end{align*}
\end{definition}

\begin{definition}
	Let $\Omega \subset \mathbb{R}^{n+1}$ be a bounded domain. We say that $\partial_p \Omega$ is $C^{k,\alpha}$ at $(x_0,t_0) \in \partial_p \Omega$ (denoted by $\partial_p \Omega \in C^{k,\alpha}(x_0,t_0)$), if there exist constant $C>0$, $r>0$, a new coordinate system $\{x_1,\cdots,x_n,t\}$, and a polynomial $P(x',t)$ with $\deg P \leq k$, $P(0',0)=0$, and $DP(0',0)=0$ such that $(x_0,t_0)=(0,0)$ in this coordinate system,
	\begin{equation}\label{cka_dom}
		\begin{aligned}
			&Q_\rho \cap \{ (x',x_n,t) \mid x_n > P(x',t) + C(|x'|+|t|^{\frac{1}{2-\gamma}})^{k+\alpha} \} \subset Q_r \cap \Omega, \\
			&Q_\rho \cap \{ (x',x_n,t) \mid x_n < P(x',t) - C(|x'|+|t|^{\frac{1}{2-\gamma}})^{k+\alpha} \} \subset Q_r \cap \Omega^c.
		\end{aligned}
	\end{equation}
	We define 
	\begin{align*}
		[\partial_p \Omega]_{C^{k,\alpha}(x_0,t_0)} &\coloneqq \inf \{C>0 \mid \eqref{cka_dom} \text{ holds with } P(x',t) \text{ and } C \}, \\
		\|\partial_p \Omega\|_{C^{k,\alpha}(x_0,t_0)} &\coloneqq [\partial_p \Omega]_{C^{k,\alpha} (x_0,t_0)}  + \sum_{i=2}^k \|D^i P(0',0)\|.
	\end{align*}
Moreover, if $\partial_p \Omega \in C^{k,\alpha}(x,t)$ for all $(x,t) \in \partial_p \Omega$, we define 
\begin{equation*}
	\|\partial_p \Omega\|_{C^{k,\alpha}}\coloneqq \sup_{(x,t)\in\partial_p \Omega} \|\partial_p \Omega\|_{C^{k,\alpha}(x,t)}.
\end{equation*}
\end{definition}

\subsection{Viscosity Solutions}
We now introduce the concept of solutions of fully nonlinear parabolic equations:
\begin{equation}\label{eq:op_F}
	u_t - F(D^2u,Du,x,t) = 0,
\end{equation}
where we assume that the fully nonlinear operator $F:\mathcal{S}^n \times \mathbb{R}^n \times \Omega \to \mathbb{R}$ is continuous and satisfies the following condition:
\begin{equation*}
	M \leq N \quad \Longrightarrow \quad F(M,p,x,t) \leq  F(N,p,x,t). 
\end{equation*}

\begin{definition} [Test functions]
	Let $u$ be a continuous function in $\Omega$. The function $\varphi : \Omega \to \mathbb{R}$ is called \textit{test function} if it is $C^1$ with respect to $t$ and $C^2$ with respect to $x$.
	\begin{enumerate} [label=(\roman*)]
		\item We say that the test function $\varphi$ touches $u$ from above at $(x,t)$ if there exists an open neighborhood $U$ of $(x,t)$ such that 
		$$u \leq \varphi  \quad \mbox{in } U \qquad  \mbox{and} \qquad u(x,t) = \varphi(x,t). $$
		\item We say that the test function $\varphi$ touches $u$ from below at $(x,t)$ if there exists an open neighborhood $U$ of $(x,t)$ such that 
		$$u \ge \varphi  \quad \mbox{in } U \qquad  \mbox{and} \qquad u(x,t) = \varphi(x,t). $$
	\end{enumerate}
\end{definition}

\begin{definition}[Viscosity solutions] \label{vis1}
	Let $F:\mathcal{S}^n \times \mathbb{R}^n \times \Omega \to \mathbb{R}$ be a fully nonlinear parabolic operator.
	\begin{enumerate} [label=(\roman*)]
		\item Let $u$ be a upper semicontinuous function in $\Omega$. $u$ is called a \textit{viscosity subsoution} of \eqref{eq:op_F} when the following condition holds: if for any $(x,t) \in \Omega $ and any test function $\varphi$ touching $u$ from above at $(x,t)$, then
		\begin{equation*}
			\varphi_t (x,t) - F ( D^2 \varphi (x,t),D \varphi (x,t),x,t ) \leq f(x,t).
		\end{equation*}
		\item Let $u$ be a lower semicontinuous function in $\Omega$. $u$ is called a \textit{viscosity supersoution} of \eqref{eq:op_F} when the following condition holds: if for any $(x,t) \in \Omega $ and any test function $\varphi$ touching $u$ from below at $(x,t)$, then
		\begin{equation*}
			\varphi_t (x,t) - F ( D^2 \varphi (x,t),D \varphi (x,t),x,t ) \ge f(x,t).
		\end{equation*}
	\end{enumerate}
\end{definition}
We introduce the Pucci's extremal operators to define the solutions class.
\begin{definition} [Pucci's extremal operators] \label{def_pucci}
Given elliptictity constants $0<\lambda \leq \Lambda$ and $\mathcal{S}^n \coloneqq \{ M : \text{$M$ is an $n\times n$ real symmetric matrix\}}$, we define \textit{Pucci's extremal operators} as follows:
\begin{align*}
	\mathcal{M}_{\lambda, \Lambda}^{+}(M)  \coloneqq \sup_{\lambda I \leq A \leq \Lambda I} \text{tr} (AM)  	\qquad \text{and} \qquad
	\mathcal{M}^{-}_{\lambda, \Lambda}(M)  \coloneqq \inf_{\lambda I \leq A \leq \Lambda I} \text{tr} (AM).
\end{align*}
\end{definition}

\begin{definition}[Solutions classes]
	Let $f$ be a continuous bounded function in $\Omega \subset \mathbb{R}^n_+\times \mathbb{R}$. We define several solutions classes as follows:
	\begin{align*}
		\underline{S}(\lambda, \Lambda,f ) &\coloneqq  \{u \in C(\Omega)\mid u_t - x_n^{\gamma}\mathcal{M}^{+}_{\lambda, \Lambda}(D^2u) \leq f \ \text{in the viscosity sense}  \}, \\
		\overline{S}(\lambda, \Lambda,f ) &\coloneqq   \{u \in C(\Omega)\mid u_t -x_n^{\gamma} \mathcal{M}^{-}_{\lambda, \Lambda}(D^2u) \ge f  \ \text{in the viscosity sense}   \}, \\
		S(\lambda,\Lambda, f) &\coloneqq \underline{S}(\lambda, \Lambda,f )  \cap \overline{S}(\lambda, \Lambda,f ), \\
		S^*(\lambda,\Lambda, f) &\coloneqq \underline{S}(\lambda, \Lambda, \|f\|_{L^{\infty}(\Omega)})  \cap \overline{S}(\lambda, \Lambda,-\|f\|_{L^{\infty}(\Omega)}).  
	\end{align*}
	We call the functions in $S(\lambda, \Lambda,f )$ solutions. Clearly, we have $S(\lambda, \Lambda, f)$ is contained in $S^{\ast}(\lambda, \Lambda, f)$ and $S^*(\lambda, \Lambda, 0)=S(\lambda, \Lambda, 0)$.
\end{definition}

\begin{remark}
	If $u \in C(\Omega)$ is a viscosity solution of \eqref{eq:model} with ellipticity constants $\lambda$ and $\Lambda$, then we know that $u \in S(\lambda, \Lambda, f(x,t)+x_n^{\gamma}F(O_n, x,t))$ in $\Omega$.  \end{remark}
\subsection{Structure Hypotheses on the operator $F$} \label{sec:shf}
We assume that the fully nonlinear operator $F:\mathcal{S}^n \times \overline{\Omega^+} \to \mathbb{R}$ satisfies some of the following conditions:
\begin{enumerate} [label=\text{(F\arabic*)}]
\item \label{F1} $F$ is uniformly parabolic; that is,  there exist constants $0<\lambda \leq \Lambda$ such that \begin{equation*}
	\mathcal{M}^{-}_{\lambda, \Lambda}(M-N) \leq F(M,x,t) - F(N,x,t) \leq \mathcal{M}^{+}_{\lambda, \Lambda}(M-N)
\end{equation*}
for all $M,N \in \mathcal{S}^n, \, (x,t) \in\overline{\Omega^+}$.
\item \label{F2} $F$ is concave or convex.
\item \label{F3} Let $\alpha \in (0,\overline{\alpha})$ and $(x_0,t_0)\in \overline{\Omega^+}$. There exist a constant $r_0>0$ and two nonnegative functions $\beta^1,\beta^2 \in C^{\alpha}( \overline{\Omega^+})$  such that $\beta^1(x_0,t_0)=0$, $\beta^2 (x_0,t_0)=0$, and
\begin{equation*}
	|F(M,x,t)-F(M,x_0,t_0)| \leq \beta^1(x,t) \|M\| + \beta^2(x,t) 
\end{equation*}
for all $M\in\mathcal{S}^n$, $(x,t) \in \overline{\Omega^+} \cap \overline{Q_{r_0}^+(x_0,t_0)}$.
\item \label{F3'} Let $\alpha \in (0,\overline{\alpha})$ and $(x_0,t_0)\in \overline{\Omega^+}$. There exist a constant $r_0>0$ and two nonnegative functions $\beta^1,\beta^2 \in H^{\alpha}(\overline{\Omega^+})$  such that $\beta^1(x_0,t_0)=0$, $\beta^2 (x_0,t_0)=0$, and
\begin{equation*}
	|F(M,x,t)-F(M,x_0,t_0)| \leq \beta^1(x,t) \|M\| + \beta^2(x,t) 
\end{equation*}
for all $M\in\mathcal{S}^n$, $(x,t) \in \overline{\Omega^+} \cap \overline{Q_{r_0}^+(x_0,t_0)}$.
\item \label{F4} Let $(x_0,t_0)\in \Omega$. There exist constants $r_0>0$, $C_\theta>0$ and two nonnegative functions $\theta$ satisfying $\theta(x_0,t_0)=0$ and 
\begin{equation*}
	\left(\frac{1}{|Q_{r}^+(x_0,t_0)|} \int_{Q_{r}^+(x_0,t_0)} |\theta(x,t)|^{n+1}\,dx\,dt\right)^{1/(n+1)} \leq C_\theta r^{\alpha} \quad \text{for all } r \leq r_0
\end{equation*}
such that
\begin{equation*}
	|F(M,x,t)-F(M,x_0,t_0)| \leq \theta(x,t) (\|M\|+1) 
\end{equation*}
for all $M\in\mathcal{S}^n$, $(x,t) \in \Omega^+ \cap Q_{r_0}^+(x_0,t_0)$. 
\end{enumerate}

\begin{remark}
Below are some comments regarding the structural hypotheses on $F$.
\begin{enumerate}  [label=(\roman*)]
	\item \ref{F1}, \ref{F2}, and \ref{F3} are conditions for boundary $C^{2,\alpha}$-regularity.
	\item \ref{F2} can be reduced to the condition on $F$ such that \eqref{eq:model} has a solution $u \in C_{\textnormal{loc}}^2(\Omega^+)$.
	\item Regarding pointwise regularity, it suffices to have $\beta^1,\beta^2 \in C^{\alpha}( x_0,t_0)$ in  \ref{F3}.
	\item  \ref{F3'} is a condition for global $C^{2,\alpha}$-regularity when $\gamma<0$.
	\item \ref{F4} is a condition for global $C^{1,\alpha}$-regularity. 
	\item \ref{F4} is satisfied if $F(\cdot,x) \in C^{\alpha}(x_0,t_0)$.
\end{enumerate}
\end{remark}
\subsection{Known Regularity Results}
Here we gather some known regularity results that we will need later on. 
\subsubsection{Interior Harnack Inequality}
For a given $\varepsilon \in (0,1)$, the function $u \in S^*(\lambda,\Lambda, f)$ in $\Omega \ssubset \{(x,t) \in Q_1^+ : x_n \ge \varepsilon\}$ satisfies 
\begin{equation*} 
\left\{\begin{aligned} 
	 u_t  -\mathcal{M}^{-}_{\varepsilon^{\gamma} \lambda ,\Lambda}(D^2 u) & \ge  -\|f\|_{L^{\infty}(Q_1^+)} \\
	 u_t  -\mathcal{M}^{+}_{\varepsilon^{\gamma} \lambda ,\Lambda}(D^2 u) & \leq \|f\|_{L^{\infty}(Q_1^+)} 
\end{aligned}\right. \quad \text{in } \Omega
\end{equation*}
when $0 < \gamma <1$ and
\begin{equation*} 
\left\{\begin{aligned} 
	 u_t  -\mathcal{M}^{-}_{\lambda ,\varepsilon^{\gamma}\Lambda}(D^2 u) & \ge  -\|f\|_{L^{\infty}(Q_1^+)} \\
	 u_t  -\mathcal{M}^{+}_{\lambda ,\varepsilon^{\gamma}\Lambda}(D^2 u) & \leq \|f\|_{L^{\infty}(Q_1^+)} 
\end{aligned}\right. \quad \text{in } \Omega
\end{equation*}
when $\gamma \leq 0$. In other words, in a domain sufficiently far away from the boundary $\{(x,t) \in \partial_p Q_1^+:x_n=0\}$, $u$ belongs to the solutions class for certain uniformly parabolic equations, and hence we obtain the following Interior Harnack Inequality.
\begin{theorem} [Interior Harnack inequality, {\cite[Theorem 4.18]{Wan92a}}] \label{int_harnack}
Let $u \in S^*(\lambda, \Lambda,f )$ be a nonnegative function in $Q_1^+$. Suppose that $\Omega\coloneqq B_r(a) 
\times (-3r^{2-\gamma},0]$ is compactly contained in $Q_1^+$ and $f \in C(Q_1^+) \cap L^{n+1}(Q_1^+)$. Then 
\begin{equation*}
	\sup_{B_r(a) \times (-3r^{2-\gamma}, -2r^{2-\gamma}]} u \leq C \left(\inf_{B_r(a) \times (-r^{2-\gamma},0]} u + \| f \|_{L^{n+1}(Q_1^+)} \right),
\end{equation*}
where $C$ is a constant depending only on $n$, $\lambda$, $\Lambda$, $\gamma$, and $d \coloneqq \textnormal{dist}\,(\Omega,\partial_p Q_1^+)$. 
\end{theorem}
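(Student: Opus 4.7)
The plan is to reduce the degenerate/singular inclusion to a classical uniformly parabolic one on the compactly contained subdomain $\Omega$, and then invoke Wang's classical parabolic Krylov--Safonov Harnack inequality \cite[Theorem 4.18]{Wan92a}. The key observation is that on $\Omega$ the weight $x_n^{\gamma}$ is pinched between two positive constants depending only on $d$ and $\gamma$, so the weighted equation becomes uniformly parabolic with constants independent of the radius $r$.

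More precisely, since $\Omega = B_r(a) \times (-3r^{2-\gamma}, 0] \ssubset Q_1^+$ with $d = \operatorname{dist}(\Omega, \partial_p Q_1^+) > 0$, every $(x,t) \in \Omega$ satisfies $\varepsilon \leq x_n \leq 1$ for some $\varepsilon = \varepsilon(d) > 0$. Using the identity $x_n^{\gamma} \mathcal{M}^{\pm}_{\lambda, \Lambda}(M) = \mathcal{M}^{\pm}_{x_n^{\gamma} \lambda,\, x_n^{\gamma} \Lambda}(M)$ together with the monotonicity of the Pucci operators in their ellipticity parameters, the two inequalities defining $u \in S^*(\lambda, \Lambda, f)$ promote on $\Omega$ to a classical uniformly parabolic inclusion in the Krylov--Safonov sense, with fixed ellipticity constants $(\lambda', \Lambda')$ depending only on $\lambda, \Lambda, \gamma, d$ -- precisely as in the two displayed systems immediately preceding the theorem statement.

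With the degeneracy/singularity absorbed, I would then apply the classical parabolic Harnack inequality to the cylinder $\Omega$. The only subtlety is the non-standard aspect ratio (time length $r^{2-\gamma}$ rather than $r^2$), which I would reconcile via the intrinsic rescaling $v(y, s) := u(a + r y,\, r^{2-\gamma} s)$; this carries $\Omega$ to the unit cylinder $B_1 \times (-3, 0]$, and tracking derivatives shows that $v$ satisfies a uniformly parabolic inequality at unit scale with ellipticity controlled by $\lambda, \Lambda, \gamma, d$ independently of $r$ (the factor $r^{-\gamma}$ introduced by time differentiation is absorbed by the pointwise bounds on $x_n^{\gamma}$ that persist after the change of variables). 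The classical unit-scale Harnack then yields the sup/inf comparison, and rescaling back produces the stated inequality on $\Omega$; the $L^{n+1}$ forcing-term error arises from the usual ABP step underlying the Krylov--Safonov theory.

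The main conceptual obstacle is reconciling the intrinsic parabolic time scale $r^{2-\gamma}$, natural for the weighted equation, with the standard parabolic scale $r^2$ of the classical theory. The compact containment $\Omega \ssubset Q_1^+$ is precisely what guarantees that this reconciliation can be performed with a constant depending only on $d$ and the fixed problem data, rather than on the scale $r$ or base point $a$ individually.
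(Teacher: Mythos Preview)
Your overall approach matches the paper's: observe that on $\Omega\ssubset Q_1^+$ the weight $x_n^{\gamma}$ is pinched between two positive constants depending only on $d$ and $\gamma$, so that $u$ belongs to a standard (unweighted) uniformly parabolic solution class with ellipticity $(\lambda',\Lambda')$ depending only on $n,\lambda,\Lambda,\gamma,d$, and then invoke \cite[Theorem~4.18]{Wan92a}. The paper gives no argument beyond the two displayed inclusions immediately preceding the theorem statement; it simply records the reduction and cites Wang.

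Your rescaling step, however, contains a gap. After the substitution $v(y,s)=u(a+ry,\,r^{2-\gamma}s)$ the subsolution inequality, for instance, becomes
\[
v_s - r^{-\gamma}(a_n+ry_n)^{\gamma}\,\mathcal{M}^{+}_{\lambda,\Lambda}(D^2 v)\ \leq\ r^{2-\gamma}\|f\|_{L^{\infty}(Q_1^+)},
\]
and since $a_n+ry_n\in[\varepsilon(d),1]$ is bounded \emph{independently of $r$}, the prefactor $r^{-\gamma}$ is \emph{not} absorbed: the effective ellipticity constants at unit scale are $r^{-\gamma}\lambda'$ and $r^{-\gamma}\Lambda'$, which genuinely depend on $r$. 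Because the Krylov--Safonov Harnack constant depends on $\lambda$ and $\Lambda$ individually and not merely on their ratio (equivalently, Wang's theorem is calibrated to cylinders of standard aspect ratio, whereas yours has time-height $r^{2-\gamma}$ rather than $r^2$), this rescaling does not by itself yield a bound uniform in $r$ for fixed $d$. The paper sidesteps the issue by citing Wang without detail, and in its only application (the Hopf principle, \Cref{hopf}) invokes the theorem for the fixed choice $a=e_n/2$, $r=1/4$, where no uniformity in $r$ is required.
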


\subsubsection{Comparison Principle}
We are interested in the comparison principle of \eqref{eq:model}. \cite[Theorem 8.2]{CIL92} provides a comparison principle for many kinds of fully nonlinear operators, but \eqref{eq:model} does not satisfy   \cite[Condition (3.14)]{CIL92}. To demonstrate the comparison principle of \eqref{eq:model}, we introduce semijets and theorems suggested in \cite[Section 8]{CIL92}.
\begin{definition}[Parabolic semijets]\label{semijet}
	 Let $u$ be a function defined in $Q_1^+$ and let $(x, t)\in Q_1^+$.
	\begin{enumerate}[label=(\roman*)]
		\item A \textit{parabolic superjet} $\mathscr{P}^{2, +}u(x, t)$ consists of $(a, p, M) \in \mathbb{R} \times \mathbb{R}^n \times \mathcal{S}^n$ which satisfy
		\begin{align*}
			u(y, s) &\leq u(x,t)+a(s-t)+p \cdot(y-x) \\
			&\qquad +\frac{1}{2} (y-x)^T M(y-x) +o(|s-t|+|z-x|^2) \quad \text{as $(y, s) \to (x, t)$}. 
		\end{align*}
		Similarly, we can define a \textit{parabolic subjet} $\mathscr{P}^{2, -}u(x, t)$. It immediately follows that
		\begin{align*}
			\mathscr{P}^{2, -}u(x, t)=-\mathscr{P}^{2, +}(-u)(x, t).
		\end{align*} 
	
	\item A \textit{limiting superjet} $\overline{\mathscr{P}}^{2, +}u(x, t)$ consists of $(a, p, M) \in \mathbb{R} \times \mathbb{R}^n \times \mathcal{S}^n$ with the following statement holds: there exists a sequence $\{(x_n, t_n, a_n, p_n, M_n)\}_{n=1}^{\infty}$ such that $(a_n, p_n, M_n) \in \mathscr{P}^{2, +}u(x_n, t_n)$ and 
	\begin{equation*}
		\text{$(x_n, t_n, u(x_n, t_n), a_n, p_n, M_n) \to (x, t, u(x, t), a, p, M)$} \quad \text{as } n \to \infty.
	\end{equation*}
	We define a \textit{parabolic subjet} $\overline{\mathscr{P}}^{2, -}u(x, t)$ in a similar way.
	\end{enumerate}	
\end{definition}
\begin{theorem}[Jensen Ishii's Lemma,  {\cite[Theorem 8.3]{CIL92}}, {\cite[Lemma 2.3.23]{BEG13}}]\label{lem:ishii}
Let $U$ and $V$ be two open sets of $\mathbb{R}^n_+$ and $I$ an open interval of $\mathbb{R}$. Consider also a viscosity subsolution $u$ of \eqref{eq:op_F} in $U\times I$ and a viscosity supersolution $v$ of \eqref{eq:op_F} in $V\times I$. Suppose that 
\begin{equation} \label{ishii_fcn}
	w(x,y,t)\coloneqq u(x,t)-v(y,t) - \frac{1}{2\varepsilon} |x-y|^2
\end{equation} 
has a local maximum at $(x_\varepsilon,y_\varepsilon,t_\varepsilon) \in U\times V \times I$. Then there exists $a \in \mathbb{R}$ and $M,N \in \mathcal{S}^n$ such that $(a,p,M) \in \overline{\mathscr{P}}^{2, +}u(x_\varepsilon, t_\varepsilon)$, $(a,p,N) \in \overline{\mathscr{P}}^{2, -}v(y_\varepsilon, t_\varepsilon)$, and 
\begin{equation*}
	-\frac{2}{\varepsilon} 
		\begin{pmatrix}
			I_n & O_n  \\
			O_n & I_n
		\end{pmatrix} \leq 
		\begin{pmatrix}
			M & O_n  \\
			O_n & -N
		\end{pmatrix} \leq  \frac{3}{\varepsilon} 
		\begin{pmatrix}
			I_n & -I_n  \\
			-I_n & I_n
		\end{pmatrix},
\end{equation*}
where $p \coloneqq \varepsilon^{-1}(x_\varepsilon-y_\varepsilon)$, $I_{n}$ and $O_{n}$ are the indentity matrix and zero matrix, respectively.
\end{theorem}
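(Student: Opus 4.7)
The plan is to follow the classical route to the Crandall–Ishii lemma adapted to the parabolic setting: regularize $u$ and $v$ by sup/inf-convolution, extract pointwise second-order information via an Alexandrov/Jensen argument, derive the matrix inequality by a finite-dimensional linear algebra computation coming from the penalty term, and close by passing to the limit using the fact that $\overline{\mathscr{P}}^{2,\pm}$ are closed.

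First I would add a smooth localizing perturbation so that $(x_\varepsilon,y_\varepsilon,t_\varepsilon)$ is a strict local maximum of $w$ in a small closed neighborhood, and then pass to the sup-convolution
\begin{equation*}
u^\tau(x,t)\coloneqq \sup_{y\in U}\Bigl\{u(y,t)-\tfrac{1}{2\tau}|x-y|^2\Bigr\},
\qquad
v_\tau(y,t)\coloneqq \inf_{x\in V}\Bigl\{v(x,t)+\tfrac{1}{2\tau}|x-y|^2\Bigr\}.
\end{equation*}
For $\tau>0$ small, $u^\tau$ is semiconvex in $x$ with $D^2_x u^\tau\geq -\tau^{-1}I_n$ (and $v_\tau$ is correspondingly semiconcave), both are locally Lipschitz in $t$, $u^\tau\downarrow u$, $v_\tau\uparrow v$ locally uniformly, and they remain viscosity sub/supersolutions of slightly modified equations on a slightly smaller set. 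The maximum points $(x_\varepsilon^\tau,y_\varepsilon^\tau,t_\varepsilon^\tau)$ of the regularized doubled function converge to $(x_\varepsilon,y_\varepsilon,t_\varepsilon)$.

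Next, by Alexandrov's theorem combined with Jensen's perturbation lemma (applied to the semiconvex and semiconcave functions in the spatial variables, with $t$ treated as a parameter and then a nearby time slice chosen where the needed differentiability holds), I can find a point arbitrarily close to $(x_\varepsilon^\tau,y_\varepsilon^\tau,t_\varepsilon^\tau)$ at which $u^\tau$ and $v_\tau$ admit second-order Taylor expansions in $x$ and share a common $t$-slope (call it $a^\tau$). At such a point, the second-order maximum condition for the regularized doubled function yields
\begin{equation*}
\begin{pmatrix} D^2_x u^\tau & 0 \\ 0 & -D^2_y v_\tau \end{pmatrix}
\leq \frac{1}{\varepsilon}\begin{pmatrix} I_n & -I_n \\ -I_n & I_n \end{pmatrix},
\end{equation*}
and combining this with the semiconvexity/semiconcavity bounds via the standard matrix-algebra lemma of Crandall–Ishii produces matrices $M^\tau,N^\tau$ with
\begin{equation*}
-\tfrac{2}{\varepsilon}\begin{pmatrix} I_n & 0 \\ 0 & I_n \end{pmatrix}
\leq \begin{pmatrix} M^\tau & 0 \\ 0 & -N^\tau \end{pmatrix}
\leq \tfrac{3}{\varepsilon}\begin{pmatrix} I_n & -I_n \\ -I_n & I_n \end{pmatrix},
\end{equation*}
and with $(a^\tau,p^\tau,M^\tau)\in\mathscr{P}^{2,+}u^\tau$, $(a^\tau,p^\tau,N^\tau)\in\mathscr{P}^{2,-}v_\tau$ at the respective points.

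Finally, I let $\tau\to 0$. Locally uniform convergence of $u^\tau\to u$ and $v_\tau\to v$, together with uniform bounds on $a^\tau,p^\tau,M^\tau,N^\tau$ coming from the $\varepsilon$-dependent estimates above, allow extraction of a subsequence converging to $(a,p,M,N)$; closedness of $\overline{\mathscr{P}}^{2,+}$ and $\overline{\mathscr{P}}^{2,-}$ then transfers the superjet/subjet membership to $u$ and $v$ at $(x_\varepsilon,t_\varepsilon)$ and $(y_\varepsilon,t_\varepsilon)$, while the matrix inequality passes to the limit. The main obstacle, as compared with the purely elliptic case, is the parabolic asymmetry: time is \emph{shared} between $u$ and $v$ rather than doubled. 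This forces the time-slope $a$ to be common to both jets, which is precisely what must be arranged through the Jensen perturbation step, since one cannot independently control the $t$-derivatives of $u^\tau$ and $v_\tau$ at generic points — the identification relies on the fact that at a maximum of $w$ in $t$ one has $\partial_t u^\tau(\cdot,t_\varepsilon^\tau)=\partial_t v_\tau(\cdot,t_\varepsilon^\tau)$ almost everywhere, which is the delicate technical input carried over from [CIL92, Section 8].
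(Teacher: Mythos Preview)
The paper does not supply a proof of this theorem: it is quoted directly from the literature (Crandall--Ishii--Lions \cite[Theorem 8.3]{CIL92} and \cite[Lemma 2.3.23]{BEG13}) and used as a black box in the subsequent comparison principle. Your sketch follows the standard Crandall--Ishii route of sup/inf-convolution, Alexandrov--Jensen pointwise differentiability, and the matrix-algebra lemma for the doubled-variable penalty, which is exactly the argument in the cited references; there is nothing in the paper to compare it against.
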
 
\begin{lemma} [{\cite[Lemma 3.1]{CIL92}}, {\cite[Lemma 2.3.19]{BEG13}}] \label{lem:mu_ep}
	Let $u$ be a upper semicontinuous in $\overline{Q_1^+}$ and $v$ be a lower semicontinuous in $\overline{Q_1^+}$. Assume that 
	\begin{equation*}
		\mu_{\varepsilon} \coloneqq \sup_{(x,t),(y,s) \in \overline{Q_1^+}} \left(u(x,t)-v(y,s) -\frac{1}{2\varepsilon}|x-y|^2   -\frac{1}{2\varepsilon}|t-s|^2 \right) < \infty
	\end{equation*}
	for small $\varepsilon>0$ and $\{(x_\varepsilon, t_\varepsilon, y_\varepsilon,s_\varepsilon)\}$ satisfies that 
	\begin{equation*}
		\lim_{\varepsilon \to 0} \left( \mu_{\varepsilon} - u(x_\varepsilon,t_\varepsilon)+v(y_\varepsilon,s_\varepsilon) + \frac{1}{2\varepsilon}|x_\varepsilon-y_\varepsilon|^2 + \frac{1}{2\varepsilon}|t_\varepsilon-s_\varepsilon|^2\right)=0.
	\end{equation*}
	Then the following statements hold:
	\begin{enumerate}[label=(\roman*)]
	\item $\displaystyle\lim_{\varepsilon \to 0} {\varepsilon}^{-1} |x_\varepsilon - y_\varepsilon|^2 =0$ and $\displaystyle\lim_{\varepsilon \to 0} {\varepsilon}^{-1} |t_\varepsilon - s_\varepsilon|^2 =0$,
	\item $\displaystyle\lim_{\varepsilon \to 0} \mu_{\varepsilon} = u(\overline{x},\overline{t})-v(\overline{x},\overline{t}) = \sup_{Q_1^+}(u-v)$ whenever $(\overline{x},\overline{t}) \in \overline{Q_1^+}$ is a limit point of $\{(x_\varepsilon, t_\varepsilon)\}$ as $\varepsilon \to 0$.
	\end{enumerate}
\end{lemma}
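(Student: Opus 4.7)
The plan is to combine a monotonicity argument for $\mu_\varepsilon$ with a compactness extraction and the semicontinuity of $u$ and $v$. First, set $M \coloneqq \sup_{\overline{Q_1^+}}(u-v)$, which is finite because $u-v$ is upper semicontinuous on the compact set $\overline{Q_1^+}$. Choosing $x = y$ and $t = s$ in the definition of $\mu_\varepsilon$ shows $\mu_\varepsilon \ge M$ for every admissible $\varepsilon$, and shrinking $\varepsilon$ only enlarges the penalty $\tfrac{1}{2\varepsilon}(|x-y|^2 + |t-s|^2)$, so $\mu_\varepsilon$ is monotone nonincreasing as $\varepsilon \to 0^+$. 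Therefore $\mu_0 \coloneqq \lim_{\varepsilon \to 0^+} \mu_\varepsilon$ exists and satisfies $\mu_0 \ge M$.

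Next, introduce the penalty value $\delta_\varepsilon \coloneqq \tfrac{1}{2\varepsilon}(|x_\varepsilon - y_\varepsilon|^2 + |t_\varepsilon - s_\varepsilon|^2)$. The near-maximum hypothesis is exactly
\begin{equation*}
\mu_\varepsilon + \delta_\varepsilon = u(x_\varepsilon,t_\varepsilon) - v(y_\varepsilon,s_\varepsilon) + o(1) \quad \text{as } \varepsilon \to 0.
\end{equation*}
Since $u$ is bounded above and $v$ is bounded below on the compact set $\overline{Q_1^+}$, the right-hand side is bounded, so together with $\mu_\varepsilon \ge M$ this yields an a priori bound $\delta_\varepsilon \le C$ independent of $\varepsilon$. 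In particular $|x_\varepsilon - y_\varepsilon|^2 + |t_\varepsilon - s_\varepsilon|^2 \le 2C\varepsilon \to 0$, so every accumulation point of the quadruple $\{(x_\varepsilon, t_\varepsilon, y_\varepsilon, s_\varepsilon)\}$ in $\overline{Q_1^+} \times \overline{Q_1^+}$ lies on the diagonal.

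Now let $(\overline{x},\overline{t}) \in \overline{Q_1^+}$ be a limit point of $\{(x_\varepsilon,t_\varepsilon)\}$ and extract a subsequence along which $(x_\varepsilon,t_\varepsilon) \to (\overline{x},\overline{t})$; by the previous paragraph the corresponding $(y_\varepsilon,s_\varepsilon)$ converges to the same point. Upper semicontinuity of $u$ and lower semicontinuity of $v$ give
\begin{equation*}
\limsup_{\varepsilon \to 0} \bigl(u(x_\varepsilon,t_\varepsilon) - v(y_\varepsilon,s_\varepsilon)\bigr) \le u(\overline{x},\overline{t}) - v(\overline{x},\overline{t}) \le M.
\end{equation*}
Combining this with $\mu_\varepsilon + \delta_\varepsilon = u(x_\varepsilon,t_\varepsilon) - v(y_\varepsilon,s_\varepsilon) + o(1)$ and with the inequalities $\mu_\varepsilon \ge M$ and $\delta_\varepsilon \ge 0$ forces $\mu_\varepsilon + \delta_\varepsilon \to M$ along the subsequence; equality must therefore hold throughout, which simultaneously gives $\mu_\varepsilon \to M$, $\delta_\varepsilon \to 0$, and $u(\overline{x},\overline{t}) - v(\overline{x},\overline{t}) = M$. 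Because $\mu_\varepsilon$ is a deterministic monotone function of $\varepsilon$, its limit $\mu_0 = M$ is independent of the subsequence, and applying the same argument to arbitrary further subsequences promotes $\delta_\varepsilon \to 0$ to the full sequence $\{(x_\varepsilon, t_\varepsilon, y_\varepsilon, s_\varepsilon)\}$, establishing (i) and (ii).

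The only delicate point is the bookkeeping in the last step: verifying that $\delta_\varepsilon \to 0$ along the \emph{entire} sequence rather than just along the subsequence producing $(\overline{x},\overline{t})$. This is resolved cleanly by exploiting that $\mu_\varepsilon$ depends only on $\varepsilon$ and has a monotone limit equal to $M$, while $\delta_\varepsilon$ is squeezed between $0$ and $u(x_\varepsilon,t_\varepsilon) - v(y_\varepsilon,s_\varepsilon) - \mu_\varepsilon + o(1)$, whose upper bound tends to zero along every convergent subsequence.
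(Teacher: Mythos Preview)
The paper does not supply its own proof of this lemma; it is stated with a citation to \cite[Lemma 3.1]{CIL92} and \cite[Lemma 2.3.19]{BEG13} and then used directly in the proof of the comparison principle. Your argument is correct and is essentially the standard proof one finds in those references: monotonicity of $\mu_\varepsilon$ in $\varepsilon$, the trivial lower bound $\mu_\varepsilon \ge M$ from the diagonal, a uniform bound on the penalty $\delta_\varepsilon$ from the boundedness of $u$ above and $v$ below, and then a semicontinuity squeeze along convergent subsequences to force $\mu_0 = M$, $\delta_\varepsilon \to 0$, and $u(\overline{x},\overline{t}) - v(\overline{x},\overline{t}) = M$. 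The final sub-subsequence argument promoting $\delta_\varepsilon \to 0$ to the full family is handled correctly.
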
 
We are now ready to prove the comparison principle for \eqref{eq:model}.
\begin{theorem}[Comparison principle]\label{com_prin}
Let $u \in C(\overline{Q_1^+})$ and $v\in C(\overline{Q_1^+})$ be a viscosity subsolution and viscosity supersolution of \eqref{eq:model} in $Q_1^+$, respectively. If $u \leq v$ on $\partial_p Q_1^+$, then $u \leq v$ in $Q_1^+$.
\end{theorem}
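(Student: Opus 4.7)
The plan is to adapt the Jensen--Ishii doubling-of-variables method to the degenerate/singular framework of \eqref{eq:model}. The central observation, noted by the authors just above the statement, is that \cite[Condition~(3.14)]{CIL92} fails for our operator because of the coefficient $x_n^\gamma$; however, this coefficient degenerates (or blows up) only on $\{x_n=0\}\subset\partial_p Q_1^+$, where we already have $u\le v$. Hence the strategy is to force the relevant doubling points strictly into the interior region $\{x_n>0\}$, on which the map $(M,x,t)\mapsto x_n^\gamma F(M,x,t)$ behaves as a classically uniformly parabolic operator with continuous coefficients, and there carry out a CIL-type contradiction using the tools assembled in this section.

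Argue by contradiction, assuming $\delta_0\coloneqq\sup_{\overline{Q_1^+}}(u-v)>0$. Replace $u$ by the strict-subsolution perturbation $\tilde u(x,t)\coloneqq u(x,t)-\eta/(T_0-t)$, where $T_0>0$ is slightly larger than the top time $t=0$ of $Q_1^+$ and $\eta>0$ is small. A direct computation gives
\[
\tilde u_t - x_n^\gamma F(D^2\tilde u,x,t) \le f - \eta/T_0^2 \quad\text{in } Q_1^+, \qquad \tilde u \le v - \eta/T_0 < v \quad\text{on } \partial_p Q_1^+,
\]
while $\sup_{\overline{Q_1^+}}(\tilde u-v)\ge \delta_0/2$. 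Consequently the supremum is attained at some $(\bar x,\bar t)$ with $\bar x_n\ge 2\delta>0$. Double the variables: for small $\varepsilon>0$, let $(x_\varepsilon,y_\varepsilon,t_\varepsilon,s_\varepsilon)$ maximise
\[
\Phi_\varepsilon(x,y,t,s) \coloneqq \tilde u(x,t) - v(y,s) - \frac{|x-y|^2+|t-s|^2}{2\varepsilon}
\]
on $\overline{Q_1^+}\times\overline{Q_1^+}$. Lemma~\ref{lem:mu_ep} provides a subsequence along which $(x_\varepsilon,t_\varepsilon),(y_\varepsilon,s_\varepsilon)\to(\bar x,\bar t)$ and $\varepsilon^{-1}(|x_\varepsilon-y_\varepsilon|^2+|t_\varepsilon-s_\varepsilon|^2)\to 0$; in particular $x_{\varepsilon,n},y_{\varepsilon,n}\ge\delta$ for all small $\varepsilon$. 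Apply Jensen--Ishii's Lemma (Theorem~\ref{lem:ishii}) to obtain a common $a=(t_\varepsilon-s_\varepsilon)/\varepsilon$ (so $|a|=o(1/\sqrt\varepsilon)$), $p_\varepsilon=(x_\varepsilon-y_\varepsilon)/\varepsilon$, and $M,N\in\mathcal S^n$ with $M\le N$ such that $(a,p_\varepsilon,M)\in\overline{\mathscr P}^{2,+}\tilde u(x_\varepsilon,t_\varepsilon)$ and $(a,p_\varepsilon,N)\in\overline{\mathscr P}^{2,-}v(y_\varepsilon,s_\varepsilon)$. Substituting into the strict sub- and supersolution inequalities and subtracting yields
\[
y_{\varepsilon,n}^\gamma F(N,y_\varepsilon,s_\varepsilon) - x_{\varepsilon,n}^\gamma F(M,x_\varepsilon,t_\varepsilon) \le f(x_\varepsilon,t_\varepsilon) - f(y_\varepsilon,s_\varepsilon) - \eta/T_0^2,
\]
whose right-hand side tends to $-\eta/T_0^2$ by continuity of $f$.

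The main obstacle is to establish the matching lower bound, namely that the left-hand side is $\ge -o(1)$, which then yields the contradiction $0\le -\eta/T_0^2$. Since Jensen--Ishii only guarantees $\|M\|,\|N\|=O(1/\varepsilon)$, a direct appeal to the continuity of $F$ in $(x,t)$ is not enough---this is precisely the manifestation of the failure of \cite[Condition~(3.14)]{CIL92} for our operator. The workaround is to exploit the interior confinement $x_{\varepsilon,n},y_{\varepsilon,n}\ge\delta>0$ together with the quantitative constraints forced by the sub- and supersolution inequalities: on $\{x_n\ge\delta\}$ the map $x_n\mapsto x_n^\gamma$ is Lipschitz, so $|x_{\varepsilon,n}^\gamma-y_{\varepsilon,n}^\gamma|=O(|x_\varepsilon-y_\varepsilon|)=o(\sqrt\varepsilon)$, and the common time-derivative $a$ satisfies $|a|=o(1/\sqrt\varepsilon)$. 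Combined with the uniform-parabolicity consequence $F(M,\cdot)\le F(N,\cdot)$ coming from $M\le N$, a careful splitting of the left-hand side into products of these small quantities, together with a localized CIL-type estimate on $\{x_n\ge\delta\}$ where $x_n^\gamma F$ is classically uniformly parabolic and $F$ is uniformly continuous on compact subsets of $\mathcal S^n\times\{x_n\ge\delta\}$, delivers the required $-o(1)$ lower bound and closes the contradiction.
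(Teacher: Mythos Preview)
Your strategy coincides with the paper's: perturb to a strict subsolution, double variables, apply Jensen--Ishii to obtain a common $a$ and matrices $M\le N$, then subtract the two viscosity inequalities. Two minor deviations are worth noting. First, the paper doubles only in space with a single time variable, so that \Cref{lem:ishii} applies verbatim; your space--time doubling requires a different version of the lemma than the one stated. Second, the paper uses the perturbation $u+\delta/t$, which tends to $-\infty$ as $t\to 0^-$ and therefore forces $t_\varepsilon$ strictly below the terminal time; your choice $-\eta/(T_0-t)$ stays bounded on $\overline{Q_1^+}$ and does not rule out $t_\varepsilon=0$ or $s_\varepsilon=0$, where \Cref{lem:ishii} (stated on an open time interval) is not directly available.

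The substantive gap is in your last paragraph. The ingredients you assemble---$|x_{\varepsilon,n}^\gamma-y_{\varepsilon,n}^\gamma|=o(\sqrt\varepsilon)$, $|a|=o(1/\sqrt\varepsilon)$, the inequality $F(M,\cdot)\le F(N,\cdot)$, and ``uniform continuity of $F$ on compact subsets of $\mathcal S^n$''---cannot by themselves produce the required $o(1)$ bound: Jensen--Ishii only gives $\|N\|=O(1/\varepsilon)$, so the product $(x_{\varepsilon,n}^\gamma-y_{\varepsilon,n}^\gamma)F(N,\cdot)$ is at best $o(1/\sqrt\varepsilon)$, and $N$ escapes every compact set in $\mathcal S^n$, rendering the continuity argument vacuous. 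The paper arrives at exactly this term and records it as tending to zero; what is actually behind this is not the consequence $M\le N$ alone but the \emph{full} $2n\times 2n$ matrix inequality in \Cref{lem:ishii}. Testing it with vectors $\bigl(x_{\varepsilon n}^{\gamma/2}\sigma\xi,\ y_{\varepsilon n}^{\gamma/2}\sigma\xi\bigr)$, where $\sigma\sigma^T=A$ and $\lambda I\le A\le\Lambda I$, gives
\[
x_{\varepsilon n}^{\gamma}\,\mathrm{tr}(AM)-y_{\varepsilon n}^{\gamma}\,\mathrm{tr}(AN)\ \le\ \frac{3n\Lambda}{\varepsilon}\bigl(x_{\varepsilon n}^{\gamma/2}-y_{\varepsilon n}^{\gamma/2}\bigr)^{2}\ \le\ \frac{C}{\varepsilon}\,|x_\varepsilon-y_\varepsilon|^{2}\ \longrightarrow\ 0,
\]
since $x_{\varepsilon n},y_{\varepsilon n}\ge\delta$ makes $z\mapsto z^{\gamma/2}$ Lipschitz. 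Passing through the $\sup/\inf$ representation of a uniformly elliptic $F$ (or directly, for the Pucci extremals to which the comparison principle is actually applied in \Cref{lem:u<x} and \Cref{hopf}) then yields the contradiction. Your ``careful splitting'' and ``localized CIL-type estimate'' should be replaced by precisely this computation; without it the argument does not close.
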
 

\begin{proof}
	We first observe that for $\delta>0$, $\tilde{u}=u + \delta/t$ is also viscosity subsolution of \eqref{eq:model}. Since $u \leq v$ in $Q_1^+$ follows from $\tilde{u} \leq v $ in $Q_1^+$ and letting $\delta \to 0$, we may assume that
	\begin{equation*}
		u_t - x_n^{\gamma} F(D^2 u,x,t) \leq  f - \delta \quad \text{in the viscosity sense}
	\end{equation*}
	and $u \to -\infty$ uniformly on $\overline{\{x\in \mathbb{R}_+^n: |x_i| \leq 1 \,(1\leq i \leq n)\}}$ as $t \to 0$.
	
	We argue by contradiction. Suppose that $m \coloneqq u(x_0,t_0)-v(x_0,t_0) = \sup_{Q_1^+}(u-v) >0$ for some $(x_0,t_0) \in Q_1^+$. Since $u$ is bounded above, $w$ defined as in \eqref{ishii_fcn} has the maximum $\mu_\varepsilon$  at $(x_\varepsilon,y_\varepsilon,t_\varepsilon)$. Furthermore, $\{(x_\varepsilon,y_\varepsilon)\}$ is a bounded sequence, there exists a subsequence $\{(x_{\varepsilon_j},y_{\varepsilon_j})\}$ such that $x_{\varepsilon_j} \to \overline{x}$ as $j \to \infty$ and $y_{\varepsilon_j} \to \overline{y}$ as $j \to \infty$. If $t_\varepsilon = -1$, by \Cref{lem:mu_ep}, we have $\overline{x} =\overline{y}$ and 
	\begin{align*}
		0<m \leq \lim_{j \to \infty} \mu_{\varepsilon_j} &\leq \lim_{\varepsilon_j \to 0}\sup_{(x,-1),(y,-1) \in Q_1^+} \left(u(x,-1)-v(y,-1) -\frac{1}{2\varepsilon_j}|x-y|^2  \right) \\
		&=u(\overline{x},-1) -y(\overline{x},-1) \leq \sup_{\partial_p Q_1^+}(u-v) \leq 0
	\end{align*}
	which is not possible. Similarly, we can see that $(x_\varepsilon,t_\varepsilon), (y_\varepsilon,t_\varepsilon) \in Q_1^+$ if $\varepsilon>0$ is sufficiently small. Thus we can apply \Cref{lem:ishii} to the point $(x_\varepsilon,y_\varepsilon,t_\varepsilon)$, there exists $a \in \mathbb{R}$ and $M,N \in \mathcal{S}^n$ such that $(a,p,M) \in \overline{\mathscr{P}}^{2, +}u(x_\varepsilon, t_\varepsilon)$, $(a,p,N) \in \overline{\mathscr{P}}^{2, -}v(y_\varepsilon, t_\varepsilon)$, and 
$M \leq N$, where $p \coloneqq \varepsilon^{-1}(x_\varepsilon-y_\varepsilon)$. These imply that

\begin{equation*}
	a -x_{\varepsilon n}^{\gamma} F(M,x_\varepsilon, t_\varepsilon) \leq f(x_\varepsilon, t_\varepsilon) -\delta \quad \text{and} \quad
	a -y_{\varepsilon n}^{\gamma} F(N,x_\varepsilon, t_\varepsilon) \ge f(y_\varepsilon, t_\varepsilon) , 
\end{equation*}
where $x_{\varepsilon n}$ is the $n$-th coordinate of $x_\varepsilon$. Since $\lim_{\varepsilon \to 0}\varepsilon^{-1} |x_\varepsilon - y_\varepsilon|^2 = 0$ and $M \leq N$, we have
\begin{align*}
	\delta &\leq  f(x_\varepsilon, t_\varepsilon) - f(y_\varepsilon, t_\varepsilon) + x_{\varepsilon n}^{\gamma} F(M,x_\varepsilon, t_\varepsilon)- y_{\varepsilon n}^{\gamma} F(N,x_\varepsilon, t_\varepsilon) \\
	&\leq f(x_\varepsilon, t_\varepsilon) - f(y_\varepsilon, t_\varepsilon) + (x_{\varepsilon n}^{\gamma} - y_{\varepsilon n}^{\gamma}) F(N,x_\varepsilon, t_\varepsilon) \to 0 \quad \text{as } \varepsilon \to 0
\end{align*}
which leads to a contradiction.
\end{proof}
\subsubsection{Existence of Viscosity Solutions}
If $u$ is bounded from below in $Q_1^+$, one can define the \textit{lower semicontinuous envelope} $u_*$ of $u$ as the largest lower semicontinuous function lying below $u$. Similarly, the \textit{upper semicontinuous envelope} $u^*$ of $u$ can be defined if $u$ is bounded from above in $Q_1^+$.
\begin{lemma} [{\cite[Lemma 2.3.15]{BEG13}}] \label{lem:perron}
	Let $u^- \in C(\overline{Q_1^+})$ and $u^+\in C(\overline{Q_1^+})$ be a viscosity subsolution and viscosity supersolution of \eqref{eq:model} in $Q_1^+$, respectively. Then there exists a function $u:Q_1^+ \to \mathbb{R}$ such that $u^-\leq u \leq u^+$, $u^*$ is a subsolution of \eqref{eq:model} in $Q_1^+$, and $u_*$ is a supersolution of \eqref{eq:model} in $Q_1^+$.
\end{lemma}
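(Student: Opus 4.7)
The plan is to invoke the classical Perron method of Ishii, which requires only continuity of the operator and closure of subsolutions under pointwise supremum and of supersolutions under pointwise infimum — all of which hold regardless of whether the equation is uniformly parabolic or carries the degenerate/singular weight $x_n^\gamma$, since the argument is pointwise and test points lie strictly inside $Q_1^+$ where $x_n>0$. Define
\[
\mathcal{F} \coloneqq \{w : Q_1^+ \to \mathbb{R} \mid w \text{ is a viscosity subsolution of \eqref{eq:model} with } u^- \leq w \leq u^+\},
\]
which is nonempty because $u^- \in \mathcal{F}$, and set $u(x,t) \coloneqq \sup_{w \in \mathcal{F}} w(x,t)$. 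By construction $u^- \leq u \leq u^+$ in $Q_1^+$.

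The first step is to show that the upper semicontinuous envelope $u^*$ is a subsolution. This is the standard stability property: if $\varphi$ touches $u^*$ strictly from above at $(x_0,t_0)\in Q_1^+$, one extracts sequences $(x_k,t_k)\to(x_0,t_0)$ and $w_k\in\mathcal{F}$ with $w_k(x_k,t_k)\to u^*(x_0,t_0)$, finds local maxima $(y_k,s_k)$ of $w_k-\varphi$ near $(x_0,t_0)$, applies the subsolution inequality for $w_k$ with test function $\varphi$ at $(y_k,s_k)$, and passes to the limit using continuity of $F$ in all arguments and the fact that $y_{k,n}^\gamma \to x_{0,n}^\gamma$ (a finite positive number because $x_{0,n}>0$).

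The heart of the argument is showing that the lower semicontinuous envelope $u_*$ is a supersolution, which I would do by contradiction following Ishii's bump construction. Suppose $u_*$ fails at some $(x_0,t_0)\in Q_1^+$: there is a test function $\varphi$ touching $u_*$ from below at $(x_0,t_0)$ with
\[
\varphi_t(x_0,t_0) - x_{0,n}^\gamma F(D^2\varphi(x_0,t_0), x_0, t_0) < f(x_0,t_0) - 3\delta
\]
for some $\delta>0$. By continuity of $F$, of $f$, and of the weight $x_n^\gamma$ in a neighborhood $Q_r(x_0,t_0)\Subset Q_1^+$ (where $x_n$ stays uniformly positive), the strict inequality persists with slack $2\delta$ on all of $Q_r(x_0,t_0)$. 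Consider the perturbation
\[
\varphi_\varepsilon(x,t) \coloneqq \varphi(x,t) + \varepsilon - c\bigl(|x-x_0|^2 + |t-t_0|^2\bigr),
\]
with $c>0$ chosen small enough that $\varphi_\varepsilon$ still satisfies the classical subsolution inequality on $Q_r(x_0,t_0)$, and then $\varepsilon>0$ chosen small enough that $\varphi_\varepsilon \leq u^+$ everywhere and $\varphi_\varepsilon < u_*$ on the annulus where $|x-x_0|^2+|t-t_0|^2 \geq (r/2)^2$. Setting $\tilde u \coloneqq \max(u,\varphi_\varepsilon)$ on $Q_{r/2}(x_0,t_0)$ and $\tilde u \coloneqq u$ elsewhere produces, by the standard gluing lemma, a viscosity subsolution with $u^- \leq \tilde u \leq u^+$. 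Since $u(x_k,t_k)\to u_*(x_0,t_0) = \varphi(x_0,t_0)$ along a sequence $(x_k,t_k)\to(x_0,t_0)$, we get $\tilde u(x_k,t_k) > u(x_k,t_k)$ for large $k$, contradicting the definition of $u$ as the supremum over $\mathcal{F}$.

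The main obstacle is the bump construction near the boundary $\{x_n=0\}$, where the weight either vanishes (for $\gamma>0$) or blows up (for $\gamma<0$). However, the supersolution failure is tested at an interior point $(x_0,t_0)\in Q_1^+$, so $x_{0,n}>0$ and the weight is continuous and bounded above and below on a small cylinder $Q_r(x_0,t_0)\Subset Q_1^+$. In particular all constants in the perturbation argument can be chosen uniformly on this cylinder, and the standard proof of \cite[Lemma 2.3.15]{BEG13} adapts without modification; no structural assumption on $F$ beyond continuity and degenerate ellipticity is used.
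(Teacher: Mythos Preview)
The paper does not give its own proof of this lemma; it simply cites \cite[Lemma 2.3.15]{BEG13} and uses the result as a black box. Your argument is the standard Ishii--Perron construction, which is precisely what the cited reference carries out, and it is correct: the only place the degenerate/singular weight could cause trouble is near $\{x_n=0\}$, and you rightly observe that every test point lies in the open set $Q_1^+$, so the bump construction goes through unchanged. One small point worth making explicit is why $\varepsilon$ can be chosen so that $\varphi_\varepsilon \leq u^+$ on the bump region: since $u^+$ is itself a supersolution, equality $u_*(x_0,t_0)=u^+(x_0,t_0)$ would force $\varphi$ to touch $u^+$ from below and hence satisfy the supersolution inequality, contradicting your strict subsolution inequality; thus $u_*(x_0,t_0)<u^+(x_0,t_0)$ and continuity of $u^+$ gives the room needed.
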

\begin{lemma} [Existence of the viscosity solutions]
Let $f \in C(Q_1^+) \cap L^{\infty}(Q_1^+)$ and $g \in C(\partial_p Q_1^+)$. Then the Cauchy-Dirichlet problem
\begin{equation} \label{eq:diri_model}
	\left\{\begin{aligned}
		u_t - x_n^{\gamma} F(D^2 u,x,t) &= f && \text{in } Q_1^+\\
		u &= g && \text{on } \partial_p Q_1^+
	\end{aligned} \right.
\end{equation}
has a unique viscosity solution.
\end{lemma}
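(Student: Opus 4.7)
The plan is to combine Perron's method (\Cref{lem:perron}) with the comparison principle (\Cref{com_prin}). Uniqueness is immediate from \Cref{com_prin}, so I concentrate on existence. First I would construct continuous global sub- and supersolutions $u^\pm \in C(\overline{Q_1^+})$ with $u^- \leq u^+$ and $u^\pm = g$ on $\partial_p Q_1^+$. Invoking \Cref{lem:perron} then yields a function $u$ satisfying $u^- \leq u \leq u^+$ with $u^*$ a subsolution and $u_*$ a supersolution. The sandwich forces $u^* = u_* = g$ on $\partial_p Q_1^+$, and the USC/LSC version of the Jensen--Ishii doubling argument already used in the proof of \Cref{com_prin} extends to give $u^* \leq u_*$ in $Q_1^+$; combined with the trivial $u_* \leq u^*$ this proves $u \in C(\overline{Q_1^+})$ and establishes it as the unique viscosity solution.

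The heart of the argument is the construction of $u^\pm$, which I would carry out by gluing local barriers at every boundary point. At $(x_0,t_0) \in \partial_p Q_1^+$ with $x_{0,n} > 0$, the coefficient $x_n^\gamma$ is bounded above and bounded away from zero on a small parabolic neighborhood, so \eqref{eq:diri_model} is locally uniformly parabolic there and the classical barriers from \cite{Wan92b} apply directly after a harmless translation and cutoff. For flat-boundary points $(x_0,t_0) \in \partial_p Q_1^+ \cap \{x_n=0\}$, I would employ the ansatz
\begin{equation*}
  w^+(x,t) = g(x_0,t_0) + \omega_g\!\left(|x-x_0|+|t-t_0|^{\frac{1}{2-\gamma}}\right) + A\bigl(|x'-x_0'|^2 + (t-t_0)\bigr) + M x_n - \frac{C\,x_n^{2-\gamma}}{(2-\gamma)(1-\gamma)},
\end{equation*}
where $\omega_g$ is the modulus of continuity of $g$. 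In the degenerate case $0<\gamma<1$ one may take $C=0$: since $x_n^\gamma \leq 1$, the $A$-term dominates and delivers $w^+_t - x_n^\gamma F(D^2 w^+,x,t) \geq \|f\|_{L^\infty}$ for $A$ large, while $M x_n$ (a classical solution of $v_t - x_n^\gamma F(D^2v,x,t) = -x_n^\gamma F(O_n,x,t)$) secures $w^+ \geq g$ on the lateral portion of $\partial_p Q_1^+$ for $M$ large. In the singular case $\gamma \leq 0$, the correction contributes $-Cx_n^{-\gamma}$ to the $(n,n)$-entry of $D^2 w^+$, so by \ref{F1} the value of $F(D^2 w^+,x,t)$ drops by at least $\lambda C x_n^{-\gamma}$, and hence $x_n^\gamma F(D^2 w^+,x,t)$ drops by the constant $\lambda C$; choosing $C$ large absorbs the potential blow-up of $x_n^\gamma F(O_n,x,t)$ as $x_n \to 0^+$. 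A symmetric construction furnishes the lower barrier $w^-$, and patching $u^+ \coloneqq \inf_{(x_0,t_0) \in \partial_p Q_1^+} w^+_{(x_0,t_0)}$ (respectively $u^- \coloneqq \sup w^-_{(x_0,t_0)}$) produces global sub- and supersolutions, since viscosity supersolutions are stable under infima and this infimum is continuous up to the boundary by the barrier property at each point.

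The main obstacle is the singular-case barrier, where the uniform cancellation of $x_n^\gamma F(O_n,\cdot)$ by $\lambda C$ must be verified carefully together with the lateral constraint $w^+ \geq g$ on $\{|x|=1\} \cap \partial_p Q_1^+$, since these two requirements compete through the choices of $M$ and $C$. Once the barriers are in place the remainder is a routine invocation of \Cref{lem:perron} and the USC/LSC extension of \Cref{com_prin} as outlined in the first paragraph.
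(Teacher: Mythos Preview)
Your overall strategy---Perron's method combined with the comparison principle---is exactly the paper's. The difference lies in the barriers. The paper dispenses with local constructions entirely and takes the single global pair
\begin{equation*}
  u^{\pm}(x,t) = \pm\, e^{t+1}\bigl(\|f\|_{L^\infty(Q_1^+)} + \|g\|_{C(\partial_p Q_1^+)}\bigr),
\end{equation*}
which are constant in space (so $D^2 u^\pm = O_n$) and whose time derivative already dominates the right-hand side; \Cref{lem:perron} then produces $u$ with $u^*$ a subsolution and $u_*$ a supersolution, and \Cref{com_prin} closes the argument in one line. Your route---building pointwise barriers $w^\pm_{(x_0,t_0)}$ at each boundary point, distinguishing flat from non-flat points, and gluing via infima and suprema---is the classical Perron-with-barriers approach and has the virtue of making the boundary attainment $u|_{\partial_p Q_1^+}=g$ explicit, a step the paper's terse argument leaves tacit (its $u^\pm$ only bound $g$ on $\partial_p Q_1^+$ rather than equal it). What the paper buys is extreme brevity; what you buy is a self-contained verification that the Perron solution actually realises the prescribed boundary data, at the cost of the two-case barrier analysis and the competing choices of $M$ and $C$ in the singular regime that you correctly flag as the main technical point.
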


\begin{proof}
Consider functions 
\begin{equation*}
	u^{\pm}=\pm e^{ t + 1}   ( \|f\|_{L^{\infty}(Q_1^+)} +  \|g\|_{C(\partial_p Q_1^+)}  ) .
\end{equation*}
Note that $u^+$ is a viscosity supersolution of \eqref{eq:diri_model} and $u^{-} $ is a viscosity subsolution of \eqref{eq:diri_model}. Then, by \Cref{lem:perron}, there exists a function $u:Q_1^+ \to \mathbb{R}$ such that $u^-\leq u \leq u^+$, $u^*$ is a viscosity subsolution of \eqref{eq:diri_model} in $Q_1^+$, and $u_*$ is a viscosity supersolution of \eqref{eq:diri_model} in $Q_1^+$. Since $u^*$ is viscosity subsolution of \eqref{eq:diri_model} in $Q_1^+$ and $u_*$ is viscosity supersolution of \eqref{eq:diri_model} in $Q_1^+$, by the comparison principle [\Cref{com_prin}], we have $u^* \leq u_*$ in $Q_1^+$. Furthermore, $u_*\leq u \leq u^*$ by definition of the semicontinuous envelopes, we have $u = u_* =u^*$ and hene $u$ is a viscosity solution of \eqref{eq:diri_model}. The uniqueness of the solution is obtained directly by the comparison principle.
\end{proof}

%
%
\section{Boundary H\"older Gradient Estimates} \label{sec:c1a}
In this section, we prove the boundary $C^{1,\alpha}$-regularity for functions in $S^*(\lambda,\Lambda, f)$. We first prove Lipschitz estimate and Hopf principle. These are the key lemmas for proving the boundary $C^{1,\alpha}$-regularity for functions in $S^*(\lambda,\Lambda, f)$.

\subsection{Lipschitz Estimate and Hopf Principle}
Lipschitz estimation and Hopf principle are mainly concerned with the construction of barrier functions. However, because of term $x_n^{\gamma}$, the construction of the barrier function using the fundamental solution of the heat equation is not suitable. That is, it is necessary to construct a barrier function considering the influence of $x_n^{\gamma}$ and consider a suitable domain for it.

To simplify the notation in the proof, we denote $\{e_1,\cdots, e_n\}$ the standard basis of $\mathbb{R}^n$.
\begin{lemma} [Lipschitz estimate] \label{lem:u<x}
Let $f \in C(Q_1^+) \cap L^{\infty}(Q_1^+)$. Assume that $u \in C(\overline{Q_1^+})$ satisfies
\begin{equation*}
	\left\{\begin{aligned}
		u&\in S^*(\lambda,\Lambda, f) && \text{in } Q_1^+ \\
		u&=0 && \text{on } \partial_p Q_1^+\cap \{x_n=0\}.
	\end{aligned}\right.
\end{equation*}
Then 
\begin{equation*}
	|u(x,t)|\leq C ( \|u\|_{L^{\infty}(Q_1^+)} + \|f\|_{L^{\infty}(Q_1^+)}) |x| \quad \text{for all } (x,t) \in \overline{Q_{1/2}^{+}},
\end{equation*}
where $C>0$ is a constant depending only on $n$, $\lambda$, $\Lambda$, and $\gamma$.
\end{lemma}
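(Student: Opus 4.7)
The approach is to apply the comparison principle (Theorem~\ref{com_prin}) against a carefully constructed explicit barrier. After dividing $u$ and $f$ by $M \coloneqq \|u\|_{L^\infty(Q_1^+)} + \|f\|_{L^\infty(Q_1^+)}$, I may assume $M \leq 1$, and since $-u \in S^*(\lambda, \Lambda, f)$ as well, it suffices to prove the one-sided estimate $u(x_0, t_0) \leq C(x_0)_n$ for every $(x_0, t_0) \in \overline{Q_{1/2}^+}$; applying the same argument to $-u$ and using $(x_0)_n \leq |x_0|$ then gives the claim. The bound is trivial when $(x_0)_n$ is bounded below by a universal constant (from $|u| \leq 1$) or vanishes (where $u = 0$ by the boundary condition), so the interest is in $(x_0)_n$ small.

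Set $\tilde x_0 = (x_0', 0)$. I would work in a suitable intrinsic half-cylindrical neighborhood $D \subset Q_1^+$ around $(\tilde x_0, t_0)$ and construct a viscosity supersolution $\psi$ of $\psi_t - x_n^\gamma \mathcal{M}^+_{\lambda, \Lambda}(D^2\psi) \geq 1$ in $D$ satisfying $\psi = 0$ on $\{x_n = 0\} \cap \partial_p D$, $\psi \geq 1$ on the rest of $\partial_p D$, and $\psi(x_0, t_0) \leq C (x_0)_n$. The natural ansatz is
\begin{equation*}
\psi(x, t) = A\,x_n - B\,x_n^{2-\gamma} + K\,\Phi(x, t),
\end{equation*}
with $B \coloneqq 1/[\lambda(2-\gamma)(1-\gamma)]$ dictated by the exact identity
\begin{equation*}
\partial_t(-B x_n^{2-\gamma}) - x_n^\gamma \mathcal{M}^+_{\lambda, \Lambda}(D^2(-B x_n^{2-\gamma})) = 1,
\end{equation*}
which holds because the only nonzero Hessian eigenvalue of $-B x_n^{2-\gamma}$ is the negative number $-B(2-\gamma)(1-\gamma)\,x_n^{-\gamma}$, so $\mathcal{M}^+_{\lambda, \Lambda}$ returns the smaller Pucci constant $\lambda$ times it and the factors $x_n^\gamma$ and $x_n^{-\gamma}$ cancel. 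Thus $-Bx_n^{2-\gamma}$ is an exact supersolution with source $1$, and by subadditivity of $\mathcal{M}^+$ any supersolution of the homogeneous equation may be added without spoiling the source. The linear term $A x_n$ is harmless (zero Hessian) and lifts $\psi$ off the flat face linearly, while $K\Phi$ is a cutoff-type supersolution vanishing near $(\tilde x_0, t_0)$ and dominating $1/K$ on the non-flat part of $\partial_p D$. Applying Theorem~\ref{com_prin} with $F = \mathcal{M}^+_{\lambda, \Lambda}$ gives $u \leq \psi$ in $D$, and evaluation at $(x_0, t_0)$ yields $u(x_0, t_0) \leq A (x_0)_n + O((x_0)_n^{2-\gamma}) + K\Phi(x_0, t_0) \leq C (x_0)_n$ provided $\Phi$ is arranged to vanish in a neighborhood of the $x_n$-axis.

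The main obstacle is the construction of $\Phi$ together with the choice of $D$. The tension is that $\Phi$ must simultaneously vanish near $(\tilde x_0, t_0)$ (to preserve the target estimate), exceed $1/K$ on the non-flat part of $\partial_p D$ — which includes points where $x_n$ is arbitrarily small, such as the lower rim of any half-ball — and supersolve the homogeneous equation. A positive contribution to the $x'$-Hessian of $\Phi$ enters the supersolution inequality multiplied by the weight $x_n^\gamma$, whose boundary behavior is sharply different in the two regimes: in the degenerate range $0 < \gamma < 1$ the weight vanishes on the flat face and a mildly convex polynomial $\Phi$ is tolerable, while in the singular range $\gamma \leq 0$ the weight blows up and $\Phi$ must have nonpositive $x'$-Hessian near $\{x_n = 0\}$, which forces a $t$-dependent and $x_n$-modulated design whose positive $\partial_t \Phi$ compensates. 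Verifying these requirements in a unified way across both $\gamma$ ranges is the technical heart of the argument.
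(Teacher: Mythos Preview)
Your overall strategy---comparison against a barrier built from $-Bx_n^{2-\gamma}$ to absorb the source, plus an auxiliary piece to control the non-flat boundary---is exactly right, and your computation for the $x_n^{2-\gamma}$ term matches the paper's. But you leave the construction of $\Phi$ undone, and your discussion suggests you expect it to require a $t$-dependent design and a case split on the sign of $\gamma$. This is where the paper's approach diverges and is considerably simpler.

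The paper works globally on $Q_1^+$ (no localization to $D$) and takes $\Phi$ to be the \emph{stationary} radial function
\[
\eta(x) = 1 - |x+e_n|^{-\beta}, \qquad \beta > \max\{2,\, (n-1)\Lambda/\lambda - 1\},
\]
centered at $-e_n$, outside the half-space. For $\beta$ in this range one has $\mathcal{M}^+_{\lambda,\Lambda}(D^2\eta) < 0$ pointwise, so $\eta_t - x_n^\gamma \mathcal{M}^+(D^2\eta) = -x_n^\gamma \mathcal{M}^+(D^2\eta) \geq 0$ holds \emph{uniformly for every $\gamma<1$}: because $\eta$ is time-independent, the weight $x_n^\gamma$ never competes against a time derivative, and the sign issue you anticipate simply does not arise. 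This single barrier handles both the degenerate and singular regimes without modification. Moreover $\eta \geq 0$ on $\{x_n=0\}$, $\eta$ is bounded below by a positive constant on the lateral and bottom faces of $Q_1^+$ (since $|x+e_n|\geq\sqrt 2$ there), and $\eta(x) \leq \beta|x|$ near the origin---precisely the three properties your $\Phi$ needs.

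A second, smaller difference: instead of absorbing the source by $-Bx_n^{2-\gamma}$ alone, the paper first replaces $u$ by $\tilde u = (t+1)u$, which is a subsolution with source $\leq 2$; the barrier is then $v = 2M\eta - Mx_n^{2-\gamma}$ with a single large constant $M$. Your direct absorption is equally valid, but the $(t+1)$ trick keeps all the constants bundled.

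In short: your proposal is incomplete at its self-identified ``technical heart,'' and the difficulty you flag is illusory once you notice that a purely spatial, superharmonic-type $\eta$ does the job for all $\gamma$.
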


\begin{proof}
For any $\delta > 0$, replacing $u$ by 
$$ \frac{u}{\delta+\|u\|_{L^{\infty}(Q_1^+)} + \|f\|_{L^{\infty}(Q_1^+)} }$$ 
and letting $\delta \to 0^+$, we may assume that 
\begin{equation*}
	\|u\|_{L^{\infty}(Q_1^+)} \leq 1 \qquad  \mbox{and} \qquad  \|f\|_{L^{\infty}(Q_1^+)} \leq 1.
\end{equation*}

Let $\eta(x)=1- |x+e_n|^{-\beta}$ for the constant $\beta>\max\left\{2, (n-1)\Lambda/\lambda-1\right\}$ and consider following functions
\begin{equation*}
\tilde{u}= (t+1)u \qquad \mbox{and} \qquad v = 2M \eta - Mx_n^{2-\gamma}.
\end{equation*}
Then, it can be seen that $\tilde{u} \leq v$ on $\partial_p Q_1^{+}$ for sufficiently large $M>0$.

We claim that $\tilde{u}$ and $v$ are a viscosity subsolution and supersolution respectively of the following equation: 
\begin{equation*}
	w_t - x_n^{\gamma} \mathcal{M}_{\lambda,\Lambda}^{+} ( D^2 w) = 2 \quad \text{in } Q_1^+.
\end{equation*}
For any $(x,t) \in Q_1^+$, let $\tilde{\varphi}$ be a test function that touching $\tilde{u}$ from above at $(x,t)$. Then the test function $\varphi \coloneqq \tilde{\varphi}/(t+1)$ touches $u$ from above at $(x,t)$. Since $u \in S^*(\lambda, \Lambda, f)$ in $Q_1^+$, we know that
$$\varphi_t (x,t) - x_n^{\gamma} \mathcal{M}_{\lambda,\Lambda}^{+}  ( D^2 \varphi (x,t) ) \leq  \|f\|_{L^{\infty}(Q_1^+)} $$
and hence we have
\begin{align*}
	\tilde{\varphi}_t (x,t) - x_n^{\gamma} \mathcal{M}_{\lambda,\Lambda}^{+} \left(D^2 \tilde{\varphi}(x,t) \right) \leq  (t+1)\|f\|_{L^{\infty}(Q_1^+)} +  u (x,t)  \leq 2. 
\end{align*}

On the other hand, we can see that
\begin{equation*}
	\begin{aligned}
		D^2\eta(x) &\sim \beta|x+e_n|^{-\beta-2} 
			\left(\begin{array}{c|c}
				-(1+\beta) & \textbf{0} \\ \hline
				\textbf{0} & I_{n-1}
			\end{array}\right), \\ 
		D^2 x_n^{2-\gamma}  &= (2-\gamma)(1-\gamma)x_n^{-\gamma} 
			\left(\begin{array}{c|c}
				O_{n-1} & \textbf{0} \\ \hline
				\textbf{0} & 1
			\end{array}\right)
	\end{aligned}
\end{equation*}
by rotational symmetry, where $I_{n-1}$ and $O_{n-1}$ are the indentity matrix and zero matrix. Then, we have
\begin{align*}
	v_t - x_n^{\gamma} \mathcal{M}_{\lambda,\Lambda}^{+}( D^2 v) &= - M x_n^{\gamma} \mathcal{M}_{\lambda,\Lambda}^{+}( 2 D^2 \eta -D^2 x_n^{2-\gamma}) \\
	& \ge - 2 M x_n^{\gamma} \mathcal{M}_{\lambda,\Lambda}^{+}( D^2 \eta) + M x_n^{\gamma} \mathcal{M}_{\lambda,\Lambda}^{-}(D^2 x_n^{2-\gamma}) \\
&= 2 \beta M x_n^{\gamma} |x+e_n|^{-\beta-2} \big((1+\beta)\lambda- (n-1)\Lambda \big) + \lambda M (2-\gamma)(1-\gamma) \\
&\ge 2
\end{align*}
for sufficiently large $M>0$. By the comparison principle [\Cref{com_prin}], we have
$$(t+1) u \leq  2M \eta - Mx_n^{2-\gamma} \quad \mbox{in } Q_1^+.$$
Therefore for $0<r<1$, we conclude that
\begin{align*}
	u(x,t)&\leq \frac{1}{t+1}\big( 2M\eta -Mx_n^{2-\gamma} \big) \leq \frac{2M}{1-r^{2-\gamma}}\left(1-\frac{1}{(1+|x|)^{\beta}}\right)\leq\frac{2\beta M}{1-r^{2-\gamma}} |x|
\end{align*}
for all $(x,t) \in \overline{Q_r^+}$. Similarly, we can obtain the lower bound by considering the following functions
\begin{equation*}
\hat{u}=-(t+1)u \qquad \mbox{and} \qquad v = 2M\eta - Mx_n^{2-\gamma}.
\end{equation*}
\end{proof}

\begin{remark} \label{rmk_trans_inv}
	Assume that $u \in S^*(\lambda, \Lambda, f)$ in $Q_1^+$ and let $v(x,t)=u(x-y,t)$ be a translation in direction  $y \perp e_n$. Then $v\in S^*(\lambda, \Lambda, \tilde{f})$ in $Q_r^+$ for some $r \in (0,1)$, where $\tilde{f}(x,t) \coloneqq f(x-y,t)$. We can apply \Cref{lem:u<x} to the function $v$ and hence we have
$$|u(x,t)| \leq C \big( \|u\|_{L^{\infty}(Q_1^+)} + \|f\|_{L^{\infty}(Q_1^+)} \big)   x_n \quad \text{for all }  (x,t) \in \overline{Q_{r/2}^+}.$$
\end{remark}
\begin{lemma} [Hopf principle] \label{hopf}
Let $u \in C(\overline{Q_1^+})$ be a nonnegative function satisfying 
\begin{equation*}
	\left\{\begin{aligned}
		u&\in S(\lambda,\Lambda, 0) && \text{in } Q_1^+ \\
		u&=0 && \text{on } \partial_p Q_1^+\cap \{x_n=0\}.
	\end{aligned}\right.
\end{equation*}
Then  
 \begin{equation} \label{hopf_conclusion}
 	u(x,t) \ge C u(e_n/2,-2/4^{2-\gamma}) x_n \quad \mbox{for all } (x,t) \in \overline{Q_{1/2}^+},
 \end{equation}
where $C$ is a constant depending only on $n$, $\lambda$, $\Lambda$, and $\gamma$.
\end{lemma}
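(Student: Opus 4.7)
The plan combines the interior Harnack inequality with a comparison against a radial subsolution of the extremal equation $w_t-x_n^{\gamma}\mathcal{M}^{-}_{\lambda,\Lambda}(D^2 w)=0$. Set $M\coloneqq u(e_n/2,-2/4^{2-\gamma})$; by positive homogeneity of the class $\overline{S}(\lambda,\Lambda,0)$ one may normalize to $M=1$, the case $M=0$ being trivial since $u\ge 0$.

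First, I would apply Theorem~\ref{int_harnack} with $a=e_n/2$, $r=1/4$, and $f\equiv 0$ to the cylinder $\overline{B_{1/4}(e_n/2)}\times[-3/4^{2-\gamma},0]$, which lies compactly in $Q_1^+$ because $B_{1/4}(e_n/2)\subset\{1/4<x_n<3/4,\,|x'|<1/4\}\subset B_1$ and $3/4^{2-\gamma}<3/4$ when $2-\gamma>1$. This produces a universal $c_0>0$ such that
\[
u(x,t)\ge c_0\quad\text{for all }(x,t)\in E\coloneqq B_{1/4}(e_n/2)\times(-1/4^{2-\gamma},0].
\]

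Second, I would construct a nonnegative barrier $\phi$ on the annular domain $D\coloneqq \bigl(B_{1/2}(e_n/2)\times(-1/2^{2-\gamma},0]\bigr)\setminus\overline{E}$ satisfying \emph{(i)} $\phi$ is a strict classical subsolution of $w_t-x_n^{\gamma}\mathcal{M}^{-}_{\lambda,\Lambda}(D^2 w)=0$ in $D$; \emph{(ii)} $\phi\le u$ on $\partial_p D$ (using $u\ge c_0$ on $\partial E$ and $u\ge 0$ elsewhere, with the barrier arranged to vanish or be nonpositive on the remaining parabolic boundary); \emph{(iii)} $\phi(x,t)\ge c_1 x_n$ on $\overline{Q_{1/2}^+}\cap D$ for a universal $c_1>0$. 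A natural ansatz combines a radial-in-space, linear-in-time main part with a degenerate-scale correction,
\[
\phi(x,t)=\psi(t)\bigl(A|x-x_*|^{-\beta}-AR^{-\beta}\bigr)+Bx_n^{2-\gamma},
\]
with $x_*=e_n/2$, $R=1/2$ (so the outer sphere $\{|x-x_*|=R\}$ is tangent to $\{x_n=0\}$ at the origin and lies in $\overline{Q_1^+}$), exponent $\beta>(n-1)\Lambda/\lambda-1$, and a nonnegative time factor $\psi$ vanishing at $t=-1/2^{2-\gamma}$. The Hessian computation from the proof of Lemma~\ref{lem:u<x} yields
\[
\mathcal{M}^{-}_{\lambda,\Lambda}(D^2|x-x_*|^{-\beta})=\beta|x-x_*|^{-\beta-2}\bigl[(\beta+1)\lambda-(n-1)\Lambda\bigr]>0,
\]
so the radial piece is a subsolution, while the correction $Bx_n^{2-\gamma}$ (inspired by the $-x_n^{2-\gamma}$ term appearing in the supersolution of Lemma~\ref{lem:u<x}) contributes the strictly positive quantity $B\lambda(2-\gamma)(1-\gamma)x_n^{-\gamma}$ to $\mathcal{M}^{-}_{\lambda,\Lambda}(D^2\phi)$; multiplied by $x_n^{\gamma}$ this yields a constant bounded below, which is what secures the subsolution property \emph{(i)} uniformly up to $\{x_n=0\}$ when $0<\gamma<1$. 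The parameters $\psi, A, B, \beta$ are tuned so that \emph{(ii)} holds, and \emph{(iii)} follows from a first-order Taylor expansion of $|x-x_*|^{-\beta}$ in the $x_n$-direction at the origin.

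The Pucci version of Theorem~\ref{com_prin} applied to the supersolution $u\in\overline{S}(\lambda,\Lambda,0)$ and the subsolution $\phi$ gives $u\ge\phi$ on $D$; combining with $u\ge c_0\ge (4c_0/3)x_n$ on $E\cap\overline{Q_{1/2}^+}$ (where $x_n\le 3/4$) recovers \eqref{hopf_conclusion}. The main obstacle is the barrier construction itself: the degeneracy $x_n^\gamma$ vanishes precisely on the flat boundary where Hopf is being proved, so the naive radial subsolution fails to remain strict uniformly up to $\{x_n=0\}$ in the degenerate range $0<\gamma<1$, and reconciling the correction term $Bx_n^{2-\gamma}$ (necessary for \emph{(i)}) with the sign conditions required in \emph{(ii)} on the outer parabolic boundary of $D$ is the delicate technical step.
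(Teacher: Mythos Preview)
Your first step (interior Harnack to get $u\ge c_0$ on $E$) matches the paper exactly. The gap is in the barrier construction for the degenerate range $0<\gamma<1$, and it is not merely a delicate technical step but a structural obstruction to your separable ansatz.

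With $\phi(x,t)=\psi(t)\bigl(A|x-x_*|^{-\beta}-AR^{-\beta}\bigr)+Bx_n^{2-\gamma}$ you need $B>0$ for \emph{(i)}: near $x_n=0$ the subsolution inequality reduces to $\psi'(t)\cdot[\text{positive bounded}]\le B\lambda(2-\gamma)(1-\gamma)$, and since $\psi$ must increase from $0$ at the initial time to a positive value, the left side is strictly positive. But then on the outer lateral boundary $\{|x-x_*|=R\}$ and on the bottom $\{t=-1/2^{2-\gamma}\}$ (where $\psi=0$), you get $\phi=Bx_n^{2-\gamma}>0$, while all you know there is $u\ge 0$; hence \emph{(ii)} fails. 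Taking $B\le 0$ restores \emph{(ii)} but destroys \emph{(i)}. More fundamentally, any separable barrier $\psi(t)\Phi(x)$ fails because as $x_n\to 0$ the operator degenerates to $\partial_t$, forcing $\psi'\le 0$, which is incompatible with $\psi$ growing from zero.

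The paper resolves this by abandoning separability in the degenerate case. It uses a barrier built from $\xi(x,t)=e^{-\beta(|x-e_n/2|^2-\delta t x_n-1/4)}$ together with a second exponential $-e^{-\beta(4^{2-\gamma}-\delta)tx_n}$, on a domain $\Sigma=\{1/4<|x-e_n/2|<\sqrt{4^{2-\gamma}tx_n+1/4},\ t<0\}$ whose outer boundary is itself curved by $tx_n$. The crucial point is that $\xi_t=\beta\delta x_n\,\xi$ carries an extra factor of $x_n$, so $x_n^{-\gamma}\xi_t=\beta\delta x_n^{1-\gamma}\xi$ remains bounded as $x_n\to 0$; this is exactly what allows the subsolution inequality to survive uniformly up to the flat boundary. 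For $\gamma<0$ the paper uses a simpler Gaussian-type barrier $e^{-\beta(|x-e_n/2|^2-4^{1-\gamma}t)}$, close in spirit to your radial idea (and there your approach would essentially work without the $Bx_n^{2-\gamma}$ correction, since $x_n^\gamma$ is bounded below). The case split and the non-separable $tx_n$ coupling are the missing ingredients in your proposal.
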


\begin{proof}
By the interior Harnack inequality [\Cref{int_harnack}], we have 
\begin{equation*}
	u(e_n/2,-2/4^{2-\gamma})\leq \sup_{B_{1/4} (e_n/2)\times(-3/4^{2-\gamma},-2/4^{2-\gamma}]} u \leq C_0^{-1}  \inf_{B_{1/4}(e_n/2) \times (-1/4^{2-\gamma},0]} u,
\end{equation*}
where $C_0>0$ is a constant depending only on $n$, $\lambda$, $\Lambda$, and $\gamma$. Then it immediately follows that
\begin{equation*}
	u \ge C_0 u(e_n/2,-2/4^{2-\gamma}) \quad \mbox{on } \overline{B_{1/4}(e_n/2) \times (-1/4^{2-\gamma},0]}.
\end{equation*}

First, let us consider the case of singular equations, i.e., $\gamma < 0$. In this case, consider functions $v, \varphi: \Omega \to \mathbb{R}$ given by
\begin{equation*}
	v(x,t) \coloneqq  C_0 u(e_n/2,-2/4^{2-\gamma})   \frac{ \varphi(x,t)- e^{-\beta/4}}{e^{-\beta/16 }- e^{-\beta/4}}  
	\qquad \text{and} \qquad
	\varphi(x,t) \coloneqq e^{-\beta ( |x-e_n/2|^2 -4^{1-\gamma} t)} ,
\end{equation*}
where $\Omega = \{(x,t): 1/4 < |x-e_n/2| < \sqrt{4^{1-\gamma}t+ 1/4}, \, t < 0 \} $. Since the parabolic boundary is 
\begin{align*}
	\partial_p \Omega&= S_1 \cup S_2, \\ 
	S_1 &\coloneqq \{(x,t) \in \overline{\Omega} : |x-e_n/2| = \sqrt{ 4^{1-\gamma}t + 1/4 }, \, t < 0 \},  \\
	S_2 &\coloneqq \{(x,t) \in \overline{\Omega}:  |x-e_n/2| =1/4, \, t < 0 \} ,
\end{align*}
we have $u \ge v$ on $\partial_p \Omega$.

For any matrix $A= (a^{ij}) \in \mathcal{S}^n$ with $\lambda I \leq A \leq \Lambda I$ and sufficiently large $\beta >0$, we have
\begin{align*}
	(x_n^{-\gamma} \varphi_t -  a^{ij} \varphi_{ij} ) / \varphi 
	& \leq 4^{1-\gamma} \beta - \frac{1}{4} \lambda \beta^2 + 2 n \Lambda \beta  < 0  \quad \text{in } \Omega
\end{align*}
and hence
$$\varphi_t  - x_n^{\gamma} \mathcal{M}_{\lambda,\Lambda}^{-}(D^2 \varphi )  = \sup_{\lambda I \leq A \leq \Lambda I}  (\varphi_t -  x_n^{\gamma} a^{ij} \varphi_{ij}) \leq 0 \quad \mbox{in } \Omega.$$
This implies that 
\begin{equation*}
	v_t  -  x_n^{\gamma} \mathcal{M}_{\lambda,\Lambda}^{-}(D^2 v) \leq 0\quad \mbox{in } \Omega. 
\end{equation*}
By the comparison principle [\Cref{com_prin}], we have
\begin{equation*}
	u(0',x_n,0) \ge v(0',x_n,0) \ge C  u(e_n/2,-2/4^{2-\gamma}) x_n \quad \text{for }  0<x_n<1/4.
\end{equation*}
For the same reason as \Cref{rmk_trans_inv}, we know that \eqref{hopf_conclusion} holds by considering the translation in the $x'$ and $t$ directions.

Next, let us consider the case of degenerate equations, i.e., $0<\gamma<1$. Consider functions $w: \Sigma \to \mathbb{R}$ given by
\begin{align*}
	w(x,t) & \coloneqq C_0 u(e_n/2,-2/4^{2-\gamma})  e^{\beta(\delta-2\cdot 4^{2-\gamma})/4^{3-\gamma}}  \frac{ e^{-\beta  ( |x-e_n/2|^2- \delta t x_n -1/4 )} - e^{-\beta(4^{2-\gamma}-\delta)tx_n}}{e^{-\beta/16 }- e^{-\beta/4}},
\end{align*}
where $\Sigma = \{ (x,t) : 1/4< |x-e_n/2| < \sqrt{4^{2-\gamma}t x_n +1/4}, \, t < 0 \}$. Since the parabolic boundary is 
\begin{align*}
	\partial_p \Sigma &= T_1 \cup T_2, \\
	T_1 & \coloneqq \{X \in \overline{\Sigma} : |x-e_n/2| = \sqrt{4^{2-\gamma}t x_n + 1/4}, \, t < 0 \},  \\
	T_2 & \coloneqq \{X \in \overline{\Sigma}:  |x-e_n/2| =1/4, \, t < 0 \},
\end{align*}
we have $u \ge w$ on $\partial_p \Sigma$.
Note that for any $(x,t) \in \Sigma$, the following inequalities hold
\begin{equation*}
	\frac{1}{4} < |x-e_n/2| < \frac{1}{2} ,\quad  -\frac{1}{4^{2-\gamma}}< t \leq 0 \quad \text{and} \quad -tx_n < \frac{1}{4^{3-\gamma}}.
\end{equation*}

Let $	\xi(x,t)  \coloneqq e^{-\beta  ( |x-e_n/2|^2- \delta t x_n -1/4 )}$. Then for $\delta <\frac{\lambda}{20\Lambda}$ and $\beta > \frac{30(\delta +2n\Lambda)}{\lambda}$, we have
\begin{align*}
	(x_n^{-\gamma} \xi_t -  a^{ij} \xi_{ij})/\xi
	& \leq \beta \delta - 4 \beta^2\lambda |x -e_n/2 |^2 + 2 n \beta \Lambda + 4\beta^2 \delta \Lambda |x - e_n/2| |t| - \beta^2 \delta^2 \lambda t^2  \\
	& \leq \beta \delta - \frac{1}{4} \beta^2\lambda + 2 n \beta \Lambda +2\beta^2\delta\Lambda \\
	&\leq  \beta \delta - \frac{1}{5} \beta^2\lambda + 2 n \beta \Lambda  \leq   - \frac{1}{6} \beta^2\lambda \quad \text{in } \Sigma. 
\end{align*}
It follows that
\begin{equation} \label{op_xi}
	x_n^{-\gamma} \xi_t -  a^{ij} \xi_{ij}  \leq - \frac{1}{6} \beta^2\lambda e^{-\beta  ( |x-e_n/2|^2- \delta t x_n -1/4 )} \quad \text{in } \Sigma.
\end{equation}

Let $\psi(x,t) \coloneqq  - e^{-\beta(4^{2-\gamma}-\delta)tx_n}$. Then for $\beta > 4^{2-\gamma}/\Lambda$, we have
\begin{align}
	x_n^{-\gamma} \psi_t -  a^{ij}\psi_{ij}& = \beta(4^{2-\gamma}-\delta) x_n^{1-\gamma} e^{-\beta(4^{2-\gamma}-\delta)tx_n} + a^{nn} \beta^2 (4^{2-\gamma}-\delta)^2 t^2 e^{-\beta(4^{2-\gamma}-\delta)tx_n} \nonumber\\
	&\leq  (4^{2-\gamma}\beta + \beta^2 \Lambda) e^{-\beta(4^{2-\gamma}-\delta)tx_n}\nonumber \\
	&\leq 2\beta^2 \Lambda  e^{-\beta(16-\delta)tx_n} \quad \text{in } \Sigma. \label{op_psi}
\end{align}

Combining \eqref{op_xi} and \eqref{op_psi} gives 
\begin{align*}  
	&x_n^{-\gamma} w_t -  a^{ij} w_{ij} \nonumber \\
	&\quad  \leq \frac{C_0 u(e_n/2,-2/4^{2-\gamma})  e^{\beta(\delta-2\cdot 4^{2-\gamma})/4^{3-\gamma}} }{e^{-\beta/16 }- e^{-\beta/4}} \beta^2 \Big(2 \Lambda  e^{-\beta(4^{2-\gamma}-\delta)tx_n} - \frac{1}{6} \lambda e^{-\beta  ( |x-e_n/2|^2- \delta t x_n -1/4 )}\Big) 
\end{align*}
in $\Sigma$. Since $ |x-e_n/2| < \sqrt{4^{2-\gamma}t x_n +1/4}$, we can see that 
\begin{equation*}
	-\beta(4^{2-\gamma}-\delta)tx_n < -\beta  ( |x-e_n/2|^2- \delta t x_n -1/4 )
\end{equation*}
Then for sufficiently large $\beta>0$, we have $w_t -x_n^{\gamma}a^{ij}w_{ij}  \leq 0$ in $\Sigma$. Thus, we obtain the desired conclusion as in the case of singular equations.
\end{proof}
\subsection{Boundary $C^{1,\alpha}$-estimate for functions in $S^*(\lambda,\Lambda, f)$.}
We will first apply the iteration procedure to the simplified problems and use the compactness argument in the general case.
\begin{lemma} \label{lem:c1a_f0_g0}
	Let $u \in C(\overline{Q_1^+})$ satisfy 
\begin{equation*}
	\left\{\begin{aligned}
		u&\in S(\lambda,\Lambda, 0) && \text{in } Q_1^+ \\
		u&=0 && \text{on } \partial_p Q_1^+\cap \{x_n=0\}.
	\end{aligned}\right.
\end{equation*}
Then there exists $\overline{\alpha}\in(0,1)$ depending only on $n$, $\lambda$, $\Lambda$, and $\gamma$ such that $u \in C^{1,\overline{\alpha}}(0,0)$, i.e., there exists a constant $a \in \mathbb{R}$ such that
\begin{equation*}
	|u(x,t)-ax_n| \leq C_1\|u\|_{L^{\infty}(Q_1^+)}(|x| + |t|^{\frac{1}{2-\gamma}})^{\overline{\alpha}} x_n \quad \text{for all } (x,t) \in \overline{Q_{1/2}^+}
\end{equation*}
and $|a| \leq C_1$, where $C_1>1$ is a constant depending only on $n$, $\lambda$, $\Lambda$, $\gamma$, and $\overline{\alpha}$.
\end{lemma}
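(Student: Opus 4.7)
I would use the classical iteration scheme that combines the Lipschitz estimate (\Cref{lem:u<x}) and the Hopf principle (\Cref{hopf}). After normalizing so that $\|u\|_{L^{\infty}(Q_1^+)} \leq 1$, the goal is to build monotone sequences of slopes $a_k \uparrow a$ and $b_k \downarrow a$ with $b_k - a_k \leq 2C_L(1-\eta)^k$ and $a_k x_n \leq u \leq b_k x_n$ on $Q_{\rho^{k+1}}^+$, for universal $\rho \in (0, 1/2]$, $\eta \in (0,1)$, and $C_L$; the exponent $\overline{\alpha}$ will then be defined as $-\log(1-\eta)/\log(1/\rho)$. The base case is immediate: \Cref{lem:u<x} furnishes $-C_L x_n \leq u \leq C_L x_n$ on $Q_{1/2}^+$, so I take $\rho = 1/2$, $a_0 = -C_L$, $b_0 = C_L$.

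The inductive step is where the work lies. The essential structural fact is that the class $S(\lambda, \Lambda, 0)$ is preserved under the intrinsic rescaling $(y,s) \mapsto (\rho y, \rho^{2-\gamma} s)$, under multiplication by a positive constant ($\mathcal{M}^{\pm}_{\lambda,\Lambda}$ is $1$-homogeneous), and under subtraction of any affine function $c x_n$ (since $D^2(c x_n) = 0$ and $\partial_t(c x_n) = 0$). Given $a_k, b_k$, I rescale and recenter:
\[
v(y,s) := \frac{u(\rho^{k+1} y, \rho^{(k+1)(2-\gamma)} s) - \tfrac{a_k + b_k}{2} \rho^{k+1} y_n}{M_k \rho^{k+1}}, \qquad M_k := \tfrac{b_k - a_k}{2}.
\]
By the inductive hypothesis $|v| \leq y_n$ on $Q_1^+$, $v=0$ on $\{y_n=0\}$, and $v \in S(\lambda,\Lambda,0)$. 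Setting $P := (e_n/2, -2/4^{2-\gamma})$ I have $|v(P)| \leq 1/2$. If $v(P) \geq 0$, the function $w := v + y_n$ is a nonnegative member of $S(\lambda, \Lambda, 0)$ on $Q_1^+$ vanishing on $\{y_n = 0\}$ with $w(P) \geq 1/2$, so \Cref{hopf} yields $w \geq (C_0/2) y_n$ on $Q_{1/2}^+$, i.e., $v \geq -(1 - C_0/2) y_n$ there. The case $v(P) < 0$ is symmetric and produces $v \leq (1 - C_0/2) y_n$ on $Q_{1/2}^+$. Unwinding the rescaling (with $\rho = 1/2$, so $\rho^{k+1}/2 = \rho^{k+2}$), I update either $(a_{k+1}, b_{k+1}) = (a_k + \eta(b_k - a_k), b_k)$ or $(a_k, b_k - \eta(b_k - a_k))$ with $\eta := C_0/4$, preserving the inductive configuration on $Q_{\rho^{k+2}}^+$.

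To conclude, $(a_k)$ and $(b_k)$ share a common limit $a \in [-C_L, C_L]$. For $(x, t) \in \overline{Q_{1/2}^+}$, I pick $k \geq 0$ with $\rho^{k+2} < |x| + |t|^{1/(2-\gamma)} \leq \rho^{k+1}$ (or handle the largest scale directly), which places $(x, t)$ in $Q_{\rho^{k+1}}^+$; since both $u(x,t)$ and $a x_n$ lie in $[a_k x_n, b_k x_n]$,
\[
|u(x, t) - a x_n| \leq (b_k - a_k) x_n \leq 2 C_L (1-\eta)^k x_n \leq C_1 (|x| + |t|^{1/(2-\gamma)})^{\overline{\alpha}} x_n
\]
follows from $(1 - \eta)^k = \rho^{k \overline{\alpha}}$ together with $\rho^{k+1} \leq (|x| + |t|^{1/(2-\gamma)})/\rho$. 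Undoing the normalization restores the $\|u\|_{L^{\infty}(Q_1^+)}$ factor and gives $|a| \leq C_L$.

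\textbf{Main obstacle.} The critical point of care is verifying that the rescaled function $v$ genuinely belongs to $S(\lambda, \Lambda, 0)$ with the correct $y_n^\gamma$ weight. The intrinsic time scaling $s = \rho^{2-\gamma} t$ is chosen precisely so that, after the Pucci operator produces a $\rho^2$ from the $D^2$ and the rescaled weight $(\rho y_n)^\gamma$ produces a $\rho^\gamma$, the two factors combine with the $\rho^{2-\gamma}$ from $\partial_t$ to leave the inequality invariant. This is the reason the intrinsic cylinders $Q_r^+$, rather than the standard parabolic cylinders, are the natural objects throughout the paper. Once this compatibility is in place, and one takes care to calibrate the rescaling so that the functions $v \pm y_n$ are in the correct class on all of $Q_1^+$ before invoking \Cref{hopf}, the dichotomy step is routine.
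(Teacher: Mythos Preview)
Your proposal is correct and follows essentially the same iteration scheme as the paper: normalize, use \Cref{lem:u<x} for the base case, then at each scale rescale intrinsically, subtract an affine function in $x_n$, and apply the Hopf principle (\Cref{hopf}) via a dichotomy on the value at the reference point $(e_n/2,-2/4^{2-\gamma})$ to improve one of the two bounding slopes by a fixed fraction. The only cosmetic difference is that the paper subtracts $a_k x_n$ and normalizes by $r_k^{1+\overline\alpha}$ while you subtract the midpoint $\tfrac{a_k+b_k}{2}x_n$ and normalize by $M_k r_k$, but the mechanism and the resulting $\overline\alpha$ are the same.
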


\begin{proof}
For any $\delta > 0$, replacing $u$ by $u/(\delta+\|u\|_{L^{\infty}(Q_1^+)})$ and letting $\delta \to 0^+$, we may assume that $\|u\|_{L^{\infty}(Q_1^+)} \leq 1$. We will construct an increasing sequence $\{a_k\}_{k=0}^{\infty}$ and a decreasing sequence $\{b_k\}_{k=0}^{\infty}$ such that for any $k \ge 1$, we have
\begin{equation} \label{induction}
	\left\{ \begin{aligned}
		&a_k x_n \leq u(x,t) \leq b_k x_n \quad \text{for all } (x,t) \in \overline{Q_{2^{-k}}^+}  \\
		&0 \leq b_k - a_k \leq \mu 2^{-k\overline{\alpha}},  \\
	\end{aligned}\right.
\end{equation}
where $\mu >0$ and $\overline{\alpha} \in (0,1)$ are constants which will be determined later.

We argue by induction. The first step in this construction is obtained by Lipschitz estimate. By \Cref{lem:u<x}, we have
\begin{equation*}
	-\overline{C}x_n \leq u(x,t) \leq \overline{C}x_n \quad \text{for all } (x,t) \in \overline{Q_{1/2}^{+}},
\end{equation*}
where $\overline{C}>0$ is determined in \Cref{lem:u<x}. In this case, we can take $a_0= -2\overline{C}$, $a_1= - \overline{C}$, $b_0= 2\overline{C}$, $b_1= \overline{C}$, and $\mu = 2^{\overline{\alpha}+1} \overline{C}$, then \eqref{induction} holds for $k=1$.

Suppose that \eqref{induction} holds for $k \geq 1$. We claim that \eqref{induction} also holds for $(k+1)$. Let $r_k=2^{-k}$. Based on the induction assumption, there are two possible cases:
\begin{equation*} 
	\text{either} \quad u(r_ke_n/2,-2r_k^{2-\gamma}/4^{2-\gamma}) \ge \dfrac{r_k}{4}(a_k+b_k) \quad \text{or} \quad
	u(r_ke_n/2,-2r_k^{2-\gamma}/4^{2-\gamma}) < \dfrac{r_k}{4}(a_k+b_k) .
\end{equation*}
Let us first assume the former. Then the function 
\begin{equation*}
	w(x,t)=\frac{u(r_kx,r_k^{2-\gamma}t)-a_k r_k x_n}{r_k^{1+\overline{\alpha}}} \in [0,\mu]   
\end{equation*}
satisfies
\begin{equation*}
	\left\{\begin{aligned}
		w&\in S(\lambda,\Lambda, 0) && \text{in } Q_1^+ \\
		w&= 0 && \text{on } \partial_p Q_1^+\cap \{x_n=0\}.
	\end{aligned}\right.
\end{equation*}
Note that if $b_k -a_k \leq \mu r_{k+1}^{\overline{\alpha}}$, then we take $a_{k+1} = a_k$ and $b_{k+1} = b_k$. So we can assume that $b_k -a_k > \mu r_{k+1}^{\overline{\alpha}}$. By the Hopf principle [\Cref{hopf}], we have 
 \begin{equation*}
 	w(x,t) \ge \widetilde{C} w(e_n/2,-2/4^{2-\gamma}) x_n \ge \frac{1}{4r_k^{\overline{\alpha}}}\widetilde{C} (b_k-a_k) x_n  >  2^{-\overline{\alpha}-2}  \widetilde{C} \mu x_n \quad \text{for all } (x,t) \in \overline{Q_{1/2}^+},
\end{equation*}
where $\widetilde{C}>0$ is determined in \Cref{hopf}. It follows that
 \begin{equation*}
 	u(x,t) \ge \Big(a_k + 2^{-\overline{\alpha}-2}\widetilde{C}\mu r_k^{\overline{\alpha}} \Big) x_n  \quad \text{for all } (x,t) \in\overline{Q_{r_{k+1}}^+}.
\end{equation*}
We now take 
\begin{equation*}
	a_{k+1} = a_k +  2^{-\overline{\alpha}-2}\widetilde{C}\mu r_k^{\overline{\alpha}} \quad \text{and} \quad  b_{k+1}=b_k,   
\end{equation*}
and the constant $\overline{\alpha} \in (0,1)$ is chosen such that $2^{\overline{\alpha}} \leq 1+ \tilde{C}/4$. Then we have
\begin{align*}
	b_{k+1}-a_{k+1} =b_k -a_k -2^{-\overline{\alpha}-2}\widetilde{C}\mu r_k^{\overline{\alpha}}\leq (1 -2^{-\overline{\alpha}-2}\widetilde{C})\mu r_k^{\overline{\alpha}} \leq \mu r_{k+1}^{\overline{\alpha}}. 
\end{align*}
Thus, \eqref{induction} holds for $(k+1)$. Similarly, we can obtain the iterative step in the case of $u(r_ke_n/2,-2r_k^{2-\gamma}/4^{2-\gamma}) < r_k (a_k+b_k)/4$.

Finally, this construction ensures that there exists a constant $a \in \mathbb{R}$ such that
\begin{equation*}
	\lim_{k\to\infty} a_k=a=\lim_{k\to\infty} b_k \quad \text{and} \quad  |a| \leq 2\overline{C}. 
\end{equation*}
Furthermore,  for any $(x,t) \in \overline{Q_{1/2}^+}$, we can choose $k\in\mathbb{N}$ such that $r_{k+1} < \max\{|x|,|t|^{\frac{1}{2-\gamma}}\} \leq r_k$, and hence we have
\begin{equation*}
	u(x,t)-ax_n \leq (b_k-a)x_n \leq 2^{\overline{\alpha}} \mu r_{k+1}^{\overline{\alpha}} x_n \leq C (|x|+|t|^{\frac{1}{2-\gamma}})^{\overline{\alpha}} x_n.
\end{equation*}
Similarly, we can obtain the lower bound.
\end{proof}
\begin{lemma} \label{lem1:gen}
	Let $\overline{\alpha}\in (0,1)$ and $C_1>1$ be as in \Cref{lem:c1a_f0_g0}. For any $\alpha \in (0,\overline{\alpha})$, there exists $\delta >0$ depending only on $n$, $\lambda$, $\Lambda$, $\gamma$, and $\alpha$ such that if $u \in C(\overline{Q_1^+})$ satisfies
\begin{equation*}
	\left\{\begin{aligned}
		u&\in S^*(\lambda,\Lambda, f) && \text{in } Q_1^+ \\
		u&=g && \text{on } \partial_p Q_1^+\cap \{x_n=0\}
	\end{aligned}\right.
\end{equation*}
with $\|u\|_{L^{\infty}(Q_1^+)} \leq 1$, $\|f\|_{L^{\infty}(Q_1^+)} \leq \delta$, and $\|g\|_{L^{\infty}(\partial_p Q_1^+\cap \{x_n=0\})} \leq \delta$, then there exists a constant $a \in \mathbb{R}$ such that
\begin{equation*}
	\|u-ax_n\|_{L^{\infty}(Q_{\eta}^+)} \leq \eta^{1+\alpha}  
\end{equation*}
and $|a| \leq C_1$, where $\eta>0$ is a constant depending only on $n$, $\lambda$, $\Lambda$, $\gamma$, and $\alpha$.
\end{lemma}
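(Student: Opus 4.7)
The plan is a standard compactness (blow-up) argument by contradiction, which reduces the present claim to the zero-data setting already handled by Lemma \ref{lem:c1a_f0_g0}.

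First I would fix $\eta \in (0, 1/2)$ so small that $C_1 2^{\overline{\alpha}}\eta^{\overline{\alpha}-\alpha} \le 1/2$; this is possible since $\alpha < \overline{\alpha}$, and $\eta$ depends only on $n,\lambda,\Lambda,\gamma,\alpha$. Suppose for contradiction that no $\delta > 0$ works for this choice of $\eta$. Then there exist sequences $f_k, g_k, u_k$ satisfying the hypotheses with $\|f_k\|_\infty, \|g_k\|_\infty \le 1/k$ and $\|u_k\|_\infty \le 1$, yet
\[
\|u_k - a x_n\|_{L^\infty(Q_\eta^+)} > \eta^{1+\alpha} \qquad \text{for every } a \in [-C_1, C_1].
\]

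Next I would establish precompactness of $\{u_k\}$ in $C(\overline{Q_{3/4}^+})$. On regions $\{x_n \ge \varepsilon\} \cap Q_1^+$ the $u_k$ lie in the solutions class of a uniformly parabolic equation (with ellipticity constants depending on $\varepsilon$, as noted at the start of Subsection 2.4), so the standard interior Krylov--Safonov Hölder estimates give uniform equicontinuity there. Near the flat face, Lemma \ref{lem:u<x} applied to $u_k$ (after subtracting a smooth extension of $g_k$, whose sup norm is $O(1/k)$, or through a modified barrier absorbing that boundary data) yields $|u_k(x,t)| \le C x_n + o(1)$ uniformly in $k$, which delivers equicontinuity up to $\{x_n = 0\}$. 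Arzelà--Ascoli then furnishes a subsequence (not relabeled) converging uniformly on $\overline{Q_{3/4}^+}$ to some $u_\infty \in C(\overline{Q_{3/4}^+})$. By the standard stability of viscosity sub/supersolutions under uniform convergence together with $\|f_k\|_\infty \to 0$, $u_\infty \in S(\lambda,\Lambda,0)$ in $Q_{3/4}^+$, $u_\infty = 0$ on $\partial_p Q_{3/4}^+ \cap \{x_n = 0\}$, and $\|u_\infty\|_\infty \le 1$.

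After a harmless rescaling to $Q_1^+$, Lemma \ref{lem:c1a_f0_g0} produces $a \in \mathbb{R}$ with $|a| \le C_1$ and
\[
|u_\infty(x,t) - a x_n| \le C_1 \bigl(|x| + |t|^{1/(2-\gamma)}\bigr)^{\overline{\alpha}} x_n \le C_1 2^{\overline{\alpha}} \eta^{1+\overline{\alpha}} \le \tfrac{1}{2}\eta^{1+\alpha}
\]
on $\overline{Q_\eta^+}$, by the choice of $\eta$. Since $u_k \to u_\infty$ uniformly on $\overline{Q_\eta^+}$, for all sufficiently large $k$ we have $\|u_k - u_\infty\|_{L^\infty(Q_\eta^+)} \le \eta^{1+\alpha}/2$, and therefore
\[
\|u_k - a x_n\|_{L^\infty(Q_\eta^+)} \le \eta^{1+\alpha},
\]
contradicting the failure assumption above.

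The main obstacle is securing compactness up to the degenerate face $\{x_n = 0\}$: the weight $x_n^\gamma$ destroys uniform parabolicity there, so Krylov--Safonov Hölder estimates cannot be applied globally. The remedy is to couple the interior Hölder bounds valid on $\{x_n \ge \varepsilon\}$ with the barrier-based Lipschitz control of Lemma \ref{lem:u<x}, adapted to accommodate the small nonzero boundary data $g_k$. Once equicontinuity is in hand, stability of viscosity sub/supersolutions in the degenerate framework is routine by the standard semi-relaxed limits argument, since $x_n^\gamma F(\cdot,x,t)$ depends continuously on its arguments inside $Q_1^+$.
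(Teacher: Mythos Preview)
Your proof is correct and follows essentially the same compactness-by-contradiction route as the paper's own argument, reducing to Lemma~\ref{lem:c1a_f0_g0} after passing to a limit along a subsequence. You are in fact slightly more careful than the paper in justifying equicontinuity up to the degenerate face $\{x_n=0\}$ via Lemma~\ref{lem:u<x}, a point the paper leaves implicit.
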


\begin{proof}
We prove by contradiction; we suppose the conclusion does not hold. In other words, there exist sequences $u_k$, $f_k$, and $g_k$ such that $u_k$ satisfies
	\begin{equation*}
		\left\{\begin{aligned}
			u_k & \in S^*(\lambda,\Lambda, f_k) && \text{in } Q_1^+ \\
			u_k &= g_k && \text{on } \partial_p Q_1^+ \cap \{x_n=0\},
		\end{aligned}\right.
	\end{equation*}
$\|u_k\|_{L^{\infty}(Q_1^+)} \leq 1$, $\|f_k\|_{L^{\infty}(Q_1^+)} \leq 1/k$, and $\|g_k\|_{L^{\infty}(\partial_p Q_1^+\cap \{x_n=0\})} \leq 1/k$. Moreover, for any constant $a \in \mathbb{R}$ satisfying $|a| \leq C_1$, we have
\begin{align}\label{eq:contradiction}
	\|u_k-ax_n\|_{L^{\infty}(Q_{\eta}^+)} >\eta^{1+\alpha},
\end{align}
where $C_1>1$ and $\eta \in (0,1)$ will be determined later.

We first note that $\{u_k\}_{k=1}^{\infty}$ is uniformly bounded in $L^{\infty}(Q_1^+)$ and by the interior H\"older estimate (\cite[Theorem 4.19]{Wan92a}), $\{u_k\}_{k=1}^{\infty}$ is equicontinuous in compact sets of $Q_1^+$. Thus, by Arzela-Ascoli theorem, there exist a subsequence $\{u_{k_j}\}_{j=1}^{\infty}$ of $\{u_k\}_{k=1}^{\infty}$ and a limit function $\overline{u} \in C(Q_1^+)$ such that $u_{k_j} \to \overline{u}$ uniformly in compact sets of $Q_1^+$ as $j \to \infty$. Moreover, since $\|f_{k_j}\|_{L^{\infty}(Q_1^+)} \to 0$ as $j \to \infty$ and $\|g_{k_j}\|_{L^{\infty}(\partial_p Q_1^+\cap \{x_n=0\})} \to 0$ as $j \to \infty$, the stability theorem yields that 
\begin{equation*}
	\left\{\begin{aligned}
		\overline{u} & \in S(\lambda,\Lambda,0) && \text{in } Q_{3/4}^+ \\
		\overline{u} &= 0 && \text{on } \partial_p Q_{3/4}^+ \cap \{x_n=0\},
	\end{aligned}\right.
\end{equation*}

Then, by \Cref{lem:c1a_f0_g0}, there exist $\overline{\alpha}\in(0,1)$ and $C_1>1$ such that
\begin{align*}
	|\overline{u} (x,t)- \overline{a} x_n | \leq C_1(|x|+|t|^{\frac{1}{2-\gamma}})^{1+\overline{\alpha}} \quad \text{for all $(x,t) \in \overline{Q_{1/2}^+}$}
\end{align*}
and $|\overline{a}| \leq C_1$. For any $\alpha \in (0,\overline{\alpha})$, we take $\eta$ small enough so that $2^{1+\overline{\alpha}} \eta^{\overline{\alpha}-\alpha} C_1<1/2$. Then we observe that
\begin{align*}
	\|\overline{u}-\overline{a} x_n\|_{L^{\infty}(Q_{\eta}^+)} \leq 2^{1+\overline{\alpha}} \eta^{1+\overline{\alpha}}C_1 <\eta^{1+\alpha}/2.
\end{align*}
On the other hand, by letting $k_j \to \infty$ in \eqref{eq:contradiction}, we have
\begin{align*}
	\|\overline{u}-\overline{a} x_n\|_{L^{\infty}(Q_{\eta}^+)} \ge\eta^{1+\alpha},
\end{align*}
which leads to the contradiction.
\end{proof}
\begin{lemma} \label{lem2:gen}
	Let $\overline{\alpha}\in(0,1)$,  $C_1>1$, $\delta>0$, and $\eta >0$ be as in \Cref{lem1:gen}. Suppose that  $\alpha \in(0,\overline{\alpha})$ with $\alpha \leq 1-\gamma$ and $u \in C(\overline{Q_1^+})$ satisfies
\begin{equation*}
	\left\{\begin{aligned}
		u&\in S^*(\lambda,\Lambda, f) && \text{in } Q_1^+ \\
		u&=g && \text{on } \partial_p Q_1^+\cap \{x_n=0\}.
	\end{aligned}\right.
\end{equation*}
with $\|u\|_{L^{\infty}(Q_1^+)} \leq 1$, $\|f\|_{L^{\infty}(Q_1^+)} \leq \delta$, and
	\begin{equation*}  
		|g(x,t)| \leq \frac{\delta}{2^{1+\alpha}} (|x|+|t|^{\frac{1}{2-\gamma}})^{1+\alpha} \quad \text{for all } (x,t) \in \partial_p Q_1^+\cap \{x_n=0\}.
	\end{equation*}
Then there exists a sequence $\{a_k\}_{k=-1}^{\infty}$
such that for all $k \geq 0$, we have 
\begin{align}\label{eq:lin1_bd}
	\|u-a_kx_n\|_{L^{\infty}(Q_{\eta^k}^+)} \leq \eta^{k(1+\alpha)} \quad \text{and} \quad  |a_k-a_{k-1}| \leq  C_1 \eta^{(k-1) \alpha}.
\end{align}
\end{lemma}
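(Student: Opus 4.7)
The plan is to prove this by induction on $k$, constructing the sequence $\{a_k\}$ one step at a time by applying \Cref{lem1:gen} to a carefully rescaled version of $u - a_{k-1} x_n$. The base case $k = 0$ is handled by setting $a_{-1} = a_0 = 0$, so that $\|u - a_0 x_n\|_{L^{\infty}(Q_1^+)} = \|u\|_{L^{\infty}(Q_1^+)} \leq 1$ and $|a_0 - a_{-1}| = 0$ trivially.

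For the inductive step, assuming \eqref{eq:lin1_bd} holds up to index $k$, I would introduce the rescaling
\begin{equation*}
    v(\tilde x, \tilde t) \coloneqq \frac{u(\eta^k \tilde x, \eta^{k(2-\gamma)} \tilde t) - a_k \, \eta^k \tilde x_n}{\eta^{k(1+\alpha)}},
\end{equation*}
defined on $Q_1^+$. The intrinsic scaling $\tilde t = \eta^{-k(2-\gamma)} t$ is exactly the one that matches the operator $\partial_t - x_n^{\gamma} \mathcal{M}^{\pm}$: a direct computation using $(\eta^k \tilde x_n)^{\gamma} = \eta^{k\gamma} \tilde x_n^{\gamma}$ shows that $v$ lies in $S^*(\lambda,\Lambda,\tilde f)$ on $Q_1^+$ with boundary data $\tilde g$ on $\partial_p Q_1^+ \cap \{x_n=0\}$, where
\begin{equation*}
    \tilde f(\tilde x, \tilde t) = \eta^{k(1-\gamma-\alpha)} f(\eta^k \tilde x, \eta^{k(2-\gamma)} \tilde t), \qquad
    \tilde g(\tilde x, \tilde t) = \eta^{-k(1+\alpha)} g(\eta^k \tilde x, \eta^{k(2-\gamma)} \tilde t).
\end{equation*}
(The linear subtraction $a_k \eta^k \tilde x_n$ has zero time derivative and zero Hessian, so it contributes nothing.) The inductive hypothesis gives $\|v\|_{L^{\infty}(Q_1^+)} \leq 1$.

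The key verification — and the place where the constraint $\alpha \leq 1-\gamma$ is used — is that the rescaled data still satisfy the smallness required by \Cref{lem1:gen}. For the forcing term, $1-\gamma-\alpha \geq 0$ gives $\|\tilde f\|_{L^{\infty}(Q_1^+)} \leq \eta^{k(1-\gamma-\alpha)} \delta \leq \delta$. For the boundary data, the hypothesis on $g$ and the definition of the intrinsic metric yield
\begin{equation*}
    |\tilde g(\tilde x, \tilde t)| \leq \frac{\delta}{2^{1+\alpha}} \left(|\tilde x| + |\tilde t|^{\frac{1}{2-\gamma}}\right)^{1+\alpha} \leq \delta
\end{equation*}
on $\partial_p Q_1^+ \cap \{x_n = 0\}$, since the parenthesized quantity is bounded by $2$ there. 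Thus \Cref{lem1:gen} applies to $v$ and produces $\bar a \in \mathbb{R}$ with $|\bar a| \leq C_1$ and $\|v - \bar a \tilde x_n\|_{L^{\infty}(Q_{\eta}^+)} \leq \eta^{1+\alpha}$.

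Undoing the rescaling by setting $a_{k+1} \coloneqq a_k + \eta^{k\alpha} \bar a$ and substituting $x = \eta^k \tilde x$ (so $\tilde x_n = x_n / \eta^k$) converts the conclusion into $\|u - a_{k+1} x_n\|_{L^{\infty}(Q_{\eta^{k+1}}^+)} \leq \eta^{(k+1)(1+\alpha)}$, while $|a_{k+1} - a_k| = \eta^{k\alpha}|\bar a| \leq C_1 \eta^{k\alpha}$. This closes the induction. The only substantive point is the bookkeeping of the scaling exponents — verifying that the intrinsic scaling precisely cancels the $x_n^{\gamma}$ factor, and that the constraint $\alpha \leq 1 - \gamma$ (already assumed in the hypotheses) is exactly what is needed to keep $\tilde f$ small after rescaling; once these two computations are in place the argument is essentially a repeated application of \Cref{lem1:gen}.
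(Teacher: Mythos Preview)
Your proof is correct and follows essentially the same approach as the paper's: induction with base case $a_{-1}=a_0=0$, the intrinsic rescaling $v(y,s)=(u(\eta^k y,\eta^{k(2-\gamma)}s)-a_k\eta^k y_n)/\eta^{k(1+\alpha)}$, verification that $\tilde f$ and $\tilde g$ satisfy the smallness hypotheses of \Cref{lem1:gen} (using $\alpha\le 1-\gamma$ for $\tilde f$), application of that lemma, and unwinding the scaling to define $a_{k+1}=a_k+\eta^{k\alpha}\bar a$. The argument and the bookkeeping match.
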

 
\begin{proof}
	We argue by induction. For $k=0$, by setting $a_{-1}=a_0=0$, the conditions immediately hold. Suppose that the conclusion holds for $k \geq 0$. We claim that the conclusion also holds for $(k+1)$.
	
	For this purpose, let $r=\eta^k$, $y=x/r$, $s=t/r^{2-\gamma}$, and 
	\begin{align*}
		v(y,s) \coloneqq \frac{u(x,t)-a_kx_n}{r^{1+\alpha}}.
	\end{align*}
Then $v$ satisfies
	\begin{equation*}
		\left\{\begin{aligned}
			v & \in S^*(\lambda,\Lambda, \widetilde{f}) && \text{in } Q_1^+ \\
			v &= \widetilde{g} && \text{on } \partial_p Q_1^+\cap \{y_n=0\},
		\end{aligned}\right.
	\end{equation*}
where
\begin{equation*}
	\widetilde{f}(y,s)\coloneqq r^{1-\alpha-\gamma}f(x,t) \quad \text{and} \quad  \widetilde{g}(y,s)\coloneqq \frac{g(x,t)-a_kx_n}{r^{1+\alpha}}.
\end{equation*}
Then it immediately follows that $\|v\|_{L^{\infty}(Q_1^+)} \leq 1$, $\|\widetilde{f}\|_{L^{\infty}(Q_1^+)} \leq \delta$, and 
\begin{align*}
	 |\widetilde{g}(y,s)| & \leq \frac{1}{r^{1+\alpha}}\cdot \frac{\delta}{2^{1+\alpha}} \cdot 2^{1+\alpha}r^{1+\alpha}  = \delta \quad \text{for all } (y,s) \in \partial_p Q_1^+\cap \{y_n=0\}.
\end{align*}
Thus, by applying \Cref{lem1:gen} to $v$, there exists a constant $\widetilde{a}\in\mathbb{R}$ such that 
\begin{align}\label{eq:induc1_bd}
	\|v-\widetilde{a} y_n\|_{L^{\infty}(Q_{\eta}^+)} \leq \eta^{1+\alpha}
\end{align}
and
\begin{align}\label{eq:induc2_bd}
	|\widetilde{a}| \leq C_1.
\end{align}
We now let $a_{k+1} \coloneqq a_k +r^{\alpha} \widetilde{a}$. Then \eqref{eq:induc2_bd} implies that
\begin{align*}
	|a_{k+1}-a_k|  \leq  C_1 \eta^{k\alpha}.
\end{align*}
Finally, \eqref{eq:induc1_bd} shows that  
\begin{align*}
	|u(x,t)-a_{k+1}x_n| &=|u(x,t)-a_kx_n-r^{\alpha}\widetilde{a}x_n | \\
	&\leq r^{1+\alpha} |v(x/r,t/r^{2-\gamma})-\widetilde{a}(x_n/r)| \\
	&\leq \eta^{(k+1)(1+\alpha)}
\end{align*}
for all $(x,t) \in  Q_{\eta^{k+1}}^+$, as desired.
\end{proof}
We are now ready to prove the first main theorem, \Cref{thm:main}.
\begin{proof}[Proof of \Cref{thm:main}]
	Without loss of generality, we can assume that $(x_0,t_0)=(0,0)$. We claim that the assumptions in \Cref{lem2:gen} hold after the appropriate reduction argument. Since $g\in C^{1,\alpha}(0,0)$, there exists a linear function $L_g$ such that
	\begin{equation*}
		|g(x,t)-L_g(x)| \leq [g]_{C^{1,\alpha}(0,0)} (|x|+|t|^{\frac{1}{2-\gamma}})^{1+\alpha} \quad \text{for all }(x,t) \in \partial_p Q_1^+ \cap \{x_n=0\}.
	\end{equation*}
	Then $\overline{u} \coloneqq u-L_g$ satisfies 
	\begin{equation*} 
		\left\{\begin{aligned}
			\overline{u} & \in S^*(\lambda,\Lambda, f) && \text{in } Q_1^+  \\
			\overline{u} &= \overline{g} && \text{on } \partial_p Q_1^+ \cap \{x_n=0\},
		\end{aligned}\right.
	\end{equation*}
	where $\overline{g} \coloneqq g - L_g$ satisfies
	\begin{equation*}
		|\overline{g}(x,t)| \leq [g]_{C^{1,\alpha}(0,0)} (|x|+|t|^{\frac{1}{2-\gamma}})^{1+\alpha} \quad \text{for all }(x,t) \in \partial_p Q_1^+ \cap \{x_n=0\} .
	\end{equation*}
	For $K\coloneqq \|\overline{u}\|_{L^{\infty}(Q_1^+)}+\|f\|_{L^{\infty}(Q_1^+)}/\delta + 2^{1+\alpha}\|\overline{g}\|_{C^{1,\alpha}(0,0)}/\delta+1$, by considering  
	\begin{align*}
		\widetilde{u}(x,t) \coloneqq \overline{u}(x,t)/K, \quad \widetilde{f}(x,t) \coloneqq f(x,t)/K, \quad \text{and} \quad \widetilde{g}(x,t) \coloneqq \overline{g}(x,t)/K,
	\end{align*}
we may assume that $\|u\|_{L^{\infty}(Q_1^+)} \leq 1$, $\|f\|_{L^{\infty}(Q_1^+)} \leq \delta$, and 
	\begin{equation*} 
		|g(x,t)| \leq \frac{\delta}{2^{1+\alpha}} (|x|+|t|^{\frac{1}{2-\gamma}})^{1+\alpha} \quad \text{for all }  (x,t) \in  \partial_p Q_1^+ \cap \{x_n=0\}.
	\end{equation*}
Thus, we can apply \Cref{lem2:gen} for $u$ to find a sequence $\{a_k\}_{k=-1}^{\infty}$ satisfying \eqref{eq:lin1_bd}. Indeed, by standard argument, we obtain limit $a$ such that $a_k \to a$ satisfying
\begin{align*}
	|a_k-a| \leq C\eta^{k \alpha }.
\end{align*}
Finally, for any $(x,t) \in Q_1^+$, there exists $j \geq 0$ such that $\eta^{j+1} \leq \max\{ |x|, |t|^{\frac{1}{2-\gamma}} \}< \eta^j$. Then we conclude that
\begin{align*}
	|u(x,t)-ax_n| \leq |u(x,t)-a_j x_n|+|a_j -a| |x_n| \leq C(|x|+|t|^{\frac{1}{2-\gamma}})^{1+\alpha},
\end{align*}
which implies that $u$ is $C^{1, \alpha}$ at $(0,0)$.
\end{proof}
%
%
\section{The Cauchy-Dirichlet Problem} \label{sec:dirichlet}

In this section, we prove the boundary $C^{2,\alpha}$-regularity for solutions of \eqref{eq:c2a_sg} and the solvability of the Cauchy-Dirichlet problems \eqref{eq:diri_model}. Unlike boundary $C^{1,\alpha}$-regularity, when studying boundary $C^{2,\alpha}$-regularity, the conditions for $f$ in the degenerate equation and the singular equation are different. We first prove two lemmas that can be applied in common, and it is proved by dividing the case of degenerate equations and the case of singular equations.

Note that for any $\Omega\ssubset \mathbb{R}_+^n$, \eqref{eq:model} is a uniformly parabolic equation in $\Omega_T$, so we know that solutions $u$ of \eqref{eq:model} are in $C^2(Q_1^+)$.
\begin{lemma} \label{lem:c2a_fz}
Let $u \in C^2(Q_1^+)\cap C(\overline{Q_1^+})$ be a viscosity solution of  
\begin{equation} \label{eq:model_fz}
	\left\{\begin{aligned}
		u_t- x_n^{\gamma} F(D^2 u)&= 0 && \text{in } Q_1^+ \\
		u&=0 && \text{on } \partial_p Q_1^+\cap \{x_n=0\}.
	\end{aligned}\right.
\end{equation}
Then $u \in C^{2,\alpha}(0,0)$ for any $\alpha \in(0,\overline{\alpha})$, i.e., there exists a polynomial $P(x)$ with $\deg P \leq 2$ such that
\begin{equation*}
	|u(x,t)-P(x)| \leq C_2( \|u\|_{L^{\infty}(Q_1^+)} + |F(O_n)|) |x|^{1+\alpha} x_n \quad \text{for all } (x,t) \in \overline{Q_{1/2}^+},
\end{equation*}
$\|P\| \leq C_2$ and $F(D^2 P) =0$, where $C_2>1$ is a constant depending only on $n$, $\lambda$, $\Lambda$, $\gamma$, and $\alpha$. 

Moreover, $u_t \in C^{1,\overline{\alpha}}(0,0)$, i.e., there exists a constant $a \in \mathbb{R}$ such that
\begin{equation*}
	|u_t(x,t)-ax_n| \leq C_3( \|u\|_{L^{\infty}(Q_1^+)} + |F(O_n)|) (|x| + |t|^{\frac{1}{2-\gamma}})^{\overline{\alpha}} x_n \quad \text{for all } (x,t) \in \overline{Q_{1/2}^+}
\end{equation*}
and $|a| \leq C_3$, where $C_3>1$ is a constant depending only on $n$, $\lambda$, $\Lambda$, $\gamma$, and $\overline{\alpha}$.
\end{lemma}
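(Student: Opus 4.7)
The plan is to treat the two claims separately; both ultimately rest on the boundary $C^{1,\overline{\alpha}}$-estimate (\Cref{thm:main}) applied to carefully chosen auxiliary functions. First I would normalize $\|u\|_{L^\infty(Q_1^+)}+|F(O_n)|\le 1$ and reduce to $F(O_n)=0$ by subtracting the stationary quadratic $P_0(x)=cx_n^2$, where $c$ is the unique root of $F(2c\, e_n\otimes e_n)=0$ (existence and $|c|\le C|F(O_n)|$ follow from uniform ellipticity). Passing to $\widetilde u=u-P_0$ and $\widetilde F(M):=F(M+2c\,e_n\otimes e_n)$, we may assume $F(O_n)=0$; the extra $|F(O_n)|$ in the final bound records the cost of this shift.

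The moreover statement would follow by applying \Cref{thm:main} to $u_t$. Since $F$ is independent of $(x,t)$ and $u\in C^2(Q_1^+)$, differentiating the equation in $t$ shows $u_t$ satisfies the $S^*(\lambda,\Lambda,0)$ inequalities in $Q_1^+$. Because $u(x',0,t)\equiv 0$, also $u_t\equiv 0$ on the flat boundary. An $L^\infty$-bound on $u_t$ comes from the identity $u_t=x_n^\gamma F(D^2u)$ combined with interior $C^2$-control on $u$ away from $\{x_n=0\}$ and the Lipschitz estimate (\Cref{lem:u<x}) near it. \Cref{thm:main} applied to $u_t$, after a harmless rescaling to $Q_{3/4}^+$, then yields the $C^{1,\overline{\alpha}}(0,0)$-expansion with linear term $ax_n$.

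For the main part I would establish an approximation lemma and iterate. The approximation lemma reads: for each $\alpha\in(0,\overline{\alpha})$ there exist $\eta\in(0,1/2)$ and $C_0>0$ (depending on $n,\lambda,\Lambda,\gamma,\alpha$) such that any normalized solution $u$ with $F(O_n)=0$ admits a degree-$2$ polynomial $P$ with $P(x',0)=0$, $F(D^2P)=0$, $\|P\|\le C_0$, and $\|u-P\|_{L^\infty(Q_\eta^+)}\le\eta^{2+\alpha}$. I would prove this by contradiction and compactness: a counterexample sequence $(u_k,F_k)$ yields a uniform limit $u_\infty$ satisfying the same homogeneous equation with a uniformly parabolic concave $F_\infty$ and $F_\infty(O_n)=0$. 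The candidate $P_\infty$ is built by reading off coefficients from (i) the $C^{1,\overline{\alpha}}$-expansion of $u_\infty$ at $(0,0)$, (ii) the analogous expansions of each tangential derivative $u_{\infty,x_i}$ for $i<n$ (which lies in $S^*$ by differentiating in $x_i$ and vanishes on $\{x_n=0\}$), and (iii) the expansion of $u_{\infty,t}$ supplied by the moreover part. The diagonal coefficient $\partial_{nn}^2 P_\infty$ is then fixed by the constraint $F_\infty(D^2P_\infty)=0$, which I extract from $F_\infty(D^2u_\infty)=u_{\infty,t}/x_n^\gamma$ by letting $x_n\to 0^+$: since $|u_{\infty,t}|\le Cx_n$ by \Cref{lem:u<x} and $\gamma<1$, the right-hand side vanishes on the boundary. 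With the approximation lemma in hand, I iterate at scales $\eta^k$, passing at each step to the shifted operator $F(\cdot+D^2P_k)$ to preserve $F(D^2P_{k+1})=0$; the coefficients of $P_{k+1}-P_k$ decay geometrically, so $P_k\to P$ with $\|P\|\le C$. A final application of \Cref{lem:u<x} to $u-P$, which lies in $S^*(\lambda,\Lambda,0)$ and vanishes on $\{x_n=0\}$, supplies the extra $x_n$-factor and delivers $|u(x,t)-P(x)|\le C|x|^{1+\alpha}x_n$ on $Q_{1/2}^+$.

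The hardest step is the approximation lemma, and within it the construction of $P_\infty$ and verification of $F_\infty(D^2P_\infty)=0$ for the limit. Because the limit equation is itself degenerate/singular rather than uniformly parabolic, one cannot invoke Evans--Krylov on the limit to obtain a $C^{2,\overline{\alpha}}$-expansion of $u_\infty$ for free; instead the expansion must be assembled coefficient by coefficient from the $C^{1,\overline{\alpha}}$-regularity of the various first-order derivatives of $u_\infty$. This forces a careful logical ordering: the moreover part has to be proved first and independently, since it feeds into the construction of $P_\infty$ in the main part.
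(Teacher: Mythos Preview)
Your treatment of the ``moreover'' part (differentiate in $t$, obtain $u_t\in S(\lambda,\Lambda,0)$ with zero flat-boundary data, then apply the $C^{1,\overline{\alpha}}$ estimate) is exactly what the paper does. The main part, however, is handled in the paper by a much shorter route that you overlook. Once $|u_t|\le Cx_n$ is established via \Cref{lem:u<x}, the paper simply observes that $x_n^{-\gamma}u_t$ is bounded (since $\gamma<1$), so at each fixed time the function $x\mapsto u(x,t)$ is a viscosity solution of the \emph{uniformly elliptic} problem $F(D^2u(\cdot,t))=x_n^{-\gamma}u_t(\cdot,t)$ in $B_{1/2}^+$ with zero flat-boundary data and bounded right-hand side. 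Known elliptic boundary $C^{2,\alpha}$ results (Silvestre--Sirakov, Lian--Zhang) then supply the quadratic polynomial $P$ with $F(D^2P)=0$ directly, and translation invariance in $x'$ and $t$ (the operator is constant-coefficient) extends the estimate from one time slice to all of $\overline{Q_{1/2}^+}$.

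Your compactness-and-iteration scheme is not only more laborious, it has a genuine gap. As you correctly note, the limit problem is of the same degenerate type, so compactness buys nothing and the $C^{2,\alpha}$ expansion for $u_\infty$ must be proved directly. Integrating the $C^{1,\overline{\alpha}}$ expansions of $u_{\infty,x_i}$ ($i<n$) and $u_{\infty,t}$ does control $u_\infty(x',x_n,t)-u_\infty(0,x_n,0)-\sum_{i<n}a_ix_ix_n$ to order $r^{2+\alpha}$, but you still have to show $|u_\infty(0,x_n,0)-bx_n-cx_n^2|\le Cx_n^{2+\alpha}$ for the $c$ you defined by $F_\infty(D^2P_\infty)=0$. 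Your proposed justification (let $x_n\to0^+$ in $F_\infty(D^2u_\infty)=x_n^{-\gamma}u_{\infty,t}$) presupposes that $D^2u_\infty(0,x_n,0)$ has a limit, in particular that the tangential--tangential and mixed entries converge; but this is part of what you are trying to prove, and the first-order expansions you have assembled do not deliver it (an asymptotic expansion cannot be differentiated). There is no $C^{1,\overline{\alpha}}$ expansion available for $u_{\infty,x_n}$ either, since differentiating in $x_n$ hits the weight $x_n^\gamma$. The paper's elliptic reduction sidesteps exactly this difficulty by obtaining all second-order information, including the $x_n^2$ coefficient, in one stroke from the elliptic theory.
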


\begin{proof}
	Without loss of generality, we may assume that $\|u\|_{L^{\infty}(Q_1^+)}+|F(O_n)| \leq 1$. Consider the following difference quotient 
	\begin{equation*}
		v^h(x,t) \coloneqq \frac{u(x,t+h)- u(x,t)}{h} \quad \text{for } h \in (0,1/4).
	\end{equation*}
	Since $v^h \in C^2(Q_{3/4}^+)$, the following inequalities hold in the classical sense:
	\begin{equation*}
		x_n^{\gamma} \mathcal{M}^-_{\lambda,\Lambda}  (D^2 v^h ) \leq v_t^h \leq x_n^{\gamma} \mathcal{M}^+_{\lambda,\Lambda}  (D^2 v^h ) \quad \text{in } Q_{3/4}^+.
	\end{equation*}
Thus, the stability theorem yields that 
\begin{equation} \label{eq:ut}
	\left\{\begin{aligned}
		u_t &\in S(\lambda,\Lambda, 0) && \text{in } Q_{3/4}^+ \\
		u_t &=0 && \text{on } \partial_p Q_{3/4}^+\cap \{x_n=0\}.
	\end{aligned}\right.
\end{equation}
By Lipschitz estimate [\Cref{lem:u<x}], we have
\begin{equation*}
	|x_n^{-\gamma}u_t(x,t)| \leq C_1 x_n^{1-\gamma} \leq C_1 \quad \text{for all } (x,t) \in \overline{Q_{1/2}^+},
\end{equation*}
where $C_1>1$ is a constant depending only on $n$, $\lambda$, $\Lambda$, and $\gamma$. So, the parabolic Dirichlet problem \eqref{eq:model_fz} can be regarded as the following elliptic Dirichlet problem: 
\begin{equation} \label{eq:model_ell}
	\left\{\begin{aligned}
		F(D^2 u_0)&= f && \text{in } B_{1/2}^+ \\
		u_0 &=0 && \text{on } \partial B_{1/2}^+\cap \{x_n=0\},
	\end{aligned}\right.
\end{equation}
where $u_0(x) = u(x,0)$ and $f(x) = x_n^{-\gamma}u_t(x,0)$. By the result of the boundary regularity of the elliptic equations; see \cite[Theorem 1.2]{SS14} or \cite[Theorem 1.8]{LZ20}, we know that for any $\alpha \in(0,\overline{\alpha})$, there exists a polynomial $P(x)$ with $\deg P \leq 2$ such that
\begin{equation*}
	|u(x,0)-P(x)| \leq C_2 |x|^{2+\alpha} \quad \text{for all } (x,t) \in \overline{B_{1/2}^+}
\end{equation*}
and $\|P\| \leq C_2$, where $C_2>1$ is a constant depending only on $n$, $\lambda$, $\Lambda$, $\gamma$, and $\alpha$. For the same reason as \Cref{rmk_trans_inv}, we know that 
\begin{equation*}
	|u(x,t)-P(x)| \leq C_2 |x|^{1+\alpha} x_n \quad \text{for all } (x,t) \in \overline{Q_{1/2}^+}
\end{equation*}
holds by considering the translation in the $x'$ and $t$ directions. Moreover, $F(D^2 P) =0$ holds by the definition of viscosity solution.

Finally, since $u_t$ satisfies \eqref{eq:ut}, we can apply \Cref{lem:c1a_f0_g0} to $u_t$, and hence there exists a constant $a \in \mathbb{R}$ such that
\begin{equation*}
	|u_t(x,t)-ax_n| \leq C_3 (|x| + |t|^{\frac{1}{2-\gamma}})^{\overline{\alpha}} x_n \quad \text{for all } (x,t) \in \overline{Q_{1/2}^+}
\end{equation*}
and $|a| \leq C_3$, where $C_3>1$ is a constant depending only on $n$, $\lambda$, $\Lambda$, $\gamma$, and $\overline{\alpha}$.
\end{proof}

\begin{lemma} \label{lem1:gen_c2a}
	Let $C_2>1$ be as in \Cref{lem:c2a_fz}. For any $\alpha \in (0,\overline{\alpha})$ with $\alpha\leq 1-\gamma$, there exists $\delta >0$ depending only on $n$, $\lambda$, $\Lambda$, $\gamma$, and $\alpha$ such that if $u \in C^2(Q_1^+)\cap C(\overline{Q_1^+})$ is a viscosity solution of 
\begin{equation*}
	\left\{\begin{aligned}
		u_t - x_n^{\gamma}F(D^2 u,x,t)& = f && \text{in } Q_1^+ \\
		u&=g && \text{on } \partial_p Q_1^+\cap \{x_n=0\}
	\end{aligned}\right.
\end{equation*}
with $\|u\|_{L^{\infty}(Q_1^+)} + |F(O_n,0,0)| \leq 1$, $u(0,0) =0$, $Du(0,0)=0$, $\|\beta^1\|_{L^{\infty}(Q_1^+)}\leq \delta$, $\|\beta^2\|_{L^{\infty}(Q_1^+)}\leq \delta$, $\|f\|_{L^{\infty}(Q_1^+)}\leq\delta$, and $\|g\|_{L^{\infty}(\partial_p Q_1^+\cap \{x_n=0\})} + \|g\|_{C^{1,\alpha}(0,0)} \leq \delta$, then there exists a polynomial $P(x)= \sum_{i=1}^n a_i x_i x_n$ such that
\begin{equation*}
	\|u- P \|_{L^{\infty}(Q_{\eta}^+)} \leq \eta^{2+\alpha},
\end{equation*}
$\sum_{i=1}^n |a_i| \leq C_2$, and $F(D^2 P,0,0)=0$, where $\eta>0$ is a constant depending only on $n$, $\lambda$, $\Lambda$, $\gamma$, and $\alpha$.
\end{lemma}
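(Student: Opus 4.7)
The plan is a standard compactness-contradiction argument that reduces the perturbed setting to the model situation handled by \Cref{lem:c2a_fz}. Suppose the conclusion fails. Then for each $k \in \N$ one can take $\delta = 1/k$ and find operators $F_k$, data $f_k$, $g_k$, and viscosity solutions $u_k$ satisfying all the hypotheses with parameter $1/k$, but for which no polynomial $P$ of the prescribed form $\sum a_i x_i x_n$ with $F_k(D^2 P,0,0)=0$ and $\sum|a_i|\le C_2$ approximates $u_k$ in $L^\infty(Q_\eta^+)$ to order $\eta^{2+\alpha}$, where $\eta$ will be fixed at the end.

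First I would extract limits. The $u_k$ are uniformly bounded; interior Hölder estimates away from $\{x_n=0\}$ follow from \Cref{int_harnack}, and a uniform $C^{1,\alpha}(0,0)$ bound up to $\{x_n=0\}$ follows from \Cref{thm:main} since $\|g_k\|_{C^{1,\alpha}(0,0)}\le 1/k$. After passing to a subsequence, $u_k\to\bar u$ locally uniformly in $\overline{Q_{3/4}^+}$. On the operator side, \ref{F1} makes each $F_k(\cdot,0,0)$ Lipschitz on $\mathcal{S}^n$ with $|F_k(O_n,0,0)|\le 1$, so Arzelà--Ascoli yields (along a further subsequence) $F_k(\cdot,0,0)\to F_\infty$ locally uniformly; combining with \ref{F3} and $\|\beta_k^i\|_\infty\le 1/k$ upgrades this to $F_k(M,x,t)\to F_\infty(M)$ uniformly on compact subsets of $\mathcal{S}^n\times\overline{Q_1^+}$. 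The stability theorem for viscosity solutions, together with $\|f_k\|_\infty,\|g_k\|_\infty\to 0$, shows that $\bar u$ solves
\begin{equation*}
    \bar u_t - x_n^{\gamma} F_\infty(D^2 \bar u) = 0 \quad \text{in } Q_{3/4}^+, \qquad \bar u = 0 \quad \text{on } \partial_p Q_{3/4}^+\cap\{x_n=0\},
\end{equation*}
and $F_\infty$ inherits \ref{F1} and \ref{F2}. The uniform $C^{1,\alpha}(0,0)$ estimate combined with $u_k(0,0)=0$ and $Du_k(0,0)=0$ passes to the limit, so $\bar u$ vanishes to first order at $(0,0)$.

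Next I would apply \Cref{lem:c2a_fz} to $\bar u$ and $F_\infty$, obtaining a polynomial $\bar P$ with $\deg\bar P\le 2$, $\|\bar P\|\le C_2$, $F_\infty(D^2\bar P)=0$, and
\begin{equation*}
    |\bar u(x,t)-\bar P(x)| \le C_2\, |x|^{1+\overline{\alpha}}\, x_n \quad \text{in } \overline{Q_{1/2}^+}.
\end{equation*}
Since $\bar u\equiv 0$ on $\{x_n=0\}$, letting $x_n\to 0$ forces $\bar P(x',0)=0$, so $\bar P=x_n\, Q(x)$ with $\deg Q\le 1$; the vanishing of $D\bar u(0,0)$ then forces $Q(0)=0$, yielding $\bar P(x)=\sum_i a_i x_i x_n$ with $\sum |a_i|\le C_2$.

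The delicate step will be producing, for each $k$, a polynomial $P_k$ of the \emph{same} structure which \emph{simultaneously} satisfies $F_k(D^2 P_k,0,0)=0$. I would freeze $a_1,\dots,a_{n-1}$ from $\bar P$ and set $P_k(x)=\sum_{i<n} a_i x_i x_n + c_k x_n^2$; uniform ellipticity \ref{F1} makes the map $c\mapsto F_k(D^2 P_k|_{c_k=c},0,0)$ strictly increasing and bi-Lipschitz, so a unique $c_k$ exists, and the convergence $F_k(\cdot,0,0)\to F_\infty$ combined with $F_\infty(D^2\bar P)=0$ gives $c_k\to a_n$, hence $P_k\to\bar P$ uniformly on compact sets. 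Choosing $\eta$ so that $2^{2+\overline{\alpha}} C_2\, \eta^{\overline{\alpha}-\alpha}\le 1/2$ yields $\|\bar u-\bar P\|_{L^\infty(Q_\eta^+)}\le \tfrac12\eta^{2+\alpha}$; combined with $u_k\to\bar u$ and $P_k\to\bar P$ uniformly on $\overline{Q_\eta^+}$, we obtain $\|u_k-P_k\|_{L^\infty(Q_\eta^+)}\le \eta^{2+\alpha}$ for all large $k$, contradicting the choice of the sequence. The main obstacle is precisely this perturbation step: the approximating polynomial from \Cref{lem:c2a_fz} is adapted to $F_\infty$, not to $F_k$, and we must modify only the $x_n^2$ coefficient (and no other) to respect both the required structural form and the nonlinear equation at the base point.
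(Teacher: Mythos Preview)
Your argument follows the same compactness--contradiction scheme as the paper: extract a subsequence so that $u_k\to\bar u$ and $F_k(\cdot,0,0)\to F_\infty$, apply stability to get a solution of the frozen equation, invoke \Cref{thm:main} to force $\bar u(0,0)=0$, $D\bar u(0,0)=0$, and then use \Cref{lem:c2a_fz} to obtain $\bar P$. The only substantive difference is the perturbation step. The paper corrects $\bar P$ by adding $\theta_{k_j}|x|^2/2$, i.e.\ perturbs the full Hessian by $\theta_{k_j}I_n$; you instead adjust only the coefficient of $x_n^2$, using the strict monotonicity of $c\mapsto F_k(D^2\bar P+2c\,e_n\otimes e_n,0,0)$ coming from \ref{F1}. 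Your choice has the advantage that the resulting $P_k$ retains the exact structural form $\sum_i a_i x_i x_n$ demanded by the statement, whereas $\bar P+\theta_{k_j}|x|^2/2$ does not, so your handling of this step is in fact cleaner.

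Two small points to tighten. First, \Cref{lem:c2a_fz} gives the bound with exponent strictly below $\overline{\alpha}$, so write $|\bar u-\bar P|\le C_2|x|^{1+\alpha'}x_n$ for some fixed $\alpha'\in(\alpha,\overline{\alpha})$ (the paper takes $\alpha'=(\alpha+\overline{\alpha})/2$) rather than with $\overline{\alpha}$ itself. Second, after replacing $a_n$ by $c_k$ you need $\sum_{i<n}|a_i|+|c_k|\le C_2$; since $c_k\to a_n$ this holds for large $k$ provided the constant coming out of \Cref{lem:c2a_fz} is taken strictly smaller than the $C_2$ in the present statement, a harmless adjustment you should make explicit.
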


\begin{proof}
We prove by contradiction; we suppose the conclusion does not hold. In other words, there exist sequences $u_k$, $f_k$, $g_k$, $\beta^1_k$, $\beta^2_k$, and $F_k$ such that $u_k$ is a viscosity solution of 
	\begin{equation*}
		\left\{\begin{aligned}
			\partial_ t u_k - x_n^{\gamma} F_k(D^2 u_k,x,t) &= f_k && \text{in } Q_1^+ \\
			u_k &= g_k && \text{on } \partial_p Q_1^+ \cap \{x_n=0\},
		\end{aligned}\right.
	\end{equation*}
with $\|u_k\|_{L^{\infty}(Q_1^+)} + |F_k(O_n,0,0)| \leq 1$, $u_k(0,0) =0$, $Du_k(0,0)=0$, $\|\beta^1_k\|_{L^{\infty}(Q_1^+)}\leq 1/k$, $\|\beta^2_k\|_{L^{\infty}(Q_1^+)}\leq 1/k$, $\|f_k\|_{L^{\infty}(Q_1^+)}\leq 1/k$, and $\|g_k\|_{L^{\infty}(\partial_p Q_1^+\cap \{x_n=0\})} + \|g_k\|_{C^{1,\alpha}(0,0)} \leq 1/k$. Moreover, for any polynomial $P(x)= \sum_{i=1}^n a_i x_i x_n$ satisfying $\sum_{i=1}^n|a_i| \leq C_2$ and $F_k(D^2 P,0,0)=0$, we have
\begin{align}\label{eq:cont_c2a}
	\|u_k- P\|_{L^{\infty}(Q_{\eta}^+)} >\eta^{2+\alpha},
\end{align}
where $\eta \in (0,1)$ will be determined later.

As in the proof of \Cref{lem1:gen}, $\{u_k\}_{k=1}^{\infty}$ is uniformly bounded in $L^{\infty}(Q_1^+)$ and equicontinuous in compact sets of $Q_1^+$. Thus, by Arzela-Ascoli theorem, there exist a subsequence $\{u_{k_j}\}_{j=1}^{\infty}$ of $\{u_k\}_{k=1}^{\infty}$ and a limit function $\overline{u} \in C^2(Q_1^+) \cap C(\overline{Q_1^+})$ such that $u_{k_j} \to \overline{u}$ uniformly in compact sets of $Q_1^+$ as $j \to \infty$. 

Moreover, since $|F_k(O_n,0,0)| \leq 1$ and $F_k$ are Lipschitz continuous in $M$ with a uniform Lipschitz constant, $\{F_k\}_{k=1}^{\infty}$ is also uniformly bounded and equicontinuous in compact sets of $\mathcal{S}^n$. Then there exist a subsequence $\{F_{k_j}\}_{j=1}^{\infty}$ of $\{F_k\}_{k=1}^{\infty}$ and a limit operator $\overline{F}$ such that $F_{k_j}(\cdot,0,0) \to \overline{F}$ uniformly in compact sets of $\mathcal{S}^n$ as $j \to \infty$. It follows that 
\begin{align*}
	|F_{k_j}(M,x,t) - \overline{F}(M)| &\leq |F_{k_j}(M,x,t) - F_{k_j}(M,0,0)| + |F_{k_j}(M,0,0)- \overline{F}(M)| \\
	&\leq \beta^1_{k_j}(x,t) \|M\| + \beta^2_{k_j}(x,t) + |F_{k_j}(M,0,0)- \overline{F}(M)| \\
	&\leq \frac{1}{k_j} (\|M\| +1)+ |F_{k_j}(M,0,0)- \overline{F}(M)| 
\end{align*}
for all $M\in\mathcal{S}^n$, $(x,t) \in \overline{Q_1^+}$. Since the right-hand side above tends to zero uniformly in compact sets in $\mathcal{S}^n$ as $j \to \infty$, the stability theorem yields that 
\begin{equation*}
	\left\{\begin{aligned}
		\overline{u}_t -x_n^{\gamma} \overline{F}(D^2 \overline{u})&=0  && \text{in } Q_{3/4}^+ \\
		\overline{u} &= 0 && \text{on } \partial_p Q_{3/4}^+ \cap \{x_n=0\},
	\end{aligned}\right.
\end{equation*}
Since $u_{k_j}(0,0) =0$ and $Du_{k_j}(0,0)=0$, by applying \Cref{thm:main} to $u_{k_j}$, we have
\begin{equation} \label{kth_r1a}
	\|u_{k_j}\|_{L^{\infty}(Q_r^+)} \leq C r^{1+\alpha} \quad \text{for all } r \in (0,1),
\end{equation}
where $C>0$ is a constant depending only on $n$, $\lambda$, $\Lambda$, $\gamma$, and $\alpha$. Since $u_{k_j} \to \overline{u}$ uniformly, by letting $j \to \infty$ in \eqref{kth_r1a}, we obtain
\begin{equation*}
	\|\overline{u}\|_{L^{\infty}(Q_r^+)} \leq C r^{1+\alpha} \quad \text{for all } r \in (0,3/4).
\end{equation*}
Thus, we have $\overline{u}(0,0)=0$ and $D\overline{u}(0,0)=0$. Then, by \Cref{lem:c2a_fz}, there exists a polynomial $\overline{P}(x)=\sum_{i=1}^n \overline{a}_i x_i x_n$ such that
\begin{equation*}
	|\overline{u}(x,t)-\overline{P}(x)| \leq C_2 |x|^{1+\frac{\alpha+\overline{\alpha}}{2}} x_n \quad \text{for all } (x,t) \in \overline{Q_{1/2}^+}
\end{equation*}
and $\sum_{i=1}^n|\overline{a}_i| \leq C_2$ and $\overline{F}(D^2\overline{P})=0$. 

Now, we take $\eta$ small enough so that $n^{\frac{1}{2} + \frac{\alpha + \overline{\alpha}}{4}} \eta^{\frac{\overline{\alpha}-\alpha}{2}} C_2<1/2$. Then we observe that
\begin{align*}
	\|\overline{u}-\overline{P}\|_{L^{\infty}(Q_{\eta}^+)} <\eta^{2+\alpha}/2.
\end{align*}
On the other hand, since $F_{k_j}(D^2 \overline{P},0,0) \to \overline{F}(D^2\overline{P}) = 0$ as $j \to \infty$, there exists $\theta_{k_j} $ with $\theta_{k_j} \to 0$ and $|\theta_{k_j}| \leq 1$ such that $F_{k_j}(D^2\overline{P}+\theta_{k_j} I_n,0,0) = 0$. From \eqref{eq:cont_c2a}, we have
 \begin{equation*}
	\|u_{k_j}- \overline{P} - \theta_{k_j} |x|^2/2\|_{L^{\infty}(Q_{\eta}^+)} >\eta^{2+\alpha},
\end{equation*}
and hence by letting $k_j \to \infty$, we obtain
\begin{align*}
	\|\overline{u}-\overline{P}\|_{L^{\infty}(Q_{\eta}^+)} \ge\eta^{2+\alpha},
\end{align*}
which leads to the contradiction.
\end{proof}
\subsection{Degenerate equations}
In this subsection, we prove the boundary $C^{2,\alpha}$-regularity for solutions of degenerate equations. That is, it is assumed that $0<\gamma<1$. As a reminder, according to \Cref{rmk1}, the regularity of solutions of \eqref{eq:model} is at most $C^{1,1-\gamma}$, when the decay order of $x_n$ for $f$ is strictly less than $\gamma$. Thus, in dealing with degenerate equations, we assume that $f \equiv 0$. Actually, since $x_n^{\gamma} F(O_n,x,t)$ plays the role of $f$, it is the same as considering $f$ whose decay order of $x_n$ is $\gamma$.

\begin{lemma} \label{lem2:gen_c2a}
	Let  $C_2>1$, $\delta>0$, and $\eta >0$ be as in \Cref{lem1:gen_c2a}. Suppose that  $\alpha \in(0,\overline{\alpha})$ with $\alpha \leq 1-\gamma$ and $u \in  C^2(Q_1^+)\cap C(\overline{Q_1^+})$ is a viscosity solution of
\begin{equation*}
	\left\{\begin{aligned}
		u_t - x_n^{\gamma} F(D^2u,x,t)&=0 && \text{in } Q_1^+ \\
		u&=g && \text{on } \partial_p Q_1^+\cap \{x_n=0\}.
	\end{aligned}\right.
\end{equation*} 
with $\|u\|_{L^{\infty}(Q_1^+)} + |F(O_n,0,0)| \leq 1$, $u(0,0) =0$, $Du(0,0)=0$,
	\begin{align} 
		|\beta^1(x,t)| &\leq \frac{\delta}{K2^{\alpha+1}} (|x|+|t|^{\frac{1}{2-\gamma}})^{\alpha} \quad \text{for all } (x,t) \in \overline{Q_1^+} , \label{beta_hol1} \\
		|\beta^2(x,t)| &\leq \frac{\delta}{2^{\alpha+1}} (|x|+|t|^{\frac{1}{2-\gamma}})^{\alpha} \quad \text{for all } (x,t) \in \overline{Q_1^+},  \label{beta_hol2}
	\end{align}  
and
	\begin{align*}  
		|g(x,t)| &\leq \frac{\delta}{2^{2+\alpha}} (|x|+|t|^{\frac{1}{2-\gamma}})^{2+\alpha} \quad \text{for all } (x,t) \in \partial_p Q_1^+\cap \{x_n=0\},
	\end{align*}
	where $K>1$ is a constant depending only on $n$, $\lambda$, $\Lambda$, $\gamma$, and $\alpha$. Then there exists a sequence $\{P_k\}_{k=-1}^{\infty}$ of the form $P_k(x)= \sum_{i=1}^n a_i^k x_i x_n$
such that for all $k \geq 0$, we have 
\begin{align}\label{eq:quad1_bd}
	\|u-P_k\|_{L^{\infty}(Q_{\eta^k}^+)} \leq \eta^{k(2+\alpha)}, \quad  \sum_{i=1}^n |a^k_i-a^{k-1}_i| \leq  C_2 \eta^{(k-1)\alpha},  \quad \text{and} \quad F(D^2P_k,0,0)=0.
\end{align}
\end{lemma}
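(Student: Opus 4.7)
The plan is to prove \eqref{eq:quad1_bd} by induction on $k$, using Lemma \ref{lem1:gen_c2a} as the one-step approximation. The hypotheses \eqref{beta_hol1}, \eqref{beta_hol2} and the polynomial decay of $g$ are precisely the quantitative versions required to reproduce the smallness hypotheses of Lemma \ref{lem1:gen_c2a} after rescaling by $r=\eta^k$ at each level.

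For the base case $k=0$, take $P_{-1}=P_0\equiv 0$: the first bound reduces to $\|u\|_{L^\infty(Q_1^+)}\le 1$ and the second is vacuous. For the identity $F(D^2 P_0,0,0)=0$, one replaces $P_0$ by $(t_0/2)x_n^2$, where $t_0\in\mathbb{R}$ is chosen via \ref{F1} and $|F(O_n,0,0)|\le 1$ so that $F(t_0 e_n\otimes e_n,0,0)=0$; the size of this adjustment is universally bounded and can be absorbed into the universal constants.

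For the inductive step, assume $P_k(x)=\sum_{i=1}^n a_i^k x_i x_n$ satisfies \eqref{eq:quad1_bd}. Set $r=\eta^k$ and define the rescaled solution and operator
\begin{equation*}
    v(y,s)\coloneqq\frac{u(ry,r^{2-\gamma}s)-P_k(ry)}{r^{2+\alpha}},\qquad \tilde F(M,y,s)\coloneqq r^{-\alpha}F\bigl(r^\alpha M+D^2 P_k,\;ry,\;r^{2-\gamma}s\bigr).
\end{equation*}
A direct change of variables shows that $v$ is a viscosity solution of $v_s-y_n^\gamma\tilde F(D^2 v,y,s)=0$ in $Q_1^+$ with boundary datum $\tilde g(y,s)\coloneqq r^{-(2+\alpha)}g(ry,r^{2-\gamma}s)$ on $\{y_n=0\}$ (note that $P_k(x',0)\equiv 0$ since every monomial in $P_k$ contains the factor $x_n$). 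The operator $\tilde F$ is uniformly parabolic with the same constants, and $\tilde F(O_n,0,0)=r^{-\alpha}F(D^2 P_k,0,0)=0$ by the inductive hypothesis.

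I then verify the smallness hypotheses of Lemma \ref{lem1:gen_c2a} for $(v,\tilde F,\tilde g)$. The bound $\|v\|_{L^\infty(Q_1^+)}\le 1$ follows from induction, while $v(0,0)=0$ and $Dv(0,0)=0$ follow from $P_k(0)=0$ and $DP_k(0)=0$. The structural coefficients rescale as
\begin{equation*}
    \tilde\beta^1(y,s)=\beta^1(ry,r^{2-\gamma}s),\qquad \tilde\beta^2(y,s)=r^{-\alpha}\bigl[\beta^1(ry,r^{2-\gamma}s)\|D^2 P_k\|+\beta^2(ry,r^{2-\gamma}s)\bigr];
\end{equation*}
the Hölder bounds \eqref{beta_hol1}--\eqref{beta_hol2} yield $\|\tilde\beta^1\|_{L^\infty(Q_1^+)}\le\delta/(2K)$ and $\|\tilde\beta^2\|_{L^\infty(Q_1^+)}\le\delta\|D^2 P_k\|/(2K)+\delta/2$, while the hypothesis on $g$ gives $\|\tilde g\|_{L^\infty}+\|\tilde g\|_{C^{1,\alpha}(0,0)}\le\delta$ after an inconsequential tightening of constants.

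Applying Lemma \ref{lem1:gen_c2a} to $v$ produces $\tilde P(y)=\sum_i\tilde a_i y_i y_n$ with $\sum_i|\tilde a_i|\le C_2$, $\tilde F(D^2\tilde P,0,0)=0$, and $\|v-\tilde P\|_{L^\infty(Q_\eta^+)}\le\eta^{2+\alpha}$. Setting $a_i^{k+1}\coloneqq a_i^k+r^\alpha\tilde a_i$ and $P_{k+1}(x)\coloneqq\sum_i a_i^{k+1}x_i x_n$, the scaling identity $r^{2+\alpha}\tilde P(x/r)=r^\alpha\sum_i\tilde a_i x_i x_n$ yields $\|u-P_{k+1}\|_{L^\infty(Q_{\eta^{k+1}}^+)}\le\eta^{(k+1)(2+\alpha)}$, the increment bound $\sum_i|a_i^{k+1}-a_i^k|=r^\alpha\sum_i|\tilde a_i|\le C_2\eta^{k\alpha}$, and $F(D^2 P_{k+1},0,0)=F(D^2 P_k+r^\alpha D^2\tilde P,0,0)=r^\alpha\tilde F(D^2\tilde P,0,0)=0$, closing the induction.

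The main technical obstacle is the smallness of $\tilde\beta^2$: to absorb $\delta\|D^2 P_k\|/(2K)$ into $\delta/2$ one needs a uniform bound on $\|D^2 P_k\|$, available only by telescoping the second inequality in \eqref{eq:quad1_bd}:
\begin{equation*}
    \sum_{i=1}^n|a_i^k|\le\sum_{j=0}^k C_2\eta^{(j-1)\alpha}\le\frac{C_2}{\eta^\alpha(1-\eta^\alpha)}=:M_0,
\end{equation*}
so $\|D^2 P_k\|\le c(n)M_0$ for every $k$. This forces the hypothesis \eqref{beta_hol1} on $\beta^1$ to carry the additional factor $1/K$ with $K\ge c(n)M_0$ depending only on $n,\lambda,\Lambda,\gamma,\alpha$, which is precisely why the $\beta^1$ smallness is quantitatively stricter than the $\beta^2$ smallness in the statement of the lemma.
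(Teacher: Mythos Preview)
Your proof is correct and matches the paper's argument essentially line for line: the same intrinsic rescaling $v(y,s)=r^{-(2+\alpha)}(u(ry,r^{2-\gamma}s)-P_k(ry))$, the same rescaled operator $\tilde F$ and coefficients $\tilde\beta^1,\tilde\beta^2$, the same application of Lemma~\ref{lem1:gen_c2a}, and the same telescoping bound on $\|D^2 P_k\|$ to fix $K$. The only deviation is your base case: you adjust $P_0$ to $(t_0/2)x_n^2$ to force $F(D^2 P_0,0,0)=0$, but this is unnecessary (and slightly awkward, since it may push $\|u-P_0\|_{L^\infty}$ above $1$); the paper simply takes $P_0\equiv 0$ and observes that at step $k=0$ one has $\|v\|_{L^\infty}+|\tilde F(O_n,0,0)|=\|u\|_{L^\infty}+|F(O_n,0,0)|\le 1$ directly from the hypothesis, so Lemma~\ref{lem1:gen_c2a} applies without modification.
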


\begin{proof}
	We argue by induction. For $k=0$, by setting $P_{-1}=P_0\equiv0$, the conditions immediately hold. Suppose that the conclusion holds for $k \geq 0$. We claim that the conclusion also holds for $(k+1)$.
	
	For this purpose, let $r=\eta^k$, $y=x/r$, $s=t/r^{2-\gamma}$, and 
	\begin{align*}
		v(y,s) \coloneqq \frac{u(x,t)-P_k(x)}{r^{2+\alpha}}.
	\end{align*}
Then $v \in  C^2(Q_1^+)\cap C(\overline{Q_1^+})$ is a viscosity solution of
	\begin{equation*}
		\left\{\begin{aligned}
			v_s - y_n^{\gamma} \widetilde{F}(D^2 v,y,s)& = 0 && \text{in } Q_1^+ \\
			v &= \widetilde{g} && \text{on } \partial_p Q_1^+\cap \{y_n=0\},
		\end{aligned}\right.
	\end{equation*}
where
\begin{equation*}
	\widetilde{F}(M,y,s)\coloneqq r^{-\alpha}F(r^\alpha M + D^2 P_k,x,t) \quad \text{and} \quad  \widetilde{g}(y,s)\coloneqq \frac{g(x,t)-P_k(x)}{r^{2+\alpha}}.
\end{equation*}
Then it immediately follows that $\|v\|_{L^{\infty}(Q_1^+)}+|\widetilde{F}(O_n,0,0)| \leq 1$, $v(0,0)=0$, $Dv(0,0)=0$, and 
\begin{align*}
	 |\widetilde{g}(y,s)| & \leq \frac{1}{r^{2+\alpha}}\cdot \frac{\delta}{2^{2+\alpha}} \cdot 2^{2+\alpha}r^{2+\alpha}  = \delta \quad \text{for all } (y,s) \in \partial_p Q_1^+\cap \{y_n=0\}.
\end{align*}
By the second inequality in \eqref{eq:quad1_bd}, there exists a constant $K>1$ depending only on $n$, $\lambda$, $\Lambda$, $\gamma$, and $\alpha$ such that $\|D^2 P_k\| \leq K$. Define 
\begin{equation*}
	\widetilde{\beta}^1(y,s) \coloneqq \beta^1(x,t)  \quad \text{and} \quad \widetilde{\beta}^2(y,s) \coloneqq  r^{-\alpha} K \beta^1(x,t) +r^{-\alpha} \beta^2(x,t).
\end{equation*}
Then it follows that $\|\widetilde{\beta}^1\|_{L^{\infty}(Q_1^+)}\leq \delta$ and $\|\widetilde{\beta}^2\|_{L^{\infty}(Q_1^+)}\leq \delta$. From \eqref{beta_hol1} and \eqref{beta_hol2}, we have
\begin{align*}
	|\widetilde{F}(M,y,s)- \widetilde{F}(M,0,0)|&\leq r^{-\alpha} |F(r^\alpha M + D^2 P_k,x,t)- F(r^\alpha M + D^2 P_k,0,0)| \\
	&\leq r^{-\alpha} \beta^1(x,t) \|r^\alpha M + D^2 P_k\| +  r^{-\alpha} \beta^2(x,t) \\
	&\leq\widetilde{\beta}^1(y,s) \|M\| + \widetilde{\beta}^2(y,s)
\end{align*}
Thus, by applying \Cref{lem1:gen_c2a} to $v$, there exists a polynomial $\widetilde{P}(y)=\sum_{i=1}^n \widetilde{a}_i y_i y_n$ such that 
\begin{align}\label{eq:ind1_bd}
	\|v-\widetilde{P}\|_{L^{\infty}(Q_{\eta}^+)} \leq \eta^{2+\alpha},
\end{align}
\begin{align}\label{eq:ind2_bd}
	\sum_{i=1}^n| \widetilde{a}_i| \leq C_2 ,\quad \text{and} \quad \widetilde{F}(D^2 \widetilde{P},0,0)=0.
\end{align}
We now let $P_{k+1}(x) \coloneqq P_k(x) +r^{2+\alpha} \widetilde{P}(x/r)$. Then \eqref{eq:ind2_bd} implies that
\begin{align*}
	\sum_{i=1}^n |a^{k+1}_i-a^k_i|  \leq  C_2 \eta^{k\alpha} \quad \text{and} \quad F(D^2P_{k+1},0,0) = 0.
\end{align*}
Finally, \eqref{eq:ind1_bd} shows that  
\begin{align*}
	|u(x,t)-P_{k+1}(x)| &=|u(x,t)- P_k(x)-r^{2+\alpha}\widetilde{P}(x/r) | \\
	&\leq r^{2+\alpha} |v(x/r,t/r^{2-\gamma})-\widetilde{P}(x/r)| \\
	&\leq \eta^{(k+1)(2+\alpha)}
\end{align*}
for all $(x,t) \in  Q_{\eta^{k+1}}^+$, as desired.
\end{proof}

We are now ready to prove the second main theorem, \Cref{thm:main2}.
\begin{proof}[Proof of \Cref{thm:main2}]
	Without loss of generality, we can assume that $(x_0,t_0)=(0,0)$. We claim that the assumptions in \Cref{lem2:gen_c2a}  hold after the appropriate reduction argument. Since $g\in C^{2,\alpha}(0,0)$ and $g_t(0,0)=0$, there exists a polynomial $P_g$ with $\det P_g \leq 2$ such that
	\begin{equation*}
		|g(x,t)-P_g(x)| \leq [g]_{C^{2,\alpha}(0,0)} (|x|+|t|^{\frac{1}{2-\gamma}})^{2+\alpha} \quad \text{for all }(x,t) \in \partial_p Q_1^+ \cap \{x_n=0\}.
	\end{equation*}
	
	By \Cref{thm:main}, $Du(0,0)$ is well-defined and the function $\overline{u}(x,t) \coloneqq u(x,t)-P_g(x) - u_n(0,0)x_n$ is a viscosity solution of  
	\begin{equation*} 
		\left\{\begin{aligned}
			\overline{u}_t -x_n^{\gamma} \overline{F}(D^2 \overline{u},x,t ) & =0 && \text{in } Q_1^+  \\
			\overline{u} &= \overline{g} && \text{on } \partial_p Q_1^+ \cap \{x_n=0\},
		\end{aligned}\right.
	\end{equation*}
	where $\overline{F}(M,x,t)\coloneqq F(M + D^2 P_g,x,t)$ and $\overline{g}(x,t) \coloneqq g(x,t) - P_g(x) - u_n(0,0)x_n$ which satisfies
	\begin{equation*}
		|\overline{g}(x,t)| \leq [g]_{C^{2,\alpha}(0,0)} (|x|+|t|^{\frac{1}{2-\gamma}})^{2+\alpha} \quad \text{for all }(x,t) \in \partial_p Q_1^+ \cap \{x_n=0\} .
	\end{equation*}
	
	Define 
\begin{equation*}
	\overline{\beta}^1(x,t) \coloneqq \beta^1(x,t)  \quad \text{and} \quad \overline{\beta}^2(x,t) \coloneqq  \beta^1(x,t) \|D^2 P_g\|+ \beta^2(x,t).
\end{equation*}
Then it follows that $|\overline{F}(M,x,t)- \overline{F}(M,0,0)| \leq \overline{\beta}^1(x,t) \|M\| + \overline{\beta}^2(x,t)$.
Let
	\begin{align*}
		K&\coloneqq \|\overline{u}\|_{L^{\infty}(Q_1^+)} + \delta^{-1}2^{\alpha+1}  2[\overline{\beta^2}]_{C^{\alpha}(0,0)} +\delta^{-1}2^{2+\alpha} \|\overline{g}\|_{C^{2,\alpha}(0,0)}  + \overline{F}(O_n,0,0),
	\end{align*}
by considering 
	\begin{align*}
		\widetilde{u}(x,t) \coloneqq \overline{u}(x,t)/K  \quad \text{and} \quad \widetilde{g}(x,t) \coloneqq \overline{g}(x,t)/K,
	\end{align*}
	and the scaling argument, we may assume that $\|u\|_{L^{\infty}(Q_1^+)} + |F(O_n,0,0)|\leq 1$, $u(0,0)=0$, $Du(0,0)=0$,
	\begin{align*} 
		|\beta^1(x,t)| &\leq \frac{\delta}{K2^{\alpha+1}} (|x|+|t|^{\frac{1}{2-\gamma}})^{\alpha} \quad \text{for all } (x,t) \in \overline{Q_1^+} ,   \\
		|\beta^2(x,t)| &\leq \frac{\delta}{2^{\alpha+1}} (|x|+|t|^{\frac{1}{2-\gamma}})^{\alpha} \quad \text{for all } (x,t) \in \overline{Q_1^+},  	
	\end{align*}  
and 
	\begin{equation*} 
		|g(x,t)| \leq \frac{\delta}{2^{2+\alpha}} (|x|+|t|^{\frac{1}{2-\gamma}})^{2+\alpha} \quad \text{for all }  (x,t) \in  \partial_p Q_1^+ \cap \{x_n=0\}.
	\end{equation*}
	
Thus, we can apply \Cref{lem2:gen_c2a} for $u$ to find a sequence $\{P_k\}_{k=-1}^{\infty}$ of the form $P_k(x)= \sum_{i=1}^n a_i^k x_i x_n$
such that for all $k \geq 0$, we have 
\begin{align*} 
	\|u-P_k\|_{L^{\infty}(Q_{\eta^k}^+)} \leq \eta^{k(2+\alpha)}, \quad  \sum_{i=1}^n |a^k_i-a^{k-1}_i| \leq  C_2 \eta^{(k-1)\alpha},  \quad \text{and} \quad F(D^2P_k,0,0)=0.
\end{align*}
Indeed, by standard argument, we obtain a polynomial $P(x)=\sum_{i=1}^n a_i x_i x_n$ such that $P_k \to P$ satisfying
\begin{align*}
	\sum_{i=1}^n |a^k_i-a_i| \leq C_2\eta^{k \alpha } \quad \text{and} \quad  F(D^2 P,0,0)=0.
\end{align*}
Finally, for any $(x,t) \in Q_1^+$, there exists $j \geq 0$ such that $\eta^{j+1} \leq \max\{ |x|, |t|^{\frac{1}{2-\gamma}} \}< \eta^j$. Then we conclude that
\begin{align*}
	|u(x,t)-P(x)| \leq |u(x,t)-P_j(x)|+|P_j(x) -P(x)|  \leq C(|x|+|t|^{\frac{1}{2-\gamma}})^{2+\alpha},
\end{align*}
which implies that $u$ is $C^{2, \alpha}$ at $(0,0)$.
\end{proof}

\subsection{Singular equations}
In this subsection, we prove the boundary $C^{2,\alpha}$-regularity for solutions of singular equations. That is, it is assumed that $\gamma<0$. The proof is similar to that of degenerate equations. Thus, we give a sketch of proofs and focus on applying the condition on $f$ in the proof.
\begin{theorem} \label{lem2:gen_c2a_sg}
	Let  $C_2>1$, $\delta>0$, and $\eta >0$ be as in \Cref{lem1:gen_c2a}. Suppose that  $\alpha \in(0,\overline{\alpha})$ and $u \in  C^2(Q_1^+)\cap C(\overline{Q_1^+})$ is a viscosity solution of
\begin{equation*}
	\left\{\begin{aligned}
		u_t - x_n^{\gamma} F(D^2u,x,t)&=f && \text{in } Q_1^+ \\
		u&=g && \text{on } \partial_p Q_1^+\cap \{x_n=0\}.
	\end{aligned}\right.
\end{equation*} 
with $\|u\|_{L^{\infty}(Q_1^+)} + |F(O_n,0,0)| \leq 1$, $u(0,0) =0$, $Du(0,0)=0$,
	\begin{align} 
		|\beta^1(x,t)| &\leq \frac{\delta}{K2^{\alpha+1}} (|x|+|t|^{\frac{1}{2-\gamma}})^{\alpha} \quad \text{for all } (x,t) \in \overline{Q_1^+} ,\nonumber\\
		|\beta^2(x,t)| &\leq \frac{\delta}{2^{\alpha+1}} (|x|+|t|^{\frac{1}{2-\gamma}})^{\alpha} \quad \text{for all } (x,t) \in \overline{Q_1^+}, \nonumber \\
		|f(x,t)| &\leq \frac{\delta}{2^{\alpha+\gamma}} (|x|+|t|^{\frac{1}{2-\gamma}})^{\alpha+\gamma} \quad \text{for all } (x,t) \in \overline{Q_1^+},  \label{f_hol_sg}
	\end{align}  
and
	\begin{align*}  
		|g(x,t)| &\leq \frac{\delta}{2^{2+\alpha}} (|x|+|t|^{\frac{1}{2-\gamma}})^{2+\alpha} \quad \text{for all } (x,t) \in \partial_p Q_1^+\cap \{x_n=0\},
	\end{align*}
	where $K>1$ is a constant depending only on $n$, $\lambda$, $\Lambda$, $\gamma$, and $\alpha$. Then there exists a sequence $\{P_k\}_{k=-1}^{\infty}$ of the form $P_k(x)= \sum_{i=1}^n a_i^k x_i x_n$
such that for all $k \geq 0$, we have 
\begin{align*}
	\|u-P_k\|_{L^{\infty}(Q_{\eta^k}^+)} \leq \eta^{k(2+\alpha)}, \quad  \sum_{i=1}^n |a^k_i-a^{k-1}_i| \leq  C_2 \eta^{(k-1)\alpha},  \quad \text{and} \quad F(D^2P_k,0,0)=0.
\end{align*}
\end{theorem}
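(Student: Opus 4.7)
The plan is to follow the induction structure of \Cref{lem2:gen_c2a} essentially verbatim, with the sole new ingredient being a scaling estimate for the forcing term $f$ whose H\"older exponent in \eqref{f_hol_sg} was tuned precisely for this purpose.

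I argue by induction on $k$, setting $P_{-1} = P_0 \equiv 0$ as the trivial base case. Assuming the conclusion holds at level $k$, with $r = \eta^k$, $y = x/r$, $s = t/r^{2-\gamma}$, I introduce
$$v(y,s) := \frac{u(x,t) - P_k(x)}{r^{2+\alpha}}.$$
A direct computation shows $v \in C^2(Q_1^+) \cap C(\overline{Q_1^+})$ is a viscosity solution of $v_s - y_n^{\gamma} \widetilde{F}(D^2 v, y, s) = \widetilde{f}$ in $Q_1^+$ with boundary data $\widetilde{g}$, where
$$\widetilde{F}(M,y,s) := r^{-\alpha} F(r^{\alpha} M + D^2 P_k, x, t), \quad \widetilde{f}(y,s) := r^{-\alpha-\gamma} f(x,t), \quad \widetilde{g}(y,s) := \frac{g(x,t) - P_k(x)}{r^{2+\alpha}}.$$

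Next, I verify the hypotheses of \Cref{lem1:gen_c2a} for $v$. The induction hypothesis immediately yields $\|v\|_{L^{\infty}(Q_1^+)} \leq 1$, $v(0,0) = 0$, $Dv(0,0) = 0$, and $\widetilde{F}(O_n,0,0) = r^{-\alpha} F(D^2 P_k, 0, 0) = 0$. Setting $\widetilde{\beta}^1(y,s) := \beta^1(x,t)$ and $\widetilde{\beta}^2(y,s) := r^{-\alpha}[K \beta^1(x,t) + \beta^2(x,t)]$, where $K$ uniformly bounds $\|D^2 P_k\|$ via telescoping of the second inductive estimate, the H\"older hypotheses on $\beta^1,\beta^2$ propagate to $\|\widetilde{\beta}^i\|_{L^{\infty}(Q_1^+)} \leq \delta$ together with the structural inequality for $\widetilde{F}$, exactly as in the proof of \Cref{lem2:gen_c2a}. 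The boundary datum $\widetilde{g}$ inherits the $C^{1,\alpha}$-type control by $\delta$ in the same manner.

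The essential new check---and the main, though not technically deep, obstacle---is $\|\widetilde{f}\|_{L^{\infty}(Q_1^+)} \leq \delta$. Using \eqref{f_hol_sg},
$$|\widetilde{f}(y,s)| \leq r^{-\alpha-\gamma} \cdot \frac{\delta}{2^{\alpha+\gamma}} (r|y| + r|s|^{\frac{1}{2-\gamma}})^{\alpha+\gamma} = \frac{\delta}{2^{\alpha+\gamma}} (|y| + |s|^{\frac{1}{2-\gamma}})^{\alpha+\gamma} \leq \delta$$
on $Q_1^+$. The choice of exponent $\alpha+\gamma$ in \eqref{f_hol_sg} is precisely what makes the prefactor $r^{-\alpha-\gamma}$ cancel the $r^{\alpha+\gamma}$ arising from rescaling the base of the power, producing a scale-invariant bound; this also explains why in the singular regime $\gamma \leq 0$ one may impose less than full $C^{\alpha}$ control on $f$. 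Having verified all hypotheses, \Cref{lem1:gen_c2a} produces $\widetilde{P}(y) = \sum_{i=1}^n \widetilde{a}_i y_i y_n$ with $\sum_i |\widetilde{a}_i| \leq C_2$, $\widetilde{F}(D^2 \widetilde{P}, 0, 0) = 0$, and $\|v - \widetilde{P}\|_{L^{\infty}(Q_{\eta}^+)} \leq \eta^{2+\alpha}$. Setting $P_{k+1}(x) := P_k(x) + r^{2+\alpha}\widetilde{P}(x/r)$, the three inductive conditions at level $k+1$ follow by unscaling, which closes the induction.
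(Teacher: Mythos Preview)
Your proof is correct and follows essentially the same route as the paper: an induction identical to \Cref{lem2:gen_c2a}, with the single additional verification that the rescaled forcing $\widetilde{f}(y,s)=r^{-\alpha-\gamma}f(x,t)$ satisfies $\|\widetilde{f}\|_{L^{\infty}(Q_1^+)}\leq\delta$ via the exponent $\alpha+\gamma$ in \eqref{f_hol_sg}. The paper presents exactly this argument, merely stating the $\widetilde{f}$ bound and referring back to \Cref{lem2:gen_c2a} for the rest.
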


\begin{proof}
The difference from the proof of \Cref{lem2:gen_c2a} is that the function $v \in  C^2(Q_1^+)\cap C(\overline{Q_1^+})$ is a viscosity solution of
	\begin{equation*}
		\left\{\begin{aligned}
			v_s - y_n^{\gamma} \widetilde{F}(D^2 v,y,s)& =  \widetilde{f} && \text{in } Q_1^+ \\
			v &= \widetilde{g} && \text{on } \partial_p Q_1^+\cap \{y_n=0\},
		\end{aligned}\right.
	\end{equation*}
where
\begin{equation*}
	\widetilde{F}(M,y,s)\coloneqq r^{-\alpha}F(r^\alpha M + D^2 P_k,x,t), \quad \widetilde{f}(y,s)\coloneqq \frac{f(x,t)}{r^{\alpha+\gamma}}, \quad \text{and} \quad  \widetilde{g}(y,s)\coloneqq \frac{g(x,t)-P_k(x)}{r^{2+\alpha}}.
\end{equation*}
Also, from \eqref{f_hol_sg}, the following inequality holds:
\begin{align*}
	 |\widetilde{f}(y,s)| & \leq \frac{1}{r^{\alpha+\gamma}}\cdot \frac{\delta}{2^{\alpha+\gamma}} \cdot 2^{\alpha+\gamma}r^{\alpha+\gamma}  = \delta \quad \text{for all } (y,s) \in \overline{Q_1^+} .
\end{align*}
Now if we apply \Cref{lem1:gen_c2a} to $v$, the remaining proof is exactly the same as \Cref{lem2:gen_c2a}.
\end{proof}

We are now ready to prove the third main theorem, \Cref{thm:main2_sg}.
\begin{proof}[Proof of \Cref{thm:main2_sg}]
	The difference from the proof of \Cref{thm:main2} is that the polynomial $P_g$ depends on $t$ as well as $x$ and the function $\overline{u}(x,t) \coloneqq u(x,t)-P_g(x,t) - u_n(0,0)x_n$ is a viscosity solution of  
	\begin{equation*} 
		\left\{\begin{aligned}
			\overline{u}_t -x_n^{\gamma} \overline{F}(D^2 \overline{u},x,t ) & =\overline{f} && \text{in } Q_1^+  \\
			\overline{u} &= \overline{g} && \text{on } \partial_p Q_1^+ \cap \{x_n=0\},
		\end{aligned}\right.
	\end{equation*}
	where $\overline{F}(M,x,t)\coloneqq F(M + D^2 P_g,x,t) + x_n^{-\gamma}(f(0,0)-\partial_t P_g)$, $\overline{f}(x,t) \coloneqq f(x,t) - f(0,0)$, and $\overline{g}(x,t) \coloneqq g(x,t) - P_g(x,t) - u_n(0,0)x_n$ which satisfies
	\begin{align*}
		|\overline{f}(x,t)| &\leq \kappa (|x|+|t|^{\frac{1}{2-\gamma}})^{\alpha+\gamma} \quad \text{for all }(x,t) \in Q_1^+,\\
		|\overline{g}(x,t)| &\leq [g]_{C^{2,\alpha}(0,0)} (|x|+|t|^{\frac{1}{2-\gamma}})^{2+\alpha} \quad \text{for all }(x,t) \in \partial_p Q_1^+ \cap \{x_n=0\} .
	\end{align*}
	
	Define 
\begin{equation*}
	\overline{\beta}^1(x,t) \coloneqq \beta^1(x,t)  \quad \text{and} \quad \overline{\beta}^2(x,t) \coloneqq  \beta^1(x,t) \|D^2 P_g\|+ \beta^2(x,t) + x_n^{-\gamma}(f(0,0)-\partial_t P_g).
\end{equation*}
Then it follows that $|\overline{F}(M,x,t)- \overline{F}(M,0,0)| \leq \overline{\beta}^1(x,t) \|M\| + \overline{\beta}^2(x,t)$.
Let
	\begin{align*}
		K&\coloneqq \|\overline{u}\|_{L^{\infty}(Q_1^+)} + \delta^{-1}2^{\alpha^*+1}  2[\overline{\beta^2}]_{C^{\alpha^*}(0,0)} +\delta^{-1}2^{2+\alpha} \|\overline{g}\|_{C^{2,\alpha}(0,0)}  + \overline{F}(O_n,0,0) + \delta^{-1} 2^{\alpha+\gamma}\kappa,
	\end{align*}
by considering 
	\begin{align*}
		\widetilde{u}(x,t) \coloneqq \overline{u}(x,t)/K  \quad \text{and} \quad \widetilde{g}(x,t) \coloneqq \overline{g}(x,t)/K,
	\end{align*}
	and the scaling argument, we may assume that $\|u\|_{L^{\infty}(Q_1^+)} + |F(O_n,0,0)|\leq 1$, $u(0,0)=0$, $Du(0,0)=0$,
	\begin{align*} 
		|\beta^1(x,t)| &\leq \frac{\delta}{K2^{\alpha+1}} (|x|+|t|^{\frac{1}{2-\gamma}})^{\alpha} \quad \text{for all } (x,t) \in \overline{Q_1^+} ,   \\
		|\beta^2(x,t)| &\leq \frac{\delta}{2^{\alpha^*+1}} (|x|+|t|^{\frac{1}{2-\gamma}})^{\alpha^*} \quad \text{for all } (x,t) \in \overline{Q_1^+},  	\\
		|f(x,t)| &\leq \frac{\delta}{2^{\alpha+\gamma}} (|x|+|t|^{\frac{1}{2-\gamma}})^{\alpha+\gamma} \quad \text{for all } (x,t) \in \overline{Q_1^+},
	\end{align*}  
and 
	\begin{equation*} 
		|g(x,t)| \leq \frac{\delta}{2^{2+\alpha}} (|x|+|t|^{\frac{1}{2-\gamma}})^{2+\alpha} \quad \text{for all }  (x,t) \in  \partial_p Q_1^+ \cap \{x_n=0\}.
	\end{equation*}
	
Since $\alpha^* \leq \alpha$, if we apply \Cref{lem2:gen_c2a_sg} for $u$ and $\alpha^*$, we obtain a polynomial $\overline{P}(x)=\sum_{i=1}^n a_i x_i x_n$ such that $\overline{F}(D^2 \overline{P},0,0)=0$. Then $P(x,t) \coloneqq \overline{P}(x) + P_g(x,t) +u_n(0,0) x_n$ satisfies 
\begin{equation*}
	F(D^2 P,0,0) = 0, \quad \partial_t P =\partial_t P_g, \quad \text{and} \quad P(x',0,t) \equiv P_g(x,0,t).
\end{equation*}
The remaining proof is exactly the same as \Cref{thm:main2}.
\end{proof}

Finally, we prove the solvability of the Cauchy-Dirichlet problem, \Cref{thm:solvability} and \Cref{thm:solvability_s}.

\begin{proof}[Proof of \Cref{thm:solvability}]
For each $(x_0,t_0) \in \overline{\Omega^+_T}$, we just need to find a polynomial $P(x,t)$ with $\deg p \leq 2$ that satisfies $\|P\|_{C^2(\overline{\Omega_T^+}) } \leq C$ and
\begin{equation*}
	|u(x,t)-P(x,t)| \leq C(|x-x_0| + |t-t_0|^{1/2})^{2+\alpha} \quad \mbox{for all } (x,t) \in \Omega^+_T.
\end{equation*}
If $(x_0,t_0) \in \partial_p \Omega_T^+$, then our goal $P$ is just itself from \Cref{thm:main2} or \cite[Theorem 1.18]{LZ22}. It is enough to consider only the interior point $(x_0,t_0) \in \Omega_T^+$. Let $(\tilde{x}_0,\tilde{t}_0) \in \partial_p \Omega_T^+$ such that 
\begin{equation*}
	r\coloneqq \textnormal{dist}(\partial_p \Omega_T^+,(x_0,t_0)) = |x_0-\tilde{x}_0| +  |t_0-\tilde{t}_0|^{1/2}.
\end{equation*}
By the boundary $C^{2,\alpha_1}$-estimate (\Cref{thm:main2},  \cite[Theorem 1.18]{LZ22}), there exist a polynomial $P_1$ such that $\deg P_1 \leq 2$ and 
\begin{equation*}
	\left\{\begin{aligned}
		\partial_t P_1- \tilde{x}_{0n}^{\gamma} F(D^2P_1,\tilde{x}_0,\tilde{t}_0)&=0 && \text{if } \tilde{x}_{0n} >0 \\
		F(D^2P_1,\tilde{x}_0,\tilde{t}_0)&=0 && \text{if } \tilde{x}_{0n} =0,
	\end{aligned}\right.
\end{equation*}
and a constant $A>0$ such that 
\begin{equation*}
		|DP_1(x_0,t_0)| + \|D^2P_1\| +|\partial_t P_1| \leq A
	\end{equation*}
and
\begin{equation} \label{u-p1}
	|u(x,t)-P_1(x,t)|\leq A(|x-\tilde{x}_0|+ |t-\tilde{t}_0|^{1/2} )^{2+\alpha_1}
\end{equation}
for all $(x,t) \in \Omega_T^+ \cap Q_1^+(\tilde{x}_0,\tilde{t}_0)$ which is extensible throughout $\Omega_T^+$. Note that the function $v=u-P_1$ is a viscosity solution of 
\begin{equation*}
	v_t - x_n^{\gamma} \overline{F}(D^2 v,x,t) =0 \quad \text{in } \Sigma,
\end{equation*}
where $\overline{F} (M,x,t) = F(M +D^2P_1,x,t)$ and $\Sigma=\{(x,t):|x-x_0|+ |t-t_0|^{1/2} < r/2 \}$. Since there exists $\theta \in \mathbb{R}$ such that $\overline{F}(\theta I_n, x_0,t_0) =0$ and the function $w(x,t) =v(x,t) -\theta |x|^2/2$ satisfy $w_t -x_n^{\gamma} \overline{F}(D^2 w+ \theta I_n,x,t) = 0$, we may assume that $\overline{F}(O_n,x_0,t_0)=0$.

Since $C^{\alpha}(\overline{\Sigma}) \subset H^{\alpha}(\overline{\Sigma})$ and by applying the interior $C^{2,\alpha}$-estimate for uniformly parabolic equations to $v$ (see \cite[Theorem 1.1]{Wan92b}, \cite[Theorem 8.1]{CC95}), we have there exists a polynomial $P_2$ with $\deg P_2 \leq 2$ such that 
\begin{equation} \label{norm_p2}
	r |DP_2(x_0,t_0)| + r^2\|D^2P_2\| +r^2 |\partial_t P_2 | \leq B \left( \|v\|_{L^{\infty}(\Sigma)} + r^2 \| \overline{\beta}^2\|_{H^{\alpha}(\overline{\Sigma})} \right)
\end{equation}
and
\begin{equation} \label{v_int} 
	|v(x,t)-P_2(x,t)|  \leq B \bigg( \frac{ \|v\|_{L^{\infty}(\Sigma)}}{r^{2+\alpha}} + \frac{ \| \overline{\beta}^2\|_{H^{\alpha}(\overline{\Sigma})} }{r^{\alpha}} \bigg) (|x-x_0| + |t-t_0|^{1/2})^{2+\alpha} 
\end{equation}
for all $(x,t) \in \Sigma$, where $B>0$ is a constant depending only on $n$, $\lambda$, $\Lambda$, $\gamma$, $\alpha$, and $\|\beta^1\|_{H^{\alpha}(\overline{\Omega_T^+})}$ and $\overline{\beta}^2 \coloneqq  \beta^1 \|D^2P_1\| + \beta^2$.
From \eqref{u-p1}, we see
\begin{align} 
	|v(x,t)| &\leq A (|x-\tilde{x}_0|+|t-\tilde{t}_0|^{1/2} )^{2+\alpha_1} \leq A \big(|x-x_0|+|t-t_0|^{1/2} +r \big)^{2+\alpha} \nonumber \\
	& \leq AC r^{2+\alpha} \quad \text{for all }(x,t) \in \Sigma  \label{est_v}
\end{align}
and we also see
\begin{equation} \label{est_vt-Lv}
	|\overline{\beta}^2(x,t)| \leq C r^{\alpha} \quad \text{for all } (x,t) \in \overline{\Sigma}.
\end{equation}
Thus, combining \eqref{norm_p2}, \eqref{est_v}, and \eqref{est_vt-Lv} gives  
\begin{equation} \label{norm_pk2'}  	
	|P_2(x_0,t_0)|+ r |DP_2(x_0,t_0)| + r^2\|D^2P_2\| +r^2 |\partial_t P_2 | \leq ABC r^{2+\alpha}.
\end{equation}
Furthermore, combining \eqref{v_int}, \eqref{est_v}, and \eqref{est_vt-Lv} gives  
\begin{align*}
	|u(x,t)-P_1(x,t)-P_2(x,t)| &\leq ABC (|x-x_0| + |t-t_0|^{1/2})^{2+\alpha} \quad \text{for all } (x,t) \in \Sigma. 
\end{align*}

On the other hand, combining \eqref{u-p1} and \eqref{norm_pk2'} gives
\begin{align*}
	&|u(x,t)-P_1(x,t)-P_2(x,t)| \\
	&\quad\leq |u(x,t)-P_1(x,t)| + |P_2(x,t)| \\
	&\quad\leq A(|x-\tilde{x}_0|+ |t-\tilde{t}_0|^{1/2} )^{2+\alpha} +|P_2(x_0,t_0)|  +|DP_2 (x_0,t_0)| |x-x_0|  \\
	&\qquad\qquad\qquad\qquad\qquad\qquad\qquad\qquad +\frac{1}{2} \|D^2P_2\| |x-x_0|^2 + |\partial_t P_2| |t-t_0|\\
	&\quad\leq A(|x-x_0|+ |t-t_0|^{1/2} )^{2+\alpha} + ABCr^{2+\alpha}+ ABCr^{1+\alpha} |x-x_0|\\
	&\qquad\qquad\qquad\qquad\qquad\qquad\qquad\qquad+ \frac{1}{2}ABCr^{\alpha}|x-x_0|^2 + ABCr^{\alpha} |t-t_0|\\
	&\quad\leq  ABC(|x-x_0|+ |t-t_0|^{1/2} )^{2+\alpha} 
\end{align*}
for all $(x,t) \in \Omega_T^+ $ with $|x-x_0|+ |t-t_0|^{1/2} \ge r/2$. That is, $P \coloneqq P_1+P_2$ is the desired polynomial.
\end{proof}

\begin{proof}[Proof of \Cref{thm:solvability_s}]
For each $(x_0,t_0) \in \overline{\Omega^+_T}$, we just need to find a polynomial $P(x,t)$ with $\deg p \leq 2$ that satisfies $\|P\|_{C^2(\overline{\Omega_T^+}) } \leq C$ and
\begin{equation*}
	|u(x,t)-P(x,t)| \leq C(|x-x_0| + |t-t_0|^{\frac{1}{2-\gamma}})^{2+\alpha} \quad \mbox{for all } (x,t) \in \Omega^+_T.
\end{equation*}
If $(x_0,t_0) \in \partial_p \Omega_T^+$, then our goal $P$ is just itself from \Cref{thm:main2_sg} or \cite[Theorem 1.18]{LZ22}. It is enough to consider only the interior point $(x_0,t_0) \in \Omega_T^+$. Let $(\tilde{x}_0,\tilde{t}_0) \in \partial_p \Omega_T^+$ such that 
\begin{equation*}
	r\coloneqq \textnormal{dist}(\partial_p \Omega_T^+,(x_0,t_0)) = |x_0-\tilde{x}_0| +  |t_0-\tilde{t}_0|^{\frac{1}{2-\gamma}}.
\end{equation*}
Since $H^{\alpha}(\overline{\Omega^+_T}) \subset C^{\alpha}(\overline{\Omega^+_T})$ and by the boundary $C^{2,\alpha_1}$-estimate (\Cref{thm:main2_sg},  \cite[Theorem 1.18]{LZ22}), there exist a polynomial $P_1$ such that $\deg P_1 \leq 2$ and 
\begin{equation*}
	\left\{\begin{aligned}
		&\partial_t P_1- \tilde{x}_{0n}^{\gamma} F(D^2P_1,\tilde{x}_0,\tilde{t}_0) =f(\tilde{x}_0,\tilde{t}_0) && \text{if } \tilde{x}_{0n} >0 \\
		&F(D^2P_1,\tilde{x}_0,\tilde{t}_0) =0, \quad \partial_t P_1 = \partial_t P_g && \text{if } \tilde{x}_{0n} =0,
	\end{aligned}\right.
\end{equation*}
and a constant $A>0$ such that 
\begin{equation*}
		|DP_1(x_0,t_0)| + \|D^2P_1\| +|\partial_t P_1| \leq A
	\end{equation*}
and
\begin{equation} \label{u-p1_sg}
	|u(x,t)-P_1(x,t)|\leq A(|x-\tilde{x}_0|+ |t-\tilde{t}_0|^{\frac{1}{2-\gamma}} )^{2+\alpha^*}
\end{equation}
for all $(x,t) \in \Omega_T^+ \cap Q_1^+(\tilde{x}_0,\tilde{t}_0)$ which is extensible throughout $\Omega_T^+$. Note that the function $v=u-P_1$ is a viscosity solution of 
\begin{equation*}
	v_t - x_n^{\gamma} \overline{F}(D^2 v,x,t) = \overline{f} \quad \text{in } \Sigma,
\end{equation*}
where $\overline{F} (M,x,t) = F(M +D^2P_1,x,t) + x_n^{-\gamma}(f(x_0,t_0)-\partial_t P_1)$, $\overline{f}(x,t) = f(x,t) - f(x_0,t_0)$, and $\Sigma=\{(x,t):|x-x_0|+ |t-t_0|^{\frac{1}{2-\gamma}} < r/2 \}$.

 Since there exists $\theta \in \mathbb{R}$ such that $ \overline{F}(\theta I_n, x_0,t_0) = 0$ and the function $w(x,t) =v(x,t) -\theta |x|^2/2$ satisfy $w_t -x_n^{\gamma} \overline{F}(D^2 w+ \theta I_n,x,t) = \overline{f}$, we may assume that $\overline{F}(O_n,x_0,t_0)=0$.

By applying the interior $C^{2,\alpha}$-estimate for uniformly parabolic equations to $v$ (see \cite[Theorem 1.1]{Wan92b}, \cite[Theorem 8.1]{CC95}), we have there exists a polynomial $P_2$ with $\deg P_2 \leq 2$ such that 
\begin{align} \label{norm_p2_sg}
	& r |DP_2(x_0,t_0)| + r^2\|D^2P_2\| +r^2 |\partial_t P_2 |  \leq B \left( \|v\|_{L^{\infty}(\Sigma)} + r^2 \| \overline{f}\|_{H^{\alpha}(\overline{\Sigma})}  + r^2 \| \overline{\beta}^2\|_{H^{\alpha}(\overline{\Sigma})} \right)
\end{align}
and
\begin{align} \label{v_int_sg} 
	|v(x,t)-P_2(x,t)| &\leq B \bigg( \frac{ \|v\|_{L^{\infty}(\Sigma)}}{r^{2+\alpha}} + \frac{ \| \overline{f}\|_{H^{\alpha}(\overline{\Sigma})} + \| \overline{\beta}^2\|_{H^{\alpha}(\overline{\Sigma})}  }{r^{\alpha}}  \bigg) (|x-x_0| + |t-t_0|^{\frac{1}{2-\gamma}})^{2+\alpha} 
\end{align}
for all $(x,t) \in \Sigma$, where $B>0$ is a constant depending only on $n$, $\lambda$, $\Lambda$, $\gamma$, $\alpha$, and $\|\beta^1\|_{C^{\alpha}(\overline{\Omega_T^+})}$ and $\overline{\beta}^2 \coloneqq  \beta^1 \|D^2P_1\| + \beta^2$.
From \eqref{u-p1_sg}, we see
\begin{align} 
	|v(x,t)| &\leq A (|x-\tilde{x}_0|+|t-\tilde{t}_0|^{\frac{1}{2-\gamma}} )^{2+\alpha^*} \leq A \big(|x-x_0|+|t-t_0|^{\frac{1}{2-\gamma}} +r \big)^{2+\alpha} \nonumber \\
	& \leq AC r^{2+\alpha} \quad \text{for all }(x,t) \in \Sigma \label{est_v_sg}
\end{align}
and we also see
\begin{equation} \label{est_vt-Lv_sg}
\left\{\begin{aligned}
	|\overline{f}(x,t)| &\leq C r^{\alpha} \quad \text{for all } (x,t) \in \overline{\Sigma}\\
	|\overline{\beta}^2(x,t)| &\leq C r^{\alpha} \quad \text{for all } (x,t) \in \overline{\Sigma}.
\end{aligned} \right.
\end{equation}
Thus, combining \eqref{norm_p2_sg}, \eqref{est_v_sg}, and \eqref{est_vt-Lv_sg} gives  
\begin{equation} \label{norm_pk2'_sg}  	
	|P_2(x_0,t_0)|+ r |DP_2(x_0,t_0)| + r^2\|D^2P_2\| +r^2 |\partial_t P_2 | \leq ABC r^{2+\alpha}.
\end{equation}
Furthermore, combining \eqref{v_int_sg}, \eqref{est_v_sg}, and \eqref{est_vt-Lv_sg} gives  
\begin{align*}
	|u(x,t)-P_1(x,t)-P_2(x,t)| &\leq ABC (|x-x_0| + |t-t_0|^{\frac{1}{2-\gamma}})^{2+\alpha} \quad \text{for all } (x,t) \in\Sigma. 
\end{align*}

On the other hand, combining \eqref{u-p1_sg} and \eqref{norm_pk2'_sg} gives
\begin{align*}
	&|u(x,t)-P_1(x,t)-P_2(x,t)| \\
	&\quad\leq |u(x,t)-P_1(x,t)| + |P_2(x,t)| \\
	&\quad\leq A(|x-\tilde{x}_0|+ |t-\tilde{t}_0|^{\frac{1}{2-\gamma}} )^{2+\alpha^*} +|P_2(x_0,t_0)|  +|DP_2 (x_0,t_0)| |x-x_0|  \\
	&\qquad\qquad\qquad\qquad\qquad\qquad\qquad\qquad +\frac{1}{2} \|D^2P_2\| |x-x_0|^2 + |\partial_t P_2| |t-t_0|\\
	&\quad\leq A(|x-x_0|+ |t-t_0|^{\frac{1}{2-\gamma}} )^{2+\alpha} + ABCr^{2+\alpha}+ ABCr^{1+\alpha} |x-x_0|\\
	&\qquad\qquad\qquad\qquad\qquad\qquad\qquad\qquad+ \frac{1}{2}ABCr^{\alpha}|x-x_0|^2 + ABCr^{\alpha} |t-t_0|\\
	&\quad\leq  ABC(|x-x_0|+ |t-t_0|^{\frac{1}{2-\gamma}} )^{2+\alpha} 
\end{align*}
for all $(x,t) \in \Omega_T^+ $ with $|x-x_0|+ |t-t_0|^{\frac{1}{2-\gamma}} \ge r/2$. That is, $P \coloneqq P_1+P_2$ is the desired polynomial.
\end{proof}

%
%

\end{document}